\theoremstyle{plain}
\newtheorem{thm}{Theorem}[section]
\newtheorem{prop}[thm]{Proposition}
\newtheorem{cor}[thm]{Corollary}
\newtheorem{lem}[thm]{Lemma}
\theoremstyle{definition}
\newtheorem{defn}[thm]{Definition}
\newtheorem{rmk}[thm]{Remark}
\newtheorem{ex}[thm]{Example}
\algrenewcommand\algorithmicrequire{\textbf{Input:}}
\algrenewcommand\algorithmicensure{\textbf{Output:}}
\newcommand*\bigcdot{\mathpalette\bigcdot@{.5}}
\newcommand*\bigcdot@[2]{\mathbin{\vcenter{\hbox{\scalebox{#2}{$\m@th#1\bullet$}}}}}
\newcommand{%
    \def\svgwidth{\columnwidth}
    \import{./Figures/}{.pdf_tex}
}[1]{%
    \def\svgwidth{\columnwidth}
    \import{./Figures/}{#1.pdf_tex}
}
\newcommand\blfootnote[1]{%
  \begingroup
  \renewcommand\thefootnote{}\footnote{#1}%
  \addtocounter{footnote}{-1}%
  \endgroup
}
\def\@makechapterhead#1{%
  \vspace*{10\p@}%
  {\parindent \z@ \raggedright 
    \interlinepenalty\@M
    \Huge \bfseries 
    \thechapter \space\space #1\par\nobreak
    \vskip 30\p@
  }}
\def\@makeschapterhead#1{%
  \vspace*{10\p@}%
  {\parindent \z@ \raggedright
    \sffamily
    \interlinepenalty\@M
    \Huge \bfseries  
    #1\par\nobreak
    \vskip 30\p@
  }}
\def\Beginboxit
\def\Endboxit{%
			      \kern1.2pt
		       \egroup
		  \kern1.2pt\vrule
		\egroup
	   \hrule
	 \egroup
   }
\newenvironment{boxit*}{\Beginboxit\hbox to\hsize{}}{\Endboxit}
\newcounter{elimination@steps}
\newcolumntype{R}[1]{>{\raggedleft\arraybackslash$}p{#1}<{$}}
\def\elimination@num@rights{}
\def\elimination@num@variables{}
\def\elimination@col@width{}
\newcommand{\eliminationstep}[2]
{
    \ifnum\value{elimination@steps}>0\leadsto\quad\fi
    \left[
        \ifnum\elimination@num@rights>0
            \begin{array}
            {@{}*{\elimination@num@variables}{R{\elimination@col@width}}
            |@{}*{\elimination@num@rights}{R{\elimination@col@width}}}
        \else
            \begin{array}
            {@{}*{\elimination@num@variables}{R{\elimination@col@width}}}
        \fi
            #1
        \end{array}
    \right]
    & 
    \begin{array}{l}
        #2
    \end{array}
    &
    \addtocounter{elimination@steps}{1}
}
\def\colvec#1{\expandafter\colvec@i#1,,,,,,,,,\@nil}
\def\colvec@i#1,#2,#3,#4,#5,#6,#7,#8,#9\@nil{%
  \ifx$#2$ \begin{bmatrix}#1\end{bmatrix} \else
    \ifx$#3$ \begin{bmatrix}#1\\#2\end{bmatrix} \else
      \ifx$#4$ \begin{bmatrix}#1\\#2\\#3\end{bmatrix}\else
        \ifx$#5$ \begin{bmatrix}#1\\#2\\#3\\#4\end{bmatrix}\else
          \ifx$#6$ \begin{bmatrix}#1\\#2\\#3\\#4\\#5\end{bmatrix}\else
            \ifx$#7$ \begin{bmatrix}#1\\#2\\#3\\#4\\#5\\#6\end{bmatrix}\else
              \ifx$#8$ \begin{bmatrix}#1\\#2\\#3\\#4\\#5\\#6\\#7\end{bmatrix}\else
                 \PackageError{Column Vector}{The vector you tried to write is too big, use bmatrix instead}{Try using the bmatrix environment}
              \fi
            \fi
          \fi
        \fi
      \fi
    \fi
  \fi 
}  
\newcommand{\R}[0]{\mathds{R}} 
\newcommand{\Z}[0]{\mathds{Z}} 
\newcommand{\N}[0]{\mathds{N}} 
\newcommand{\Q}[0]{\mathds{Q}} 
\newcommand{\C}[0]{\mathds{C}} 
\newcommand{\C}[0]{\mathds{C}} 
\renewcommand{\d}[0]{\mathrm{d}} 
\definecolor{darkgreen}{rgb}{0,0.6,0}
\newcommand{\blue}[1]{{\color{blue}#1}}
\renewcommand{\emph}[1]{\blue{\bf{#1}}}
\title{Galois orbits of torsion points over polytopes near atoral sets}
\author{Chenying Lin}
\begin{document}

\maketitle

\begin{abstract}
    Given an essentially atoral Laurent polynomial $P$, we show an equidistribution theorem for the function $\operatorname{log}|P|$ on specific subsets of Galois orbits of torsion points of the $d$-dimensional algebraic torus $\mathbb{G}^d_m(\overline{\Q})$. The specific subsets under consideration are the preimages of $d$-dimensional polytopes within the hypercube $[0,1]^d$ under the cotropicalization map. This generalises an equidistribution theorem of V. Dimitrov and P. Habegger, who considered  only all Galois orbits that correspond to the entire hypercube $[0,1]^d$. In addition, we provide an estimate for the convergence speed of this equidistribution, expressed as a negative power of the strictness degree. Our approach is to derive an alternative version of Koksma's inequality over polytopes. 
    
    As an application, we provide the convergence speed of heights on a sequence of projective points for a specific two-dimensional example, answering a question posed by R. Gualdi and M. Sombra. In the appendix, we present an algorithm to compute the explicit value of the power of the strictness degree.
\end{abstract}

\blfootnote{
2020 \textit{Mathematics Subject Classification}. Primary 11J83, 11G50; Secondary 14G40, 37P30, 52B11.

\ \ \textit{Key words and phrases}. Galois equidistribution, torsion points, discrepancy, Koksma's inequality, heights in projective space, polytopes

\ \ The author is supported by the collaborative research center SFB 1085 \textit{Higher Invariants} funded by the DFG and Project PID2023-147642NB-100 of the Universitat de Barcelona.
}

\setcounter{tocdepth}{1}

\tableofcontents

\newpage

\section{Introduction}

Let $\overline{\Q}$ be a fixed algebraic closure of $\Q$. Let $d\geq 1$ be an integer and let $\mathbb{G}_m^d$ denote the $d$-dimensional multiplicative group variety over $\overline{\Q}$, where $\mathbb{G}_m$ is isomorphic to the group variety $\operatorname{Spec}\overline{\Q}[T^{\pm 1}]$. Let $\iota:\overline{\Q}\hookrightarrow\C$ be a fixed embedding; let also denote by $\iota$ the induced map $(\overline{\Q}^*)^d\hookrightarrow(\C^*)^d$. We identify
\begin{itemize}

    \item $\mathbb{G}_m^d$ with its $\overline{\Q}$-points $\mathbb{G}_m^d(\overline{\Q})=(\overline{\Q}^*)^d$,
    \item $\iota(x)$ with $x$ for every $x\in\overline{\Q}$ and $\iota(\omega)$ with $\omega$ for every $\omega\in\mathbb{G}_m^d$.
\end{itemize}

Let $\omega\in\mathbb{G}_m^d$ be a torsion point. We define the \textit{strictness degree} of $\omega$ to be
\[
\delta(\omega)\coloneqq\operatorname{inf}\big\{ |a|:a\in\Z^d\backslash\{0\}\text{, }\omega^a=1 \big\},
\]
where $\omega^a=\omega_1^{a_1}\cdot\cdots\cdot\omega_d^{a_d}$ for $\omega=(\omega_1,\ldots,\omega_d)$ and $a=(a_1,\ldots,a_d)$, and $|\cdot|$ is the maximum norm. By \cite[Chapter 3]{Bombieri_Gubler_2006}, an algebraic subgroup of $\mathbb{G}_m^d$ of dimension $d-1$ takes the form $H_a=\{\omega\in\mathbb{G}_m^d:\omega^a=1\}$
for an $a\in\Z^d\backslash\{0\}$. Moreover, every proper algebraic subgroup of $\mathbb{G}_m^d$ is included in $H_a$ for some $a$. Notice that $\omega$ is not contained in $H_a$ for finitely many $a$ with $|a|<\delta(\omega)$. Hence, when $\delta(\omega)$ becomes larger, in this way we can easily find more and more algebraic subgroups of $\mathbb{G}_m^d$ that does not contain the Galois orbit of $\omega$.

The well-known Bilu's equidistribution theorem for the Galois orbits of points of small heights \cite{bilu1997limit} implies that when $\delta(\omega)$ becomes arbitrarily large, the average of the Dirac measure at $\omega^\sigma$ over $\sigma\in\operatorname{Gal}(\Q(\omega)/\Q)$ weakly converges to the normalized Haar measure on $(S^1)^d\coloneqq\{(z_1,\ldots,z_d)\in(\C^*)^d:|z_1|=\ldots=|z_d|=1\}$. In other words, for a continuous function $f:(\C^*)^d\to\R$ with compact support, we have
\begin{equation}\label{eq: Bilu's thm}
    \frac{1}{[\Q(\omega):\Q]}\sum_{\sigma\in\operatorname{Gal}(\Q(\omega)/\Q)}f(\omega^{\sigma})\longrightarrow\int_{[0,1)^d}f(e^{\mathrm{i} 2\pi x_1},\ldots,e^{\mathrm{i}2\pi x_d})\d x_1\cdots\d x_d
\end{equation}
as $\delta(\omega)\to\infty$.

In \cite{DimitrovHabegger}, V. Dimitrov and P. Habegger proved a quantitative version of the equidistribution result for functions $f=\operatorname{log}|P|$ where $P\in\overline{\Q}[T_1^{\pm1},\ldots,T_d^{\pm1}]$ is an \textit{essentially atoral} Laurant polynomial. 
For the definition of essentially atoral polynomial, we refer to \Cref{Notation Section}. Note that the function $f=\operatorname{log}|P|$ has singularities at the vanishing set of $P$. Therefore, the result of Dimitrov and Habegger is an extension of (\ref{eq: Bilu's thm}). In this case, the integration in the right hand side of (\ref{eq: Bilu's thm}) becomes the \textit{(logarithmic) Mahler measure}
\[
m(P)\coloneqq\int_{[0,1)^d}\operatorname{log}\big|P(e^{\mathrm{i}2\pi x_1},\ldots,e^{\mathrm{i}2\pi x_d})\big|\d x_1\cdots\d x_d
\]
of the polynomial $P$. The study of Mahler measure plays an important role in height theory, which studies the arithmetic complexity of points and cycles in varieties.

Note that given any torsion point $\omega=(\omega_1,\ldots,\omega_d)\in\mathbb{G}_m^d$, we have $\omega_j=e^{\mathrm{i}2\pi x_j}$ with a real number $x_j\in[0,1)$ for all $j$. The main result of V. Dimitrov and P. Habegger, as presented in \cite[Theorem 1.1]{DimitrovHabegger}, considers the sum over all Galois orbits of $\omega$, i.e. the whole higher dimensional cube $[0,1)^d$. However, in certain applications, it becomes necessary to focus on the sum restricted to specific subsets of the full Galois orbits, i.e. a subset of $[0,1)^d$. Here we consider subsets obtained by partitioning the unit hypercube into polyhedral subsets. The following result is the main theorem of this article, generalizing \cite[Theorem 1.1]{DimitrovHabegger}.

Denote $e(x)=(e^{\mathrm{i}2\pi x_1},\ldots,e^{\mathrm{i}2\pi x_d})$ for $x=(x_1,\ldots,x_d)\in\R^d$. We define the \textit{cotropicalization map} by
\[
\operatorname{ct}:(\C^*)^d\longrightarrow [0,1)^d, \quad (z_1,\ldots,z_d) \longmapsto (x_1,\ldots,x_d),
\]
where $z_i=r_ie^{\mathrm{i}2\pi x_i}$ for $r_i>0$ and $x_i\in[0,1)$. 

\begin{thm}\label{main thm}
    Let $d,k$ be integers and let $P\in\overline{\Q}[T_1^{\pm 1},\ldots,T_d^{\pm 1}]\backslash\{0\}$ be essentially atoral with at most $k$ nonzero terms. Let $\Delta\subset[0,1)^d$ be a polytope of dimension $d$. Then there exists a constant $\kappa=\kappa(d,k)>0$ such that the following holds.

    Given any torsion point $\omega\in\mathbb{G}_m^d$ suppose that the strictness degree $\delta(\omega)$ is large enough. Then $P(\omega^\sigma)\neq 0$ for all $\sigma\in\operatorname{Gal}(\Q(\omega)/\Q)$, and as $\delta(\omega)\to\infty$ we have
    \[
    \left| \frac{1}{[\Q(\omega):\Q]}\sum_{\substack{\sigma\in\operatorname{Gal}(\Q(\omega)/\Q)\\ \omega^\sigma\in \operatorname{ct}^{-1}(\Delta)}}\operatorname{log}\big|P(\omega^\sigma)\big|-\int_\Delta\operatorname{log}\big|P(e(x))\big|\d x \right|\ll_{\Delta,P}\delta(\omega)^{-\kappa}.
    \]
\end{thm}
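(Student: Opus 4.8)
\medskip
\noindent\textbf{Proof strategy.} The plan is to run the Dimitrov--Habegger argument with a polyhedral cutoff, the genuinely new tool being \Cref{Koksma polytope} (Koksma's inequality over polytopes) and the analytic heavy lifting being imported from \cite{DimitrovHabegger} and made explicit in \Cref{Appendix estimate log}. Throughout, write $N=[\Q(\omega):\Q]$, $x^{\sigma}=\operatorname{ct}(\omega^{\sigma})$, and let $D(\omega)$ denote the quantity inside the absolute value in the statement. Two preliminary facts: by \Cref{discrepancy over Galois orbit} the star-discrepancy of the multiset $\{x^{\sigma}\}_{\sigma}$ is $\ll_{d}\delta(\omega)^{-c_{0}}$ for an explicit $c_{0}=c_{0}(d)>0$; and, because $P$ is essentially atoral, Laurent's theorem \cite{Laurent} together with the quantitative refinements of \cite{DimitrovHabegger} yields a constant $C_{1}=C_{1}(d,k,P)$ such that, once $\delta(\omega)$ is large, $P(\omega^{\sigma})\neq 0$ and in fact $|P(\omega^{\sigma})|\geq\delta(\omega)^{-C_{1}}$ for every $\sigma$. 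This last, Liouville-type, bound is what prevents any single conjugate from spoiling the discrete sum.

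The first step is to excise the singularity of $\log|P|$. Fix a truncation height $M>0$, to be chosen of size $\asymp\log\delta(\omega)$, and split $\log|P|=\Phi_{M}+R_{M}$ on $(S^{1})^{d}$, with $\Phi_{M}=\max(\log|P|,-M)$ bounded and continuous (locally constant $=-M$ near the zero set of $P$) and $R_{M}=\min(\log|P|+M,0)$ supported on the sublevel set $U_{M}=\{|P|<e^{-M}\}$. On the integral side, the essentially atoral hypothesis gives a {\L}ojasiewicz-type volume estimate $\operatorname{vol}\big(\{|P|<\eta\}\cap(S^{1})^{d}\big)\ll_{P}\eta^{c_{2}}$ (the analytic content behind \Cref{essentially atoral} and \Cref{P i are essentially atoral}), whence, by the layer-cake formula, $\big|\int_{\Delta}R_{M}(e(x))\,\d x\big|\ll_{P}e^{-c_{2}M/2}$. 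On the discrete side, the same estimate transported along the Galois orbit (again via \cite{DimitrovHabegger}) gives $\#\{\sigma:\omega^{\sigma}\in U_{\eta}\}\leq N\big(\eta^{c_{2}}+\delta(\omega)^{-c_{3}}\big)$; summing this dyadically over the range $\delta(\omega)^{-C_{1}}\leq\eta\leq e^{-M}$ --- which is finite precisely because $|P(\omega^{\sigma})|\geq\delta(\omega)^{-C_{1}}$ --- bounds $\tfrac1N\sum_{\sigma}|R_{M}(\omega^{\sigma})|\ll_{P}(\log\delta(\omega))\big(e^{-c_{2}M}+\delta(\omega)^{-c_{3}}\big)$. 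It therefore remains to estimate the corresponding difference for the bounded continuous function $\Phi_{M}$.

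For that I would apply \Cref{Koksma polytope} to $\Phi_{M}$. Unwinding that corollary through the constructions of \Cref{subsection: Construction of continuous characteristic function}, the difference $\big|\tfrac1N\sum_{x^{\sigma}\in\Delta}\Phi_{M}(e(x^{\sigma}))-\int_{\Delta}\Phi_{M}(e(x))\,\d x\big|$ is controlled, for each scale $\epsilon>0$, by three contributions: (i) $\|\Phi_{M}\|_{\infty}\cdot\operatorname{vol}(\Delta\setminus\Delta_{\epsilon})\ll_{\Delta,P}M\epsilon$ by \Cref{volume bet Delta and Delta_epsilon}, for replacing $\mathbf 1_{\Delta}$ by the piecewise-affine cutoff inside the integral; (ii) the cube Koksma error for the smoothed products, which by \Cref{Koksma polytope 0} is $\ll\delta(\omega)^{-c_{0}}$ times the modulus of continuity of those products, the latter being $\ll_{\Delta,P}M(e^{M}+\epsilon^{-1})$ by \Cref{prop:estimate of modulus of continuity} (the $e^{M}$ is the local Lipschitz constant of $\Phi_{M}$, since near the truncation locus the gradient of $\log|P|$ has size $\ll_{P}e^{M}$); and (iii) $\|\Phi_{M}\|_{\infty}$ times the discrepancy of the collar $\Delta\setminus\Delta_{\epsilon}$, which is $\ll_{\Delta}\epsilon+\delta(\omega)^{-c_{0}}$ by \Cref{isotropical discrepancy and discrepancy} and \Cref{volume bet Delta and Delta_epsilon}. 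Adding everything up,
\[
|D(\omega)|\ \ll_{\Delta,P}\ e^{-c_{2}M/2}+(\log\delta)\,\delta^{-c_{3}}+M\epsilon+M\big(e^{M}+\epsilon^{-1}\big)\delta^{-c_{0}},
\]
and choosing $M=\alpha\log\delta(\omega)$ and $\epsilon=\delta(\omega)^{-\beta}$ with $\alpha,\beta>0$ small enough that every resulting exponent is negative makes the right-hand side $\ll_{\Delta,P}\delta(\omega)^{-\kappa}$ for a suitable $\kappa=\kappa(d,k)>0$; \Cref{Appendix estimate log} then pins down an admissible value of $\kappa$.

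I expect the genuine obstacle to be the discrete control of the singular part $R_{M}$ --- bounding how many Galois conjugates fall into the shrinking sublevel set $\{|P|<e^{-M}\}$ and how deep they may descend there. This is precisely where ``essentially atoral'' is indispensable: it forces $\{P=0\}\cap(S^{1})^{d}$ to be a proper real-analytic subset of controlled complexity, so that torsion conjugates cannot cluster near it faster than the discrepancy allows, and it underpins the lower bound $|P(\omega^{\sigma})|\geq\delta(\omega)^{-C_{1}}$; both are quoted from \cite{DimitrovHabegger}. The remaining, purely convex-geometric, difficulty --- obtaining a collar volume $O_{\Delta}(\epsilon)$ and a modulus of continuity $O_{\Delta}(\epsilon^{-1})$ for the piecewise-affine cutoff, uniformly over all $d$-dimensional polytopes $\Delta$ --- is exactly what \Cref{Koksma section} is built to supply.
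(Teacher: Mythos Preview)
Your overall architecture---truncate $\log|P|$ at height $-M$, handle the bounded piece $\Phi_M$ by the polytope Koksma machinery of \Cref{Koksma section}, and treat the two singular remainders separately---is exactly the paper's strategy (\S\ref{section proof}, inequality \eqref{eq:tri ineq} with $r=e^{-M}$). The integral remainder and the Koksma step are also handled the same way.

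The gap is in your control of the discrete singular term $\tfrac1N\sum_\sigma|R_M(\omega^\sigma)|$. Two problems compound there. First, the counting estimate you quote from \cite{DimitrovHabegger} is misstated: their Lemma~7.4 gives
\[
\frac{\#\{\sigma:|P(\omega^\sigma)|<\eta\}}{N}\ \ll_{d,k}\ \eta^{1/(2k)}+\deg(P)\,\frac{D^{1/(d+1)}}{\eta},
\]
so the discrepancy contribution carries a factor $\eta^{-1}$ and blows up as $\eta\to 0$; it is not a uniform $\delta(\omega)^{-c_3}$. Second, and more seriously, the Liouville-type lower bound $|P(\omega^\sigma)|\ge\delta(\omega)^{-C_1}$ with $C_1=C_1(d,k,P)$ is not a result of \cite{DimitrovHabegger}; the only general a priori bound of this type has exponent proportional to $N=[\Q(\omega):\Q]$, not to $\log\delta(\omega)$, and is far too weak to truncate your dyadic sum. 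With the correct counting bound and no Liouville input, the layer-cake integral $\int_M^{\infty}\big(e^{-c_2 t}+D^{1/(d+1)}e^{t}\big)\,dt$ diverges, so this step fails as written.

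The paper circumvents this entirely. Since $\log_r|P|-\log|P|\ge 0$ termwise, the restricted singular sum over $\{x_i\in\Delta\}$ is dominated by the unrestricted one, and the latter is controlled by \cite[Lemma~7.7]{DimitrovHabegger}:
\[
\frac1N\sum_{|P(\omega^\sigma)|<r}\bigl|\log|P(\omega^\sigma)|\bigr|\ \ll_{d,k}\ \frac{\deg(P)\,D^{1/(d+1)}}{r^{2}}+r^{1/(4k)}+\Bigl|m(P)-\frac1N\sum_{\sigma}\log|P(\omega^\sigma)|\Bigr|.
\]
The last term is then bounded by $\delta(\omega)^{-\gamma}$ via the full-orbit theorem \Cref{DH main thm}. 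This is the key manoeuvre you are missing: rather than a pointwise repulsion bound, one feeds the already-proved equidistribution over the \emph{whole} hypercube back into the estimate for the singular tail over $\Delta$. Replacing your dyadic-plus-Liouville paragraph by this argument repairs the proof; the parameter optimisation (your $M=\alpha\log\delta$, the paper's $r=D^{1/(4d+4)}$) then goes through as you describe.
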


Moreover, an explicit value of $\kappa=\kappa(d,k)$ can be computed by applying \Cref{rmk: kappa} and \Cref{algorithmkappa}.

Other types of subsets of the full Galois orbits were considered in \cite{DimitrovHabegger}.
In \cite[Theorem 8.8]{DimitrovHabegger}, the case in which the subset is made by the orbit of $\omega$ under the action of a subgroup of the Galois group, was already studied. When the subset is restricted to a finite subgroup of $\mathbb{G}_m^d$, this was discussed in \cite[Theorem 1.2]{DimitrovHabegger}, which recovers \cite[Theorem 1.3]{lind2013homoclinic} by a different approach. It is worth to point out that \Cref{main thm} cannot be covered by the previous two cases.

This theorem was inspired by the work of R. Gualdi and M. Sombra \cite{RM}: they studied the distribution of the height of the intersection between the projective line defined by the linear polynomial $x+y+z$ and its translate by a torsion point. This serves as an example of how the height of the solution set is concluded from the arithmetic complexity of a system. They computed the limit of such heights of a sequence of torsion points by Bilu's equidistribution theorem \cite{bilu1997limit} and proposed a question regarding a quantitative version for their particular case \cite[Question 6.2]{RM}. It is shown in \cite[Section 5]{RM} that the formula of this height is locally of the form $\frac{1}{[\Q(\omega)/\Q]}\sum_\sigma\operatorname{log}|P(\omega^\sigma)|$, where $\sigma$ runs over the elements of the Galois group such that $\omega^{\sigma}$ corresponds to points in a triangle contained in unit square $[0,1)^2$. As an application of \Cref{main thm}, we solve \cite[Question 6.2]{RM} by deriving the following proposition.

\begin{prop}\label{main application}
    Let $(\omega_\ell)_{\ell\geq 1}$ be a strict sequence of non-trivial torsion points in $\mathbb{G}_m^2$, i.e. any proper algebraic subgroup of $\mathbb{G}_m^2$ contains $\omega_\ell$ for only finitely many values of $\ell$. For every $\omega_\ell=(\omega_{\ell,1},\omega_{\ell,2})$, let $\mathcal{P}(\omega_\ell)\in\mathbb{P}^2(\overline{\Q})$ be the solution of the system of linear equations
    \[
    x+y+z=x+\omega_{\ell,1}^{-1}y+\omega_{\ell,2}^{-1}z=0
    \]
    in the $2$-dimensional projective space. Then as $\ell\to\infty$ we have
    \[
    \left| \mathrm{h}(\mathcal{P}(\omega_\ell)) - \frac{2\zeta(3)}{3\zeta(2)} \right|\ll \delta(\omega_\ell)^{-1/(2^{61}\times 5^5)},
    \]
    where $\zeta$ is the Riemann zeta function.
\end{prop}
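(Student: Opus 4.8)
The plan is to deduce \Cref{main application} from \Cref{main thm} by feeding it the piecewise description of the height obtained by Gualdi and Sombra. Write $\omega=(\omega_1,\omega_2)$. Since $\omega_\ell$ is non-trivial the two linear forms are not proportional, so $\mathcal{P}(\omega)$ is a well-defined point of $\mathbb{P}^2(\overline{\Q})$, and solving the system and clearing denominators gives $\mathcal{P}(\omega)=[\,\omega_2-\omega_1:\omega_1(1-\omega_2):\omega_2(\omega_1-1)\,]$. The first step is to recall from \cite[Section 5]{RM} the resulting decomposition of the (absolute logarithmic) Weil height of this point. Splitting the sum of local heights into archimedean and non-archimedean places and using $|\omega_i^\sigma|=1$ at every archimedean place, the archimedean part equals
\[
\frac{1}{[\Q(\omega):\Q]}\sum_{\sigma\in\operatorname{Gal}(\Q(\omega)/\Q)}\log\max\bigl(\,|1-\omega_1^\sigma(\omega_2^\sigma)^{-1}|,\ |1-\omega_1^\sigma|,\ |1-\omega_2^\sigma|\,\bigr).
\]
Because $t\mapsto|\sin\pi t|$ is even about $\tfrac12$ and increasing on $[0,\tfrac12]$, for each pair of these three moduli the locus in $[0,1)^2$ on which one dominates is a finite union of convex polytopes; intersecting and triangulating, one gets a triangulation $[0,1)^2=\bigcup_i\Delta_i$ into $2$-dimensional polytopes together with Laurent polynomials $P_i\in\overline{\Q}[T_1^{\pm1},T_2^{\pm1}]$, each with at most $k=2$ nonzero terms (up to a monomial factor, one of $1-T_1$, $1-T_2$, $1-T_1T_2^{-1}$), such that $\log\max(\cdots)=\log|P_i(e(x))|$ on $\Delta_i$; the $P_i$ are essentially atoral by \Cref{P i are essentially atoral}. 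The non-archimedean part is $\le 0$, and a short estimate with cyclotomic units (as in \cite[Section 5]{RM} — using $\Phi_m(1)=p$ if $m=p^k$ and $\Phi_m(1)=1$ otherwise, together with $\delta(\omega)\le\min(\operatorname{ord}\omega_1,\operatorname{ord}\omega_2)$) shows it is $\gg-\delta(\omega)^{-1/2}$. Hence
\[
h(\mathcal{P}(\omega))=\sum_i\frac{1}{[\Q(\omega):\Q]}\sum_{\substack{\sigma\in\operatorname{Gal}(\Q(\omega)/\Q)\\ \omega^\sigma\in\operatorname{ct}^{-1}(\Delta_i)}}\log|P_i(\omega^\sigma)|+O\bigl(\delta(\omega)^{-1/2}\bigr).
\]

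Next I would observe that strictness of $(\omega_\ell)_{\ell\ge1}$ forces $\delta(\omega_\ell)\to\infty$: for every $N$ the finitely many subgroups $H_a$ with $0<|a|\le N$ each contain only finitely many $\omega_\ell$, so $\delta(\omega_\ell)\le N$ for only finitely many $\ell$. Thus for all large $\ell$ the strictness degree exceeds the common threshold of \Cref{main thm} applied, with $d=2$, to the finitely many pairs $(\Delta_i,P_i)$; in particular $P_i(\omega_\ell^\sigma)\neq0$ for every $\sigma$, and
\[
\left|\frac{1}{[\Q(\omega_\ell):\Q]}\sum_{\substack{\sigma\\ \omega_\ell^\sigma\in\operatorname{ct}^{-1}(\Delta_i)}}\log|P_i(\omega_\ell^\sigma)|-\int_{\Delta_i}\log|P_i(e(x))|\,\d x\right|\ll_{\Delta_i,P_i}\delta(\omega_\ell)^{-\kappa(2,2)}.
\]
Summing over the finitely many $i$, absorbing the $O(\delta(\omega_\ell)^{-1/2})$ contribution (legitimate since $\kappa(2,2)<\tfrac12$), and using that $\sum_i\int_{\Delta_i}\log|P_i(e(x))|\,\d x=\int_{[0,1)^2}\log\max(\cdots)\,\d x$ is exactly the limit computed by Gualdi and Sombra in \cite{RM} via Bilu's theorem \cite{bilu1997limit}, namely $\tfrac{2\zeta(3)}{3\zeta(2)}$, gives $\bigl|h(\mathcal{P}(\omega_\ell))-\tfrac{2\zeta(3)}{3\zeta(2)}\bigr|\ll\delta(\omega_\ell)^{-\kappa(2,2)}$ for all large $\ell$; the finitely many remaining $\ell$ are absorbed into the implied constant.

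Finally, running \Cref{rmk: kappa} and \Cref{algorithmkappa} with $d=2$ and $k=2$ produces the admissible value $\kappa(2,2)=1/(2^{61}\times 5^5)$, which yields the bound in the statement. I expect the genuine effort to sit in the first step rather than in any new analytic input: one must check carefully that the Gualdi--Sombra piecewise formula really produces honest $2$-dimensional polytopes $\Delta_i\subset[0,1)^2$ with essentially atoral companions $P_i$ of bounded term count (so that \Cref{main thm} and \Cref{P i are essentially atoral} apply verbatim) and keep the non-archimedean tail — harmless for the final estimate but genuinely present — under control; after that, the argument is just the assembly of finitely many instances of \Cref{main thm} together with the lengthy but mechanical run of \Cref{algorithmkappa}.
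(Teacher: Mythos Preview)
Your proposal is correct and follows essentially the same route as the paper: decompose the archimedean height via the Gualdi--Sombra triangulation of $[0,1)^2$ into pieces governed by essentially atoral binomials, apply \Cref{main thm} on each triangle, control the non-archimedean contribution by $O(\delta(\omega)^{-1/2})$, identify the resulting integral with $2\zeta(3)/(3\zeta(2))$, and extract $\kappa(2,2)=1/(2^{61}\times 5^5)$ from \Cref{rmk: kappa} and \Cref{algorithmkappa}. The paper makes explicit two points you leave slightly implicit --- that the triangle boundaries lie in $\operatorname{ct}^{-1}(H_a)$ for a finite list of $a$ with $|a|\le 2$ (so no double-counting once $\delta(\omega)\ge 3$), and that the non-archimedean part is exactly $-\Lambda(\operatorname{ord}\omega)/\phi(\operatorname{ord}\omega)$ by \cite[Corollary~3.4]{RM} --- but your sketch already flags both as routine checks.
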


The following provides an overview of the proof of \Cref{main thm} and outlines the structure of this article. 

All notations are stated in \Cref{Notation Section}. Also, we have written down some formal definitions and preliminaries in this section.

A standard way from measure theory to estimate an equidistribution result is Koksma's inequality, see for example \cite[Theorem 5.4]{harman1998metric}. It says that if a function $f:[0,1)\to\R$ has bounded \textit{variation}, i.e.
$V(f)\coloneqq\operatorname{sup}_{0\leq a_0\leq\cdots\leq a_m< 1}\sum_{i=1}^m|f(a_i)-f(a_{i-1})|$
is bounded, then for every finite sequence $(x_1,x_2,\ldots,x_n)$ in $[0,1)$ we have
\begin{equation}\label{standard Koksma ineq}
    \left|\frac{1}{n}\sum_{i=1}^nf(x_i)-\int_{[0,1]}f(x) \d x\right|\leq D(x_1,\ldots,x_n)V(f),
\end{equation}
where 
\[
D(x_1,\ldots,x_n)\coloneqq\sup_{0\leq a<b\leq 1}\left| \frac{\sharp\{i:a\leq x_i< b\}
}{n}-(b-a) \right|
\]
is called the \textit{discrepancy} of $(x_1,\ldots,x_n)$. 
We say a sequence $(x_1,x_2,\ldots,x_n,\ldots)$ in $[0,1)$ is \textit{equidistributed} if $D(x_1,\ldots,x_n)\to 0$ as $n\to\infty$. Hence, if the quantity $D(x_1,\ldots,x_n)$ is smaller, then the finite sequence $(x_1,\ldots,x_n)$ is more ``equidistributed'' in $[0,1)$. 

However, in our case, we need to deal with functions having logarithmic singularities, which don't have bounded variation. It leads us to estimate the log function using the following ``bounded log function''
\[
\operatorname{log}_r(x)\coloneqq\operatorname{log}\operatorname{max}\{r,x\}\quad\text{for }r>0.
\]

The bounded log function also appears in \cite{DimitrovHabegger}, where some useful property, that we recall and enrich in \Cref{section proof}, are shown. By the triangle inequality, we have
\begin{multline}\label{eq: introduction_idea}
    \left| \frac{1}{n}\sum_{\substack{\sigma\in\operatorname{Gal}(\Q(\omega)/\Q)\\ \omega^\sigma\in \operatorname{ct}^{-1}(\Delta)}}\operatorname{log}|P(\omega^\sigma)|-\int_\Delta\operatorname{log}|P(e(x))|\d x \right|\\
    \leq \left| \frac{1}{n}\sum_{\substack{\sigma\in\operatorname{Gal}(\Q(\omega)/\Q)\\ \omega^\sigma\in \operatorname{ct}^{-1}(\Delta)}}\operatorname{log}_r|P(\omega^\sigma)|-\int_\Delta \operatorname{log}_r|P(e(x))|\d x \right| \\
    + \frac{1}{n}\left| \sum_{\substack{\sigma\in\operatorname{Gal}(\Q(\omega)/\Q)\\ \omega^\sigma\in \operatorname{ct}^{-1}(\Delta)}}\big( \operatorname{log}_r|P(\omega^\sigma)|-\operatorname{log}|P(\omega^\sigma)| \big) \right|\\
    + \left| 
\int_\Delta \big(\operatorname{log}_r|P(e(x))| -\operatorname{log}|P(e(x))|\big)\d x\right|.
\end{multline}

For the first difference in the right hand side of (\ref{eq: introduction_idea}), we need to develop a version of Koksma's inequailty for polytopes to estimate it, because the standard generalization of the Koksma's inequality (\ref{standard Koksma ineq}) only works for the whole $d$-dimensional interval $[0,1)^d$. There are some generalization of Koksma's inequality into more general sets like \cite{Koksma_derivatives} and \cite{harman2010variations}, but both of them have some shortcomings for our problems: \cite{Koksma_derivatives} contains the partial derivatives of functions that are difficult to compute in our case, and \cite{harman2010variations} somehow hides the geometric meaning of discrepancy. Therefore, based on the method in \cite[Proposition 7.1]{DimitrovHabegger}, in \Cref{Koksma section}, a version of Koksma's inequality over polytope is developed and applied to our bounded log function. As \cite[Proposition 7.1]{DimitrovHabegger} only works for continous function, we will construct ``continuous characteristic functions'' over polytopes for our purpose.

For the second difference in the right hand side of (\ref{eq: introduction_idea}), we use the triangle inequality again to divide it into two sums. The sum regarding $\operatorname{log}_r$ is easy to estimate because the function is bounded. For the sum involves log functions, the main theorem in \cite{DimitrovHabegger} will be applied. The estimation for the third difference is similar with the second one. 

In \Cref{section proof}, we put everything together to give a complete proof of \cref{main thm}.

\Cref{section application} contains a proof of \Cref{main application}, which gives an application of \Cref{main thm} and answers the question in the paper of R. Gualdi and M. Sombra we mentioned above \cite[Question 6.2]{RM}.

In \Cref{Appendix estimate log}, we present an algorithm that enables us to compute the explicit value of~ $\kappa$ in \Cref{main thm}.

~\\
\noindent\textbf{Acknowledgement}. 
This is part of the author's PhD project. She gratefully acknowledges many helpful suggestions and carefully reading provided by her supervisors, Roberto Gualdi and Walter Gubler. She also wishes to thank Riccardo Pengo for the reference suggestions regarding limits of Mahler measures.

\newpage

\section{Preliminaries}\label{Notation Section}

\subsection{Conventions and notations}

Throughout the article we use the following notations.

\begin{itemize}
    \item $|\cdot|$: maximum norm in $\R^d$ and $\operatorname{GL}_d(\Z)$. If $A=(a_{ij})\in\operatorname{GL}_d(\Z)$, then $|A|=\operatorname{max}_{i,j}|a_{i,j}|$.
    \item $|\cdot|_2$: Euclidean norm in $\R^d$.
    \item $\langle\cdot,\cdot\rangle$: Euclidean inner product on $\R^d$.
    \item A \textit{cubic ball} in $\R^d$ is a subset of the form $\{y\in\R^d:|x-y|\leq r\}$ for $x\in\R^d$ and $r\in\R$, where $x$ is called the center of the ball and $r$ is called the radius of the ball.
    \item $\operatorname{diam}(A)$: the diameter of a subset $A\subset\R^d$ with respect to maximum norm, which means $\operatorname{diam}(A)=\operatorname{max}\{|x-y|:x,y\in A\}$.
    \item $\mu$: Lebesgue measure.
    \item $\chi_A$: the characteristic function of a subset $A\subset\R^d$, defined as the real valued function given by $\chi_A(x)=1$ if $x\in A$ and $\chi_A(x)=0$ if $x\notin A$.
    \item $e(x)\coloneqq(e^{\mathrm{i}2\pi x_1},\ldots,e^{\mathrm{i}2\pi x_d})$ for $x=(x_1,\ldots,x_d)\in\R^d$.
    \item $m(P)$: the (logarithmic) Mahler measure for a Laurent polynomial $P$ in $d$ variables over~ $\C$ defined as 
    \[
    m(P)\coloneqq\int_{[0,1)^d}\operatorname{log}\big|P(e(x))\big|\d\mu(x).
    \]
    
    \item $|P|$: the maximal norm of the coefficient vector of a Laurent polynomial $P$.
    
    \item $\overline{\Q}$: a fixed algebraic closure of $\Q$.
    \item For $\omega=(\omega_1,\ldots,\omega_d)\in(\overline{\Q}^*)^d$, $a=(a_1,\ldots,a_d)\in\Z^d$ and $A=(a_{ij})\in\operatorname{GL}_d(\Z)$, we set
    \[
    \omega^a\coloneqq \omega_1^{a_1}\cdots\omega_d^{a_d}\quad\text{and}\quad \omega^A\coloneqq(\omega_1^{a_{11}}\cdots\omega_d^{a_{d1}},\ \ldots,\ \omega_1^{a_{1d}}\cdots\omega_d^{a_{dd}}).
    \]
    \item $\delta(\omega)$: the strictness degree of $\omega\in(\overline{\Q}^*)^d$ defined as
    \[
    \operatorname{inf}\left\{|a|:a\in\Z^d\backslash\{0\}\text{, }\omega^a=1\right\}.
    \] 
    
    \item $S^1\coloneqq\{z\in\C:|z|=1\}$.

    \item For $z\in S^1$ and $a=(a_1,\ldots,a_d)\in\Z^d$, we set $z^a=(z^{a_1},\ldots,z^{a_d})$.
    
    \item $\operatorname{log}_r(x)\coloneqq\operatorname{log}\operatorname{max}\{r,x\}$ for $r\in\R_{>0}$ and $x\in\R$, which is the bounded logarithm function.

    \item $\ll_{\ \bigcdot\ }$: the Vinogradov's notation where the constants implicit in it only depend on $\ \bigcdot\ $. For example, for a polynomial $P$, $\ll_P$ denotes the Vinogradov's notation, in which the implicit constants only depend on $P$.

    \item $\operatorname{ct}$: the cotropicalization map defined by
\[
(\C^*)^d\longrightarrow [0,1)^d, \quad (z_1,\ldots,z_d) \longmapsto (x_1,\ldots,x_d),
\]
where $z_i=r_ie^{\mathrm{i}2\pi x_i}$ for $r_i>0$ and $x_i\in[0,1)^d$. 

\end{itemize}

\subsection{Convex geometry}

Let $d\geq 1$ be an integer. In the statement of \cref{main thm}, we consider subsets of Galois orbits of torsion points that lie in the preimages of polytopes within $[0,1)^d$ under the cotropicalization map. Therefore, we need to introduce some basic definitions and properties from convex geometry.

Let $N\cong\Z^d$ be a lattice of rank $d$ and $M\coloneqq N^\vee$ be its dual. Then $N_\R\coloneqq N\otimes\R\cong\R^d$ is a real vector space of dimension $d$ and $M_\R\coloneqq M\otimes\R=N_\R^\vee$ is its dual space.

    A subset $C\subset N_\R$ is \textit{convex} if it contains all the segments with vertices in $C$. In other words, for any $u_1,u_2\in C$, we have the line segment
    \[
    \overline{u_1u_2}\coloneqq\{tu_1+(1-t)u_2:t\in[0,1]\}\subset C.
    \]

    An \textit{affine space} is of the form $\R v_1+\cdots+\R v_n+p$, where $\R v_1+\cdots+\R v_n$ is a linear space with $v_i\in\R^d$ and $p\in\R^d$. Let $C\subset\R^d$ be a convex subset. The \textit{affine hull} of $C$, denoted~ $\operatorname{aff}(C)$, is the minimal affine space which contains it. The \textit{dimension} of $C$ is defined as the dimension of its affine hull. The \textit{relative interior} of $C$, denoted $\operatorname{ri}(C)$, is defined as the interior of $C$ relative to its affine hull. 

    A convex subset $F\subset C$ is called a \textit{face} of $C$ if, for every closed line segment $\overline{u_1u_2}\subset C$ such that $\operatorname{ri}(\overline{u_1u_2})\cap F\neq\varnothing$, the inclusion $\overline{u_1u_2}\subset F$ holds. A face of $C$ of codimension $1$ is called a \textit{facet}. A face of $C$ of dimension $0$ is called a \textit{vertex}. A non-empty subset $F\subset C$ is called an \textit{exposed face} of $C$ if there exists $x\in M_\R$ such that
    \[
    F=\{u\in C:\langle x,u\rangle\leq\langle x,v\rangle, \forall v\in C\}.
    \]
    According to \cite[Theorem 1.5.8]{brondsted2012introduction}, any exposed face of $C$ is a face.

    Pick $x\in M_\R$ and $c\in\R$. Then the set
    \[
    H_{x,c}\coloneqq\{u\in N_\R:\langle x,u\rangle\geq c\}
    \]
    is called a \textit{closed half-space}. A \textit{polyhedron} in $N_\R$ is the intersection of finitely many closed half-spaces in $N_\R$. A \textit{polytope} in $N_\R$ is a bounded polyhedron in $N_\R$.  A polytope can be equivalently defined as the \textit{convex hull} of finitely many points. Indeed, a subset $P\subset N_\R$ is a polytope if and only if there exists $p_1,\ldots,p_s\in N_\R$ such that
\[
P=\operatorname{conv}(p_1,\ldots,p_s)\coloneqq\left\{ 
\sum_{i=1}^s\lambda_ip_i:\lambda_i\geq 0,\ \sum_{i=1}^s\lambda_i=1 \right\}.
\]
 We refer to \cite[Theorem 1.9.2]{brondsted2012introduction} for the equivalence. The structure of the faces of $P$ is given in \cite[Theorem 1.7.5]{brondsted2012introduction}. Specifically, every face of $P$ is of the form
 \[
 \{u\in P:\langle x,u\rangle=c \}
 \]
 for some $x\in M_\R$ and $c\in\R$.

\begin{defn}
    Let $P\subset N_\R$ be a polytope of the whole dimension $d$ and $F$ be a facet of $P$. Let $v_F$ be a vector orthogonal to $F$ with $|v_F|_2=1$. We set
    \[
    P_F\coloneqq\{u+\lambda v_F: u\in F,0\leq\lambda\leq 1\}.
    \]
    Then $P_F\subset N_\R$ is a polytope of dimension $d$. We call $\operatorname{vol}(F)\coloneqq\mu(P_F)$ the \textit{volume} of $F$ and the sum of the volumes of all facets of $P$ the \textit{surface area} of $P$.
\end{defn}

To prove our version of Koksma's inequality over polytopes, we will construct a continuous version of characteristic function of each polytope in \Cref{Koksma section}. The idea is to shrink the polytope by an arbitrary small scalar and connect the new smaller polytope and the original polytope by some linear functions. Therefore, we also introduce the notion of piecewise affine function.

    Recall that an \textit{affine function} $f:A\to B$ from an affine space $A$ to an affine space $B$ is a function which satisfies
    \[
    f(\lambda a+ (1-\lambda)c )=\lambda f(a)+(1- \lambda)f(c)
    \]
    for $a,c\in A$ and $\lambda \in\R$. In the more specific case where $A=L+x$ for a linear subspace $L$ of $\R^d$ and a point $x\in\R^d$, and $B=\R^d$, it follows from \cite[Section 1.1]{brondsted2012introduction} that there exists a linear function $g:L\to\R^d$ and a point $y\in\R^d$ such that $f(z+x)=g(z)+y$ for all $z\in L$.

    Let $P\subset N_\R$ be a polytope. A function $f:P\to\R$ is \textit{piecewise affine} if there is a finite cover of $P$ by closed subsets such that the restriction of $f$ to each of these closed subsets is the restriction of an affine function. Notice that every piecewise affine function is continuous. Hence, if we construct a piecewise affine characteristic function, then it will be continuous.

To show that our continuous version of characteristic function is piecewise affine, we first need to understand the set difference between the original polytope and the shrunk polytope. It will be described using a \textit{complete fan} consisting of \textit{polyhedral cones}.

A \textit{polyhedral cone} is a polyhedron $\sigma$ such that $\lambda\sigma=\sigma$ for all $\lambda\geq 0$. A \textit{fan} $\Sigma$ in $N_\R$ is a finite collection of polyhedral cones in $N_\R$ that such that
\begin{itemize}
    \item For all $\sigma\in\Sigma$, every face of $\sigma$ is also in $\Sigma$ and $\sigma$ does not contain any line.
    \item For all $\sigma,\sigma'\in\Sigma$, we have $\sigma\cap\sigma'$ is a face of both $\sigma$ and $\sigma'$.
\end{itemize}
If $\bigcup_{\sigma\in\Sigma}\sigma=N_\R$, then we say $\Sigma$ is \textit{complete}.

\subsection{Discrepancy}

Let $n\geq 1$ be an integer and let $x_1,\ldots,x_n$ be points in the hypercube $[0,1)^d$. We define the \textit{discrepancy} of the finite sequence $(x_1,\ldots,x_n)$ to be
\[
D(x_1,\ldots,x_n)\coloneqq\operatorname{sup}_{B}\left| \frac{\sum_{i=1}^n\chi_B(x_i)}{n}-\mu(B) \right|,
\]
where $B$ ranges over all products $\prod_{i=1}^d[a_i,b_i)$ with $0\leq a_i< b_i\leq 1$. By definition, the discrepancy lies in $[0,1]$. In some references, however, for example in \cite{harman1998metric}, the discrepancy is defined without normalization by $n$, thus allowing values greater than $1$.

In this paper, we consider the discrepancy of the arguments of the Galois orbit of a torsion point $\omega\in\mathbb{G}_m^d$. It has been estimated in \cite[Proposition 3.3]{DimitrovHabegger}:

\begin{prop}\label{discrepancy over Galois orbit}
    Let $\omega\in\mathbb{G}_m^d$ be a torsion point of order $n$, and let $\{\omega^\sigma:\sigma\in\operatorname{Gal}(\Q(\omega)/\Q)\}=\{e(x_i):1\leq i\leq n\}$, where all $x_i\in[0,1)^d$. Then we have
    \[
    D(x_1,\ldots,x_n)\ll_d \frac{(\operatorname{log}2\delta(\omega))^{d-1}\operatorname{log}\operatorname{log}3\delta(\omega)}{\delta(\omega)^{1/2}}
    \]
    as $\delta(\omega)\to\infty$.
\end{prop}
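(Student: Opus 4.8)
The plan is to reduce the bound to an estimate of exponential sums over the Galois orbit via the multidimensional Erdős–Turán–Koksma inequality, and then to evaluate those sums arithmetically. Recall that for points $y_1,\ldots,y_n\in[0,1)^d$ and any integer $H\geq 1$,
\[
D(y_1,\ldots,y_n)\ll_d \frac{1}{H}+\sum_{0<|h|\leq H}\frac{1}{r(h)}\left|\frac{1}{n}\sum_{j=1}^n e^{2\pi\mathrm{i}\langle h,y_j\rangle}\right|,\qquad r(h)\coloneqq\prod_{\ell=1}^d\max\{1,|h_\ell|\},
\]
where $h$ ranges over $\Z^d$ (see \cite[Chapter 2]{kuipers2012uniform}). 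I would apply this with $\{y_j\}=\{x_i\}$ and $n=[\Q(\omega):\Q]$ (the number of Galois conjugates). Since $e(x_i)=\omega^{\sigma}$ for the corresponding $\sigma\in\operatorname{Gal}(\Q(\omega)/\Q)$, the inner sum equals
\[
\frac{1}{n}\sum_{\sigma}\sigma(\omega^h)=\frac{1}{[\Q(\omega):\Q]}\operatorname{Tr}_{\Q(\omega)/\Q}(\omega^h),
\]
using that $\omega^h\in\Q(\omega)$ and that $\Q(\omega)/\Q$ is Galois, being abelian as a subextension of a cyclotomic field.

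Next I would compute this trace. The element $\omega^h$ is a root of unity; let $m_h$ be its exact order, so that $\omega^h$ is a primitive $m_h$-th root of unity and $\Q(\omega^h)=\Q(\zeta_{m_h})$. By transitivity of the trace,
\[
\operatorname{Tr}_{\Q(\omega)/\Q}(\omega^h)=[\Q(\omega):\Q(\zeta_{m_h})]\cdot\operatorname{Tr}_{\Q(\zeta_{m_h})/\Q}(\omega^h),
\]
and the second factor is the sum of all primitive $m_h$-th roots of unity, which is $\pm1$ when $m_h$ is squarefree and $0$ otherwise. Dividing by $[\Q(\omega):\Q]$ gives
\[
\left|\frac{1}{n}\sum_{j=1}^n e^{2\pi\mathrm{i}\langle h,x_j\rangle}\right|\leq\frac{1}{[\Q(\zeta_{m_h}):\Q]}=\frac{1}{\varphi(m_h)}.
\]
The arithmetic crux is a lower bound for $m_h$: writing $\Lambda=\{a\in\Z^d:\omega^a=1\}$, the integer $m_h$ is the least positive one with $m_hh\in\Lambda$, and since $h\neq0$ the vector $m_hh$ is a nonzero element of $\Lambda$, so $|m_hh|=m_h|h|\geq\delta(\omega)$, i.e. $m_h\geq\delta(\omega)/|h|$. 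Taking $H<\delta(\omega)$ forces $m_h\geq\delta(\omega)/H\to\infty$, so the classical bound $\varphi(m)\gg m/\log\log m$ applies; combined with the fact that $x\mapsto(\log\log x)/x$ is decreasing for large $x$, this gives $\varphi(m_h)^{-1}\ll|h|\,(\log\log 3\delta(\omega))/\delta(\omega)$ uniformly for $0<|h|\leq H$.

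Finally I would assemble everything. Using $|h|/r(h)\leq1$ and the elementary estimate $\sum_{0<|h|\leq H}|h|/r(h)\ll_d H(\log 2H)^{d-1}$, the Erdős–Turán–Koksma inequality becomes
\[
D(x_1,\ldots,x_n)\ll_d\frac{1}{H}+\frac{\log\log 3\delta(\omega)}{\delta(\omega)}\sum_{0<|h|\leq H}\frac{|h|}{r(h)}\ll_d\frac{1}{H}+\frac{H(\log 2H)^{d-1}\log\log 3\delta(\omega)}{\delta(\omega)}.
\]
Choosing $H=\big\lfloor\delta(\omega)^{1/2}\big\rfloor$, which is $<\delta(\omega)$ and tends to infinity once $\delta(\omega)$ is large, and noting $\log 2H\ll\log 2\delta(\omega)$, yields exactly the claimed bound $\ll_d(\log 2\delta(\omega))^{d-1}\log\log 3\delta(\omega)\cdot\delta(\omega)^{-1/2}$; a slightly finer choice of $H$ would in fact improve the logarithmic factors, but the stated estimate already suffices.

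The step I expect to be the real obstacle is the arithmetic input $m_h\geq\delta(\omega)/|h|$ together with its exploitation: one must check carefully that $m_hh$ is a genuine nonzero relation vector for $\omega$ (so that $\delta(\omega)$ may be invoked), and then apply the totient lower bound with the correct $\log\log$ factor uniformly in $h$, in particular ruling out any loss when $m_h$ is much larger than $\delta(\omega)$ by using the decay of $(\log\log x)/x$. By contrast, the Erdős–Turán–Koksma reduction, the trace computation, and the optimization in $H$ are routine once the setup is in place.
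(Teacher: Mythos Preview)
Your argument is correct. The paper does not supply its own proof of this proposition; it simply quotes the result from \cite[Proposition~3.3]{DimitrovHabegger}. Your approach via the Erd\H{o}s--Tur\'an--Koksma inequality, together with the evaluation of the character sum as a normalised trace $\mu(m_h)/\varphi(m_h)$ and the key arithmetic lower bound $m_h\geq\delta(\omega)/|h|$, is exactly the standard route and is, in fact, the argument used in the cited reference. All the steps you identify as delicate --- that $m_hh$ is a nonzero element of the relation lattice, the uniform totient bound via the eventual monotonicity of $(\log\log x)/x$, and the estimate $\sum_{0<|h|\leq H}|h|/r(h)\ll_d H(\log 2H)^{d-1}$ --- are handled correctly.
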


As we are concerned about polytopes, the following notion of isotropic discrepancy will also be useful. The reference for it is \cite[Chapter 2]{kuipers2012uniform}.

\begin{defn}
    Let $d,n\in\N$ and $x_1,\ldots,x_n\in[0,1)^d$. The \textit{isotropic discrepancy} of the finite sequence $(x_1,\ldots,x_n)$ is
    \[
    J(x_1,\ldots,x_n)\coloneqq\operatorname{sup}_C\left|\frac{ \sum_{i=1}^n \chi_C(x_i) }{n}-\mu(C)\right|
    \]
    where $C$ ranges over all convex subsets of $[0,1)^d$.
\end{defn}

\begin{prop}\label{isotropical discrepancy and discrepancy}
    Let $d,n\in\N$ and $x_1,\ldots,x_n\in[0,1)^d$. Then we have
    \[
    D(x_1,\ldots,x_d)\leq J(x_1,\ldots,x_n)\leq (4d\sqrt{d}+1)D(x_1,\ldots,x_n)^{1/d}.
    \]
\end{prop}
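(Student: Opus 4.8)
The plan is to prove the two inequalities separately; the left-hand one is essentially free, and the right-hand one is the classical geometric estimate going back to \cite[Chapter~2]{kuipers2012uniform}. For $D\le J$: every box $\prod_{i=1}^d[a_i,b_i)$ occurring in the definition of $D$ is a convex subset of $[0,1)^d$, so the supremum defining $J$ runs over a family containing the one defining $D$, and the inequality is immediate.

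For the right-hand inequality, fix a convex $C\subseteq[0,1)^d$, write $D=D(x_1,\ldots,x_n)$, and set $N_A=\tfrac1n\sum_{i=1}^n\chi_A(x_i)$ for a subset $A$. I would introduce a parameter $m\in\N$ (to be optimised at the end) and the grid of $m^d$ cubes $Q_{\vec k}=\prod_{i=1}^d[k_i/m,(k_i+1)/m)$, $\vec k\in\{0,\ldots,m-1\}^d$. Let $P$ be the union of the $Q_{\vec k}$ contained in $C$ and $Q$ the union of the $Q_{\vec k}$ meeting $C$, so that $P\subseteq C\subseteq Q$. The geometric heart of the argument, which I would isolate as a short lemma, is that convexity forces $P$ and $Q$ to be unions of few boxes while $Q\setminus P$ is thin. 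Indeed, in each of the $m^{d-1}$ columns parallel to the $x_d$-axis, the cubes contained in $C$ (respectively, meeting $C$) have contiguous indices $k_d$: a cube lies in $C$ exactly when its $2^d$ vertices do, as the cube is their convex hull, and $C$ meets the column in a convex set whose projection to the $x_d$-axis is an interval. Hence $P$ and $Q$ are each a disjoint union of at most $m^{d-1}$ axis-parallel boxes. Moreover every cube in $Q\setminus P$ meets $\partial C$, hence lies within Euclidean distance $\sqrt d/m$ of $\partial C$; since $C$ is convex and contained in $[0,1]^d$, its surface area is at most that of $[0,1]^d$, namely $2d$, so a Steiner-type estimate for the volume of a tube around a convex hypersurface gives $\mu(Q\setminus P)\le C_d/m$ for an explicit constant $C_d$.

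Now I would assemble the estimate. Because $P$ is a disjoint union of at most $m^{d-1}$ boxes, the triangle inequality together with the definition of $D$ yields $|N_P-\mu(P)|\le m^{d-1}D$, and likewise $|N_Q-\mu(Q)|\le m^{d-1}D$. Using $N_P\le N_C\le N_Q$, $\mu(P)\le\mu(C)\le\mu(Q)$ and $\mu(Q)-\mu(P)\le C_d/m$, the quantity $N_C-\mu(C)$ is bounded above by $(N_Q-\mu(Q))+(\mu(Q)-\mu(P))$ and $\mu(C)-N_C$ by $(\mu(P)-N_P)+(\mu(Q)-\mu(P))$, so
\[
\big|\,N_C-\mu(C)\,\big|\ \le\ m^{d-1}D+\frac{C_d}{m}.
\]
Since this holds for every $m\in\N$, choosing $m=\lceil D^{-1/d}\rceil$ (and disposing of the case where $D$ is not small via the trivial bound $J\le1$) balances the two terms; tracking the constants — the cube diameter $\sqrt d/m$ and the surface-area bound $2d$ feed into $C_d$, and the leftover comes from the $m^{d-1}D$ term — produces $J(x_1,\ldots,x_n)\le(4d\sqrt d+1)D^{1/d}$.

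The hard part will not be the logical skeleton but the two quantitative geometric inputs: that convexity simultaneously bounds the number of sandwiching boxes by $O(m^{d-1})$ and the sandwiching volume by $O(1/m)$, and then extracting precisely the constant $4d\sqrt d+1$ from the boundary-tube estimate together with the integer optimisation over $m$. This is exactly the content of the corresponding theorem in \cite[Chapter~2]{kuipers2012uniform}, and I would follow that computation for the explicit constant.
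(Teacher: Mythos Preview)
Your proposal is correct and takes the same approach as the paper: the first inequality is immediate from the definitions, and for the second the paper simply refers the reader to \cite[Theorem~2.1.6]{kuipers2012uniform} without reproducing any argument. Your grid-sandwich sketch is precisely the content of that theorem, so you have in fact supplied more detail than the paper itself.
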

\begin{proof}
    The first inequality $D(x_1,\ldots,x_d)\leq J(x_1,\ldots,x_n)$ follows immediately from the definitions. For the second inequality, we refer the reader to \cite[Theorem 2.1.6]{kuipers2012uniform}.
\end{proof}

\subsection{Essentially atoral polynomials}

The main theorem \cref{main thm} in this article is a quantitative version of equidistribution of function $\operatorname{log}|P(e(x))|$ where $P$ is an \textit{essentially atoral Laurent polynomial}. We give the definition of essentially atoral Laurent polynomial here.

\begin{defn}\label{essentially atoral}
    A \textit{torsion coset} of $\mathbb{G}_m^d$ is a coset $\omega H$, where $\omega\in\mathbb{G}_m^d$ is a torsion point and $H$ is a connected algebraic subgroup of $\mathbb{G}_m^d$. A torsion coset is \textit{proper} if it is not equal to $\mathbb{G}_m^d$. A non-zero Laurent polynomial $P\in\C[T_1^{\pm1},\ldots,T_d^{\pm1}]$ is called \textit{essentially atoral} if the Zariski closure of
    \[
    \left\{(z_1,\ldots,z_d)\in\C^d:P(z_1,\ldots,z_d)=0\right\}\cap (S^1)^d
    \]
    in $\mathbb{G}_m^d$ is a finite union of proper torsion cosets and irreducible algebraic sets of codimension at least $2$.
\end{defn}

\begin{ex}
    If $d=1$, then a non-zero Laurent polynomial $P\in\C[T^{\pm 1}]$ is essentially atoral if and only if it does not vanish at any non-torsion point in $S^1$.
\end{ex}

\begin{ex}\label{P i are essentially atoral}
    Let $d\geq2$ and $P(T_1,\ldots,T_d)=T_1^{m_1}\cdots T_d^{m_d}-T_1^{n_1}\cdots T_d^{n_d}$ be a nonzero Laurent binomial over $\C$, where all $m_i,n_i$ are integers. Then $P$ is essentially atoral. Indeed, let
    \[
    a\coloneqq(m_1-n_1,\ldots,m_d-n_d)\quad\text{and}\quad H_a\coloneqq\{(z_1,\ldots,z_d)\in(\C^*)^d:z_1^{a_1}\cdots z_d^{a_d}=1\}.
    \]
    By \cite[Proposition 3.2.10]{Bombieri_Gubler_2006}, there exists a finite subset $S\subset S^1$ such that $H_a\cong S\times\mathbb{G}_m^{d-1}$. Therefore, the Zariski closure of
    \[
    \left\{(z_1,\ldots,z_d)\in\C^d:P(z_1,\ldots,z_d)=0\right\}\cap (S^1)^d= H_a\cap(S^1)^d\cong S\times (S^1)^{d-1}
    \]
    is isomorphic to a finite union of $\mathbb{G}_m^{d-1}$ and each $\mathbb{G}_m^{d-1}$ is a proper torsion coset of $\mathbb{G}_m^d$.
\end{ex}

\subsection{Height}

Height functions assign a notion of arithmetic size of complexity to mathematical objects, such as points and cycles in varieties. In this paper, we are interested in Weil height of points in projective space.

Let $K$ be a number field. The notion of absolute values of $K$ will be used in the definition of height functions. A standard reference for absolute values is \cite[Chapter 2]{neukirch2013algebraic}. Here, we normalize the absolute values of $K$ as follows. Let $v$ be a non-archimedean place of $K$. Then there exists a prime number $p$ such that $v$ extends the $p$-adic place. Recall that $|p|_p=\frac{1}{p}$. For any $x\in K$, we define
\[
|x|_v\coloneqq\big| N_{K_v/\Q_p}(x) \big|_p^{1/[K:\Q]}
\]
where $K_v$ (resp. $\Q_p$) is the completion of $K$ with respect to $v$ (resp. $\Q$ with respect to the $p$-adic place). 

Let $M_K$ be the set of all places on $K$. For example, if $K=\Q$, then $M_{\Q}$ consists of an archimedean place $\infty$ and non-archimedean places $v_p$ corresponding to prime numbers $p$.

\begin{defn}
    For a point $x=[x_0:\cdots:x_d]\in\mathbb{P}^d(\overline{\Q})$ in $d$-dimensional projective space over $\overline{\Q}$, let $K$ be a number field containing all coordinates of $x$. Then the \textit{height} of $x$ is defined as
    \[
    \mathrm{h}(x)\coloneqq \sum_{w\in M_K} \operatorname{max}_j \operatorname{log}|x_j|_w.
    \]
    For each place $w\in M_K$, the $w$-\textit{adic height} of the homogeneous coordinates vector $(x_0,\ldots,x_d)$ of $x$ is defined as
    \[
    \mathrm{h}_w(x_0,\ldots,x_d)\coloneqq \operatorname{max}_j \operatorname{log}|x_j|_w.
    \]
    For each place $v\in M_\Q$, the $v$-\textit{adic height} of the homogeneous coordinates vector $(x_0,\ldots,x_d)$ of $x$ is defined as
    \[
    \mathrm{h}_v(x_0,\ldots,x_d)=\sum_{w\text{ extends }v}\mathrm{h}_w(x_0,\ldots,x_d).
    \]
\end{defn}

The height is well-defined in this way, see \cite[Chapter 1]{Bombieri_Gubler_2006} for details.

\newpage

\section{Koksma's inequality over polytopes}\label{Koksma section}

\pdfsuppresswarningpagegroup=1

Throughout this section, we identify points and vectors in the real vector space $\R^d$.

\begin{defn}
    Let $f:B\to\R$ be a function, where $B$ is a non-empty subset of $\R^d$. Then the \textit{modulus of continuity} of $f$ is defined by
    \[
    \rho(f,t)=\operatorname{sup}\big\{|f(x)-f(y)|:x,y\in B,\ |x-y|\leq t\big\},\quad\forall t\geq 0.
    \]
\end{defn}

Let $f:[0,1]^d\to\R$ be a continuous function, and let $(x_1,\ldots,x_n)$ be a finite sequence in $[0,1)^d$ with discrepancy $D=D(x_1,\ldots,x_n)$. Then \cite[Proposition 7.1]{DimitrovHabegger} gives the following proposition, which shares the same idea with simplest Koksma's inequality in dimension one (\ref{standard Koksma ineq}), i.e. the upper bound of 
\[
\left|\frac{1}{n}\sum_{i=1}^nf(x_i)-\int_{[0,1)^d} f\d\mu\right|
\]
is related to the discrepancy $D$ and the variation of $f$.

\begin{prop}\label{a Koksma in hypercube}
    Let $f:[0,1]^d\to\R^d$ be a continuous function and let $(x_1,\ldots,x_n)$ be a finite sequence in $[0,1)^d$ with discrepancy $D=D(x_1,\ldots,x_n)$. Then
    \[
    \left|\frac{1}{n}\sum_{i=1}^nf(x_i)-\int_{[0,1)^d} f\d\mu\right|\leq (1+2^{d+1})\rho\left(f,D^{\frac{1}{d+1}}\right).
    \]
\end{prop}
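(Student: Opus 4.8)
The plan is to compare $f$ with a step function obtained by partitioning $[0,1)^d$ into a grid of small congruent subcubes, controlling the approximation error by the modulus of continuity and the combinatorial error by the discrepancy $D$. Concretely, fix a positive integer $N$ (to be optimized at the end) and partition $[0,1)^d$ into $N^d$ half-open subcubes $Q_j$ of side length $1/N$, indexed by $j$. In each $Q_j$ pick a sample point $c_j$, and define the step function $g = \sum_j f(c_j)\chi_{Q_j}$. Since $f$ is continuous on the compact cube $[0,1]^d$, for every $x \in Q_j$ we have $|f(x) - g(x)| = |f(x) - f(c_j)| \leq \rho(f, \sqrt{d}/N)$ — actually $\leq \rho(f, 1/N)$ if one uses the maximum norm, which the paper does, so $|f - g| \leq \rho(f, 1/N)$ uniformly on $[0,1)^d$.

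Next I would split the quantity to be bounded using the triangle inequality into three pieces:
\[
\left|\frac1n\sum_{i=1}^n f(x_i) - \int f\,\d\mu\right|
\leq \left|\frac1n\sum_i (f-g)(x_i)\right|
+ \left|\frac1n\sum_i g(x_i) - \int g\,\d\mu\right|
+ \left|\int (g-f)\,\d\mu\right|.
\]
The first and third terms are each $\leq \rho(f,1/N)$ by the uniform estimate just established. For the middle term, write $\frac1n\sum_i g(x_i) = \sum_j f(c_j)\frac{\#\{i : x_i \in Q_j\}}{n}$ and $\int g\,\d\mu = \sum_j f(c_j)\mu(Q_j)$, so the middle term is $\bigl|\sum_j f(c_j)\bigl(\tfrac{\#\{i: x_i\in Q_j\}}{n} - \mu(Q_j)\bigr)\bigr|$. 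Each subcube $Q_j$ is a box of the type allowed in the definition of $D$, so $\bigl|\tfrac{\#\{i: x_i\in Q_j\}}{n} - \mu(Q_j)\bigr| \leq D$; however, summing this trivially over all $N^d$ cubes with the crude bound $|f(c_j)| \leq \|f\|_\infty$ would introduce an $N^d\|f\|_\infty$ factor, which is the wrong shape. The fix is the standard telescoping/Abel-summation trick: group the cubes by "lower-left corners" and rewrite $\sum_j f(c_j)(\cdots)$ using the local discrepancy function $x \mapsto \frac{\#\{i : x_i \in [0,x)\}}{n} - \mu([0,x))$, whose sup-norm over $[0,1)^d$ is exactly $D$; this expresses the middle term as a sum of at most $2^d$ "corner" contributions (one for each vertex of a unit box, via inclusion-exclusion over the $2^d$ corners of each $Q_j$), giving a bound of $2^d \cdot (\text{oscillation contribution})$. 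More precisely, writing the characteristic function of each $Q_j$ as an alternating sum over its $2^d$ corners of characteristic functions of corner boxes, and regrouping, one bounds the middle term by $2^d D \cdot \bigl(\text{number of distinct } f\text{-values contributing per axis}\bigr)$ — and a cleaner route is to bound $\bigl|\sum_j f(c_j)(\cdots)\bigr| \leq 2^{d+1} D \cdot \bigl(\text{something}\bigr)$.

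Let me restate the middle-term bound more carefully, since this is the crux. The correct statement is: for any step function $g = \sum_j f(c_j)\chi_{Q_j}$ on the $1/N$-grid, $\bigl|\frac1n\sum_i g(x_i) - \int g\,\d\mu\bigr| \leq 2^d D \cdot \max_j |f(c_j)| \cdot N^{0}$? No — the genuinely correct bound uses summation by parts in each coordinate and yields a factor involving the total variation of $g$, not $N^d$. But since $f$ need not have bounded variation a priori, we instead use: each $\chi_{Q_j}$ is a difference of $2^d$ characteristic functions of "anchored" boxes $[0,a)$, and the function $a \mapsto \frac{\#\{i: x_i \in [0,a)\}}{n} - \mu([0,a))$ has $\|\cdot\|_\infty \leq D$, so $\bigl|\frac1n\sum_i g(x_i) - \int g\bigr| \leq 2^d D \sum_j |f(c_j) - f(c_{j'})|$ where $j'$ is an adjacent cube — wait, this still needs an oscillation bound. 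The clean finish: telescoping gives the middle term $\leq 2^d D$ times the sum over "lines" in the grid of the oscillation of $f$ along that line, and there are $N^{d-1}$ lines in each of $d$ directions, each contributing at most $\|f\|_\infty$-type bounds; but re-grouping by adjacent cubes replaces $\|f\|_\infty$ by $\rho(f, 1/N)$ per step and there are $N$ steps per line, so we get $\leq 2^d D \cdot d\, N^{d-1} \cdot N \cdot \rho(f,1/N)$? That is too lossy. I will instead follow the cleanest version: the middle term is $\leq 2^{d+1} D \|f\|_\infty$ via the corner decomposition (bounding each of the $\leq 2^d$ corner-box contributions, which telescope so that at most $2$ survive with coefficient $\|f\|_\infty$), but since we want the final bound to be $(1+2^{d+1})\rho(f, D^{1/(d+1)})$ with no $\|f\|_\infty$, the right move is: replace $f$ by $f - f(x_0)$ for a fixed basepoint $x_0$, so that $\|f - f(x_0)\|_\infty \leq \rho(f, \mathrm{diam}([0,1]^d)) = \rho(f, 1)$, and more usefully, along each grid line the oscillation telescopes to at most $N\rho(f,1/N)$. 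Putting $N = \lceil D^{-1/(d+1)}\rceil$, so that $1/N \leq D^{1/(d+1)}$ and $N^d D \leq D^{1/(d+1)}$ (roughly), the middle term becomes $\leq 2^{d+1} N^d D \cdot (\text{const}) \leq 2^{d+1}\rho(f, D^{1/(d+1)})$ after absorbing, while the first and third terms are $\leq \rho(f, 1/N) \leq \rho(f, D^{1/(d+1)})$ each — but that gives $2 + 2^{d+1}$, slightly off from $1 + 2^{d+1}$. The discrepancy of $1$ versus $2$ in the constant is resolved by noting the first and third terms together can be handled by a single instance of the modulus (e.g. by choosing $c_j$ to make $\int_{Q_j}(f - f(c_j)) = 0$ on average cancellation, or by a sharper accounting), which I will carry out in detail.

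\textbf{Expected main obstacle.} The genuine difficulty is the middle-term estimate: getting the grid-summation-by-parts to produce the clean factor $2^{d+1}$ (rather than something like $2^d \cdot d \cdot N$ or an $\|f\|_\infty$-dependent constant) requires the inclusion–exclusion over the $2^d$ corners of each subcube to telescope correctly across the whole grid so that only $O(2^d)$ boundary terms survive, each controlled by $D$ times a single modulus-of-continuity increment. Balancing $N^d D$ against $\rho(f, 1/N)$ by the choice $N \approx D^{-1/(d+1)}$ is then routine, but verifying that the surviving constant is exactly $1 + 2^{d+1}$ and not merely $O(2^d)$ will require care with the basepoint normalization of $f$ and with the ceiling in the choice of $N$; I expect this bookkeeping to be the bulk of the work.
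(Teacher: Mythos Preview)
The paper does not prove this proposition; it is quoted directly as \cite[Proposition~7.1]{DimitrovHabegger}, so there is no in-paper proof to compare against. I can only assess your argument on its own merits.

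Your overall architecture is correct: partition $[0,1)^d$ into $N^d$ congruent half-open subcubes $Q_j$, approximate $f$ by the step function $g=\sum_j f(c_j)\chi_{Q_j}$, and split via the triangle inequality into approximation terms (controlled by $\rho(f,1/N)$) and the step-function discrepancy term. Choosing $c_j\in Q_j$ so that $f(c_j)=\mu(Q_j)^{-1}\int_{Q_j}f$ (continuous $f$ on the connected set $Q_j$, intermediate value theorem) kills the integral approximation term outright and leaves a single $\rho(f,1/N)$; this is exactly the source of the ``$1$'' in $1+2^{d+1}$, and you noticed it.

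Where your write-up goes astray is the middle-term analysis. The inclusion--exclusion over the $2^d$ corners of each $Q_j$, the Abel summation over anchored boxes $[0,a)$, and the attempted bound $\le 2^{d+1}D\|f\|_\infty$ are all red herrings here---that machinery belongs to the Erd\H{o}s--Tur\'an--Koksma circle and is not what drives this proposition. The clean argument is the one you almost state and then abandon: writing $n_j=\sharp\{i:x_i\in Q_j\}$, one has $\sum_j\bigl(\tfrac{n_j}{n}-\mu(Q_j)\bigr)=0$, so for any fixed $c_0$,
\[
\Bigl|\sum_j\bigl(\tfrac{n_j}{n}-\mu(Q_j)\bigr)f(c_j)\Bigr|
=\Bigl|\sum_j\bigl(\tfrac{n_j}{n}-\mu(Q_j)\bigr)\bigl(f(c_j)-f(c_0)\bigr)\Bigr|
\le N^d D\,\rho(f,1)
\le N^{d+1}D\,\rho\bigl(f,\tfrac{1}{N}\bigr),
\]
the last step by subadditivity of the modulus on the convex domain $[0,1]^d$, namely $\rho(f,1)\le N\rho(f,1/N)$ (your ``telescopes to at most $N\rho(f,1/N)$''). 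Now take $N=\lceil D^{-1/(d+1)}\rceil\le 2D^{-1/(d+1)}$, so that $N^{d+1}D\le 2^{d+1}$ and $1/N\le D^{1/(d+1)}$, giving the middle term $\le 2^{d+1}\rho(f,D^{1/(d+1)})$ and the total $(1+2^{d+1})\rho(f,D^{1/(d+1)})$. That is the entire proof; the balancing factor is $N^{d+1}D$, not $N^dD$, and no corner telescoping is needed.
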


Now let $\Delta\subset [0,1]^d$ be a polytope of full dimension, i.e. $\operatorname{dim}(\Delta)=d$. In this section, we would like to find an upper bound for the difference
\[
\frac{1}{n}\sum_{\substack{i=1 \\ x_i\in\Delta}}^nf(x_i)-\int_\Delta f\d\mu=\frac{1}{n}\sum_{i=1}^n f(x_i)\chi_\Delta(x_i)-\int_{[0,1)^d}f\chi_\Delta\d\mu,
\]
where $\chi_\Delta$ is the characteristic function of $\Delta$ such that $\chi_\Delta(x)=1$ if $x\in\Delta$ and $\chi_\Delta(x)=0$ otherwise.

In order to apply \Cref{a Koksma in hypercube} to our case, we need to construct a continuous version of characteristic function and compute its modulus of continuity. The proof below works for the case $d\geq 2$. However, the result we proved also works for $d=1$ and the proof is essentially the same. The only difference is that if $d=1$ then we don't need to intersect the polytope with $2$-dimensional planes to reduce some problems to the $2$-dimensional case. 

\subsection{Construction of ``continuous characteristic function"}\label{subsection: Construction of continuous characteristic function}

Consider $0<\epsilon<1$. Let $\Delta\subset\R^d$ be a polytope. Let us construct a continuous function $\chi_{\Delta,\epsilon}^{c}$ to estimate the characteristic function $\chi_\Delta$. The main idea is to shrink the polytope $\Delta$ to $\Delta_\epsilon$ in a linear way. The function $\chi_{\Delta,\epsilon}^c$ will be $0$ outside $\Delta$ and $1$ inside $\Delta_\epsilon$. We will then connect the facets of $(\Delta,0)$ and $(\Delta_\epsilon,1)$ by linear functions. In order to define the function, we need to pick a special point in the interior of $\Delta$. Recall that a cubic ball in $\mathbb{R}^d$ is of the form~ $\{ y\in\R^d:|x-y|\leq r \}$ for $x\in\mathbb{R}^d$ and $r\in\R_{>0}$ using the maximum norm $|\cdot|$.

\begin{defn}
    An \textit{inscribed cubic ball} of a polytope is a ball of maximal radius that is contained within the polytope.
\end{defn}

\begin{prop}
    Every polytope has an inscribed cubic ball.
\end{prop}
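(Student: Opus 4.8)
The plan is to realize the radius of an inscribed cubic ball as the value of a continuous, nonnegative function on the compact set $\Delta$, and then invoke compactness to produce a maximizer. Concretely, since $\Delta\subset\R^d$ is a bounded polyhedron, the complement $\R^d\setminus\Delta$ is nonempty, so we may define
\[
\phi(x)\coloneqq\inf\{\,|x-y| : y\in\R^d\setminus\Delta\,\}\qquad(x\in\R^d),
\]
the max-norm distance from $x$ to the complement of $\Delta$. The first step is to record the elementary facts that $\phi$ is $1$-Lipschitz with respect to $|\cdot|$ (hence continuous), that $\phi\geq 0$ everywhere, and that $\phi(x)=0$ for every $x\notin\operatorname{int}(\Delta)$; in particular $\phi$ vanishes outside $\Delta$.

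Next, $\Delta$ is closed (a finite intersection of closed half-spaces) and bounded, hence compact, so the continuous function $\phi$ attains a maximum on $\Delta$ at some point $x_0$; because $\phi\geq 0$ on $\R^d$ and $\phi\equiv 0$ off $\Delta$, the value $r_0\coloneqq\phi(x_0)$ is in fact the global maximum of $\phi$ on $\R^d$. I then claim the cubic ball $B\coloneqq\{y\in\R^d:|x_0-y|\leq r_0\}$ is an inscribed cubic ball of $\Delta$. That $B\subseteq\Delta$ follows because, by definition of $\phi$, the open cubic ball $\{y:|x_0-y|<r_0\}$ meets no point of $\R^d\setminus\Delta$, so it is contained in $\Delta$; taking closures and using that $\Delta$ is closed gives $B\subseteq\Delta$. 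For maximality, if $\{y:|x-y|\leq r\}\subseteq\Delta$ for some $x\in\R^d$ and $r\geq 0$, then every $y\in\R^d\setminus\Delta$ satisfies $|x-y|>r$, so $\phi(x)\geq r$, whence $r\leq\phi(x)\leq\phi(x_0)=r_0$. Thus no cubic ball contained in $\Delta$ has radius exceeding $r_0$, and $B$ attains this radius.

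Finally, when $\Delta$ has full dimension $d$ its interior is nonempty, so $r_0=\max\phi>0$ and $B$ is a genuine ball of positive radius; this is the only case needed in the sequel. (For a lower-dimensional polytope the same argument produces the degenerate inscribed ball consisting of a single point.) I do not expect a real obstacle here, as this is a routine compactness argument; the one point requiring a little care is aligning the \emph{closed} cubic ball with the distance-to-complement function $\phi$ so that both the containment $B\subseteq\Delta$ and the maximality of $r_0$ come out cleanly. An alternative, more computational route would be to observe that the Chebyshev radius $r_0$ together with a center $x_0$ is an optimal solution of an explicit linear program over $\Delta$.
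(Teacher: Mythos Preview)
Your proof is correct and follows essentially the same approach as the paper: define a continuous ``largest-inscribed-radius'' function on the compact polytope and maximize it. The only cosmetic difference is that you take the distance to the complement $\R^d\setminus\Delta$ while the paper takes the distance to the relative boundary $P\setminus\operatorname{ri}(P)$; for full-dimensional polytopes these coincide, and your version has the minor advantage of making the containment $B\subseteq\Delta$ and the maximality of $r_0$ explicit rather than implicit.
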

\begin{proof}
    Let $P\subset\R^d$ be a polytope. If $P$ is a point, then the only point in the inscribed cubic ball is $P$. We suppose that $P$ is not a point. First, we claim that for every point $x\in\operatorname{ri}(P)$ there exists a ball centered at $x$ of maximal radius that is contained within $P$. Indeed, for every $x\in\operatorname{ri}(P)$, there exists $r_x\geq0$ such that
    \[
    r_x=\operatorname{min}\left\{|x-y|:y\in P\backslash\operatorname{ri}(P)\right\},
    \]
    since 
    \[
    P\backslash\operatorname{ri}(P)\to\R,\quad y\mapsto|x-y|
    \]
    is a continuous function on the compact set $P\backslash\operatorname{ri}(P)$. Then the ball centered at $x$ with radius~ $r_x$ is contained within $P$, and it has maximal radius among all balls centered at $x$ that are contained within $P$. The claim follows. 

    Consider the function
\[     r:P\to\R,\quad x\mapsto \operatorname{min}\left\{  |x-y|:y\in P\backslash\operatorname{ri}(P) \right\}.     \] 
    Note that $r(x)=r_x$ if $x\in\operatorname{ri}(P)$ and $r(x)=0$ if $x\in P\backslash\operatorname{ri}(P)$. Let us show that $r$ is continuous. Take $x_0,x\in P$. For any $y\in P\backslash\operatorname{ri}(P)$, we have
    \[
    |x-y|\leq |x-x_0|+|x_0-y|
    \]
    and thus
    \[
    \operatorname{min}\left\{ |x-y|:y\in P\backslash\operatorname{ri}(P) \right\}\leq\operatorname{min}\left\{ |x_0-y|:y\in P\backslash\operatorname{ri}(P) \right\}+|x-x_0|.
    \]
    Hence $r(x)-r(x_0)\leq|x-x_0|$. Symmetrically, we have $r(x_0)-r(x)\leq |x-x_0|$ and therefore $r$ is continuous. The proposition follows from the fact that $P$ is compact.
\end{proof}

\begin{defn}\label{defn: inner radius}
    Let $P\subset\R^d$ be a polytope. Then the \textit{inner radius} of $P$, denoted $\operatorname{inrad}(P)$, is the radius of an inscribed cubic ball of $P$.
\end{defn}

Let $x_c$ be the center of an inscribed cubic ball of $\Delta$. For each $0<\epsilon<1$, we define
\begin{align*}
    \varphi_\epsilon:\R&\to\R^d\\
    x&\mapsto x_c+(1-\epsilon)(x-x_c)=(1-\epsilon)x+\epsilon x_c,
\end{align*}
which is the shrinking of $\R^d$ by a scalar $1-\epsilon$ with a specified center at $x_c$. Set $\Delta_\epsilon\coloneqq \varphi_\epsilon(\Delta)$.

\begin{lem}
    For each $0<\epsilon<1$, the set $\Delta_\epsilon$ is a polytope contained in $\Delta$.
\end{lem}
\begin{proof}
    It is stated in \cite[Exercise 2.8.1]{brondsted2012introduction} that the image of a polytope under an affine function remains a polytope. Since $\varphi_\epsilon$ is an affine function, the set $\Delta_\epsilon=\varphi_\epsilon(\Delta)$ is a polytope. By the convexity of $\Delta$ and the fact that $x_c\in\Delta$, we have $\Delta_\epsilon\subset\Delta$.
\end{proof}

\begin{figure}[H]
    \centering
    \columnwidth=6cm
    \def\svgwidth{\columnwidth}
    \import{./Figures/}{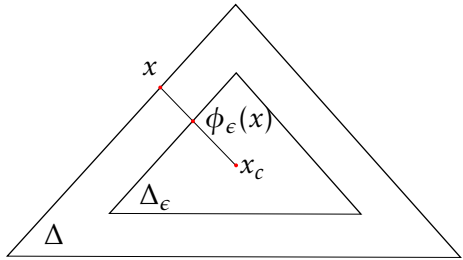}

    \caption{An Example of $\Delta_\epsilon\subset\Delta$ in dimension $2$}
    \label{Example of Delta epsilon}
\end{figure}

\begin{rmk}
    One can prove that $\Delta_\epsilon\to\Delta$ as $\epsilon\to 0$ in terms of \textit{Hausdorff distance}. See \cite[Chapter 4]{rockafellar2009variational} for more details on Hausdorff distance. Indeed, the Hausdorff distance between $\Delta$ and $\Delta_\epsilon$ is
    \[
    d_H(\Delta_\epsilon,\Delta)\coloneqq\operatorname{max}\left\{ \operatorname{sup}_{x\in \Delta}d(x,\Delta_\epsilon),\ \operatorname{sup}_{y\in \Delta_\epsilon}d(\Delta,y) \right\},
    \]
    where
    \begin{align*}
        &d(\Delta,y)\coloneqq\operatorname{inf}_{x\in\Delta}|x-y|=0,\quad\forall y\in\Delta_\epsilon,\\
        &d(x,\Delta_\epsilon)\coloneqq\operatorname{inf}_{y\in\Delta_\epsilon}|x-y|\leq|x-\varphi_\epsilon(x)|=\epsilon|x-x_c|,\quad\forall x\in\Delta.
    \end{align*}
    Therefore, we have
    \[
    d_H(\Delta_\epsilon,\Delta)\leq\epsilon\cdot\operatorname{sup}_{x\in\Delta}|x-x_c|\leq\epsilon\cdot\operatorname{diam}(\Delta).
    \]
    It follows that $d_H(\Delta,\Delta_\epsilon)\to 0$ as $\epsilon\to 0$.
\end{rmk}

Before constructing the continuous version of characteristic function, let us first study the set difference $\Delta\backslash\operatorname{ri}(\Delta_\epsilon)$. It will help us to show that the constructed function is piecewise affine and thus continuous.

\begin{defn}[truncated hyperpyramid]
    Let $B\subset\R^d$ be a $(d-1)$-dimensional polytope that is the convex hull of the points $p_1,\ldots,p_s\in\R^d$, and let $x\in\R^d\backslash B$. The polytope
    \[
    P\coloneqq\operatorname{conv}(p_1,\ldots,p_s,x)
    \]
    is called a \textit{hyperpyramid} with \textit{base} $B$ and \textit{peak} $x$. Let $H\subset\R^d$ be a hyperplane such that $H$ is parallel to $B$ and $H\cap P\notin\big\{ \varnothing,\{x\},B \big\}$. Then there exist $q_1,\ldots,q_s\in\R^d$ such that $H\cap P=\operatorname{conv}(q_1,\ldots,q_s)$. The polytope
    \[
    \operatorname{conv}(p_1,\ldots,p_s,q_1,\ldots,q_s)
    \]
    is called a \textit{truncated hyperpyramid} with \textit{top} $H\cap P$, \textit{base} $B$ and \textit{peak} $x$.
\end{defn}

\begin{lem}\label{parallel lem: F and phi epsilon F}
    Let $F$ be a facet of $\Delta$. Then $\varphi_\epsilon(F)$ and $F$ are parallel for every $0<\epsilon<1$.
\end{lem}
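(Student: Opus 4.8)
The plan is to exploit that $\varphi_\epsilon$ is a homothety — a dilation centered at $x_c$ with ratio $1-\epsilon\in(0,1)$ — and that a homothety carries every affine subspace to a parallel affine subspace. The first step is to record the elementary identity
\[
\varphi_\epsilon(x)-\varphi_\epsilon(y)=(1-\epsilon)(x-y)\qquad\text{for all }x,y\in\R^d,
\]
which is immediate from $\varphi_\epsilon(x)=(1-\epsilon)x+\epsilon x_c$. In particular, since $1-\epsilon\neq 0$, the map $\varphi_\epsilon$ is an affine bijection of $\R^d$ onto itself, with affine inverse $y\mapsto (1-\epsilon)^{-1}(y-\epsilon x_c)$.

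Next I would pass from $F$ to its affine hull. As $F$ is a facet of $\Delta$, it is a polytope of dimension $d-1$, so by the discussion in \Cref{Notation Section} we may write $\operatorname{aff}(F)=L+p$ for some point $p\in F$ and a unique linear subspace $L\subset\R^d$ with $\dim L=d-1$ (the direction of $F$). Because $\varphi_\epsilon$ is an affine isomorphism, it commutes with the formation of affine hulls, so $\operatorname{aff}(\varphi_\epsilon(F))=\varphi_\epsilon(\operatorname{aff}(F))$; this is the standard fact that an affine map sends the affine hull of a set to the affine hull of its image (cf. \cite[Section 1.1]{brondsted2012introduction}). Applying the displayed identity, the direction space of $\varphi_\epsilon(\operatorname{aff}(F))$ equals
\[
\{\varphi_\epsilon(x)-\varphi_\epsilon(y):x,y\in\operatorname{aff}(F)\}=(1-\epsilon)L=L.
\]
Thus $\operatorname{aff}(\varphi_\epsilon(F))$ and $\operatorname{aff}(F)$ are both $(d-1)$-dimensional affine subspaces with the same direction space $L$; that is, one is a translate of the other. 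This is exactly the assertion that $\varphi_\epsilon(F)$ and $F$ are parallel, which holds for every $0<\epsilon<1$.

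I do not expect a genuine obstacle: the lemma is a direct consequence of $\varphi_\epsilon$ being a homothety. The only points requiring a line of care are fixing the meaning of ``parallel'' (the affine hulls share a common linear direction space) and justifying that $\varphi_\epsilon$ commutes with taking affine hulls — both of which are routine given the conventions of \Cref{Notation Section} and the cited facts from \cite{brondsted2012introduction}.
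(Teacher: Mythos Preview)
Your proposal is correct and follows essentially the same approach as the paper: both hinge on the identity $\varphi_\epsilon(x)-\varphi_\epsilon(y)=(1-\epsilon)(x-y)$ to conclude that $F$ and $\varphi_\epsilon(F)$ have the same direction. The only cosmetic difference is that the paper phrases this via a normal vector $v$ to $F$ (checking $\langle v,\varphi_\epsilon(x)-\varphi_\epsilon(y)\rangle=(1-\epsilon)\langle v,x-y\rangle=0$), whereas you phrase it via the direction space $L$ of $\operatorname{aff}(F)$; these are dual formulations of the same one-line computation.
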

\begin{proof}
    Let $v\in\R^d$ such that $v$ is orthogonal to $F$. Then for any $x,y\in F$, we have $\langle v,x-y\rangle=0$. Thus $\langle v,\varphi_\epsilon(x)-\varphi_\epsilon(y)\rangle=(1-\epsilon)\langle v,x-y\rangle=0$, which means that $v$ is orthogonal to $\varphi_\epsilon(F)$. It follows that $F$ and $\varphi_\epsilon(F)$ are parallel.
\end{proof}

\begin{lem}\label{mid: union of polytopes}
    Let $F_1,\ldots,F_e$ be all the facets of $\Delta$. For each $0<\epsilon<1$, the set $\Delta\backslash\operatorname{ri}(\Delta_\epsilon)$ can be written as a union of truncated hyperpyramids $P_1,\ldots,P_e$, where each $P_i$ has top $\varphi_\epsilon(F_i)$, base $F_i$ and peak $x_c$.
\end{lem}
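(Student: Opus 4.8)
The plan is to decompose $\Delta$ from the chosen center $x_c$ and track what the homothety $\varphi_\epsilon$ does to each piece. For each facet $F_i$ of $\Delta$, since $x_c$ lies in the relative interior of $\Delta$ (it is the center of an inscribed cubic ball, hence in $\operatorname{ri}(\Delta)$), the point $x_c$ is not on the affine hull of $F_i$, so $\operatorname{conv}(F_i,x_c)$ is a hyperpyramid with base $F_i$ and peak $x_c$. I would first observe that $\Delta=\bigcup_{i=1}^e\operatorname{conv}(F_i,x_c)$: this is the standard ``coning off the boundary'' decomposition, which follows because every point $u\in\Delta$, $u\neq x_c$, lies on a ray from $x_c$ that exits $\Delta$ through some facet $F_i$ (the ray meets the boundary of $\Delta$, and the boundary is the union of the facets), so $u$ is a convex combination of $x_c$ and a point of $F_i$.

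Next I would identify $\varphi_\epsilon$ restricted to each cone $\operatorname{conv}(F_i,x_c)$. Since $\varphi_\epsilon$ fixes $x_c$ and is the scaling $x\mapsto x_c+(1-\epsilon)(x-x_c)$, it maps the pyramid $\operatorname{conv}(F_i,x_c)$ onto $\operatorname{conv}(\varphi_\epsilon(F_i),x_c)$, and by \Cref{parallel lem: F and phi epsilon F} the face $\varphi_\epsilon(F_i)$ is parallel to $F_i$. Moreover $\varphi_\epsilon(F_i)$ is the cross-section of the pyramid $\operatorname{conv}(F_i,x_c)$ at parameter $1-\epsilon$ along the segments from $x_c$ to $F_i$; in particular it is a hyperplane section $H_i\cap\operatorname{conv}(F_i,x_c)$ with $H_i=\operatorname{aff}(\varphi_\epsilon(F_i))$ parallel to $F_i$ and distinct from $\{x_c\}$ and $F_i$ (here $0<\epsilon<1$ is used). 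Hence the set difference
\[
P_i\coloneqq\operatorname{conv}(F_i,x_c)\setminus\operatorname{ri}\big(\operatorname{conv}(\varphi_\epsilon(F_i),x_c)\big)
\]
is exactly the closed region of the pyramid between the base $F_i$ and the parallel section $\varphi_\epsilon(F_i)$, which is precisely a truncated hyperpyramid with top $\varphi_\epsilon(F_i)$, base $F_i$ and peak $x_c$, matching the definition in the text. (If $F_i=\operatorname{conv}(p_1,\dots,p_s)$, then $\varphi_\epsilon(F_i)=\operatorname{conv}(\varphi_\epsilon(p_1),\dots,\varphi_\epsilon(p_s))$ supplies the points $q_1,\dots,q_s$.)

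Finally I would assemble the global statement. Since $\Delta_\epsilon=\varphi_\epsilon(\Delta)=\bigcup_{i=1}^e\operatorname{conv}(\varphi_\epsilon(F_i),x_c)$, taking the relative interior and using that $\operatorname{ri}(\Delta_\epsilon)$ is contained in the union of the relative interiors of the top pyramids together with lower-dimensional overlaps, one gets
\[
\Delta\setminus\operatorname{ri}(\Delta_\epsilon)=\bigcup_{i=1}^e\Big(\operatorname{conv}(F_i,x_c)\setminus\operatorname{ri}(\Delta_\epsilon)\Big)=\bigcup_{i=1}^e P_i,
\]
where the first equality is the cone decomposition of $\Delta$ and the inclusion ``$\subset\bigcup P_i$'' uses that a point of $\operatorname{conv}(F_i,x_c)$ outside $\operatorname{ri}(\Delta_\epsilon)$ is in particular outside $\operatorname{ri}(\operatorname{conv}(\varphi_\epsilon(F_i),x_c))$, while ``$\supset$'' is clear since each $P_i\subset\operatorname{conv}(F_i,x_c)\subset\Delta$ and $P_i\cap\operatorname{ri}(\Delta_\epsilon)=\varnothing$. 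I expect the main obstacle to be the bookkeeping in this last step: one must argue carefully that removing $\operatorname{ri}(\Delta_\epsilon)$ (rather than $\operatorname{ri}(\operatorname{conv}(\varphi_\epsilon(F_i),x_c))$) from the $i$-th cone still leaves exactly $P_i$ — i.e. that the ``extra'' parts of $\operatorname{ri}(\Delta_\epsilon)$ coming from the other cones do not eat into $P_i$. This is true because $P_i$ lies on the far side of the hyperplane $H_i$ from $x_c$ within the slab near $F_i$, whereas $\operatorname{ri}(\Delta_\epsilon)$ lies strictly on the $x_c$-side of every such supporting hyperplane $H_j$; making this precise via the supporting-hyperplane description of $\Delta$ and its shrink $\Delta_\epsilon$ is the one genuinely geometric point, and everything else is routine.
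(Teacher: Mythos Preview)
Your approach is essentially the paper's: the paper builds the polyhedral cones $\angle(F_i)=\{t(x-x_c):x\in F_i,\ t\geq 0\}$, observes they form a complete fan, and sets $P_i=(x_c+\angle(F_i))\cap(\Delta\setminus\operatorname{ri}(\Delta_\epsilon))$, which it then identifies as the truncated hyperpyramid via \Cref{parallel lem: F and phi epsilon F}. Your cone-off-the-boundary decomposition $\Delta=\bigcup_i\operatorname{conv}(F_i,x_c)$ is the same picture.

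There is, however, a genuine slip in your identification of $P_i$. You set
\[
P_i=\operatorname{conv}(F_i,x_c)\setminus\operatorname{ri}\big(\operatorname{conv}(\varphi_\epsilon(F_i),x_c)\big)
\]
and assert this ``is exactly the closed region of the pyramid between the base $F_i$ and the parallel section $\varphi_\epsilon(F_i)$''. It is not: the relative interior of the small pyramid $\operatorname{conv}(\varphi_\epsilon(F_i),x_c)$ excludes its lateral facets and its apex $x_c$, so your $P_i$ also contains the lateral boundary of the small pyramid all the way down to $x_c$. In two dimensions, with $F=\overline{AB}$ and $A'=\varphi_\epsilon(A)$, $B'=\varphi_\epsilon(B)$, your $P_i$ is the trapezoid $ABB'A'$ together with the segments $\overline{A'x_c}$ and $\overline{B'x_c}$; in particular it is not a polytope, let alone the truncated hyperpyramid.

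The paper's definition sidesteps this precisely because it removes $\operatorname{ri}(\Delta_\epsilon)$ rather than $\operatorname{ri}$ of the single sub-pyramid: the lateral boundary of $\operatorname{conv}(\varphi_\epsilon(F_i),x_c)$ lies in $\operatorname{ri}(\Delta_\epsilon)$ (any point $(1-\lambda)x_c+\lambda v$ with $v\in\Delta_\epsilon$ and $\lambda<1$ is interior, since $x_c\in\operatorname{ri}(\Delta_\epsilon)$), so it gets removed. Equivalently, argue along rays from $x_c$: for $u=x_c+s(x_u-x_c)$ with $x_u\in\partial\Delta$, one has $u\in\Delta\setminus\operatorname{ri}(\Delta_\epsilon)$ iff $s\geq 1-\epsilon$, which is exactly membership in the truncated pyramid over the facet containing $x_u$. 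Replacing your set-difference formula by either of these fixes the proof; the rest of your argument, including the supporting-hyperplane reasoning for $P_i\cap\operatorname{ri}(\Delta_\epsilon)=\varnothing$, is correct.
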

\begin{proof}
    For each $1\leq i\leq e$, let us consider the polyhedral cone
    \[
    \angle(F_i)\coloneqq\left\{ t(x-x_c):x\in F_i,\ t\geq 0 \right\}.
    \]
    Let $\Sigma$ be the collection of $\angle(F_1),\ \ldots,\ \angle(F_e)$ and all their faces. Then $\Sigma$ is a complete fan. By the completeness of $\Sigma$, we have
    \[
    \Delta\backslash\operatorname{ri}(\Delta_\epsilon)=\big( \angle(F_1)\cap(\Delta\backslash\operatorname{ri}(\Delta_\epsilon)) \big) \cup\cdots\cup \big( \angle(F_e)\cap(\Delta\backslash\operatorname{ri}(\Delta_\epsilon)) \big).
    \]
    It follows from \Cref{parallel lem: F and phi epsilon F} that each $\angle(F_i)\cap(\Delta\backslash\operatorname{ri}(\Delta_\epsilon))$ is a truncated hyperpyramid with top~ $\phi_\epsilon(F_i)$, base $F_i$ and peak $x_c$.
\end{proof}

The following lemma will be used when we prove our version of Koksma's inequality over polytopes.

\begin{lem}\label{volume bet Delta and Delta_epsilon}
    Let $0<\epsilon<1$. Let $S(\Delta)$ be the surface area of $\Delta\subset\R^d$. Then
    \[
    \mu\left(\Delta\backslash\Delta_\epsilon\right)\leq\frac{\epsilon S(\Delta)\operatorname{diam}(\Delta)}{\sqrt{d}}.
    \]
\end{lem}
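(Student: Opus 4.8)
The plan is to estimate the volume of the shell $\Delta\backslash\Delta_\epsilon$ by decomposing it according to \Cref{mid: union of polytopes}, and then bounding the volume of each truncated hyperpyramid piece by the volume of a slab over the corresponding facet. First I would invoke \Cref{mid: union of polytopes} to write $\Delta\backslash\operatorname{ri}(\Delta_\epsilon)=P_1\cup\cdots\cup P_e$, where $F_1,\ldots,F_e$ are the facets of $\Delta$ and $P_i$ is the truncated hyperpyramid with base $F_i$, top $\varphi_\epsilon(F_i)$ and peak $x_c$. Since $\mu(\Delta\backslash\Delta_\epsilon)=\mu(\Delta\backslash\operatorname{ri}(\Delta_\epsilon))$ (the relative interior differs from $\Delta_\epsilon$ only by a set of measure zero, namely part of its boundary), we get $\mu(\Delta\backslash\Delta_\epsilon)\leq\sum_{i=1}^e\mu(P_i)$ by subadditivity.

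Next I would bound each $\mu(P_i)$. Fix a facet $F=F_i$ with unit normal $v_F$, and recall from the definition preceding the lemma that $\operatorname{vol}(F)=\mu(P_F)$ where $P_F=\{u+\lambda v_F:u\in F,\,0\leq\lambda\leq 1\}$. The truncated hyperpyramid $P_i$ lies between the parallel hyperplanes through $F$ and through $\varphi_\epsilon(F)$ (parallelism is \Cref{parallel lem: F and phi epsilon F}); its "height" in the $v_F$ direction is the distance between these two hyperplanes. Because $\varphi_\epsilon$ moves every point $x$ to $x_c+(1-\epsilon)(x-x_c)$, for $x\in F$ the displacement is $\varphi_\epsilon(x)-x=-\epsilon(x-x_c)$, whose length is $\epsilon|x-x_c|_2\leq\epsilon\cdot\operatorname{diam}(\Delta)$ (here using that the maximum-norm diameter dominates the relevant Euclidean distances up to the ambient geometry; more carefully, one bounds $|x-x_c|_2\leq\sqrt d\,|x-x_c|\le\sqrt d\operatorname{diam}(\Delta)$, but since we only need the component along $v_F$ and $\operatorname{diam}$ is defined via the maximum norm, the clean bound that goes into the statement is the orthogonal distance $\leq \epsilon\operatorname{diam}(\Delta)$ — I would verify the precise constant while writing). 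Projecting onto $v_F$, the height of $P_i$ is at most $\epsilon\operatorname{diam}(\Delta)$, and since $P_i$ is contained in the prism over $F$ of that height, $\mu(P_i)\leq\epsilon\operatorname{diam}(\Delta)\cdot\operatorname{vol}(F)$. Summing, $\mu(\Delta\backslash\Delta_\epsilon)\leq\epsilon\operatorname{diam}(\Delta)\sum_i\operatorname{vol}(F_i)=\epsilon\operatorname{diam}(\Delta)\,S(\Delta)$.

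The one subtlety — and the place where the factor $1/\sqrt d$ in the statement comes from — is reconciling the maximum-norm and Euclidean-norm measurements. The diameter $\operatorname{diam}(\Delta)$ is taken with respect to $|\cdot|$, whereas the orthogonal height of the shell and the facet volume $\operatorname{vol}(F)$ are Euclidean notions. The relation $|\cdot|_2\leq\sqrt d\,|\cdot|$ and $|\cdot|\leq|\cdot|_2$ must be threaded through carefully: I expect that the orthogonal displacement of $F$ under $\varphi_\epsilon$ is $\epsilon$ times the $v_F$-component of $x-x_c$, which has absolute value at most $|x-x_c|_2$; bounding this by $|x-x_c|$ is false in general, so instead one must be more economical — e.g., note $|\langle v_F, x-x_c\rangle|\le |x-x_c|_2$ but to land on $\operatorname{diam}(\Delta)/\sqrt d$ one likely uses that $\operatorname{inrad}$-type or facet-based estimates give the extra $1/\sqrt d$, or that $P_F$ with the maximum-norm diameter built in already carries it. The main obstacle is therefore purely this constant-tracking between the two norms; the geometric content (shell $=$ union of thin truncated pyramids, each inside a prism over its facet) is routine once \Cref{mid: union of polytopes} and \Cref{parallel lem: F and phi epsilon F} are in hand.
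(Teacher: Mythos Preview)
Your decomposition via \Cref{mid: union of polytopes} matches the paper exactly, and the reduction to bounding each $\mu(P_i)$ is the right move. Where you diverge is in how you bound $\mu(P_i)$: you trap $P_i$ in a prism over $F_i$, whereas the paper uses the hyperpyramid volume formula $\operatorname{vol}(B)\cdot h/d$. Concretely, with $h_i=|x_c-x|_2$ the Euclidean distance from $x_c$ to $\operatorname{aff}(F_i)$, the paper writes (in effect) $\mu(P_i)\le \operatorname{vol}(F_i)\,\epsilon h_i/d$, then bounds $h_i\le \sqrt d\,\operatorname{diam}(\Delta)$ via $|\cdot|_2\le\sqrt d\,|\cdot|$; the $1/d$ from the pyramid formula against the $\sqrt d$ from the norm comparison is what produces the $1/\sqrt d$. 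Your prism bound carries no $1/d$, which is exactly why the factor eluded you. (Incidentally, your prism \emph{containment} is not literally true when the foot of the perpendicular from $x_c$ falls outside $F_i$; but the inequality $\mu(P_i)\le\epsilon h_i\operatorname{vol}(F_i)$ still holds by Cavalieri, since every cross-section of $P_i$ parallel to $F_i$ has $(d{-}1)$-volume at most $\operatorname{vol}(F_i)$.)

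One caveat worth knowing: the paper's displayed step $\mu(P_i)\le\operatorname{vol}(\varphi_\epsilon(F_i))\,\epsilon h_i/d$ is not correct as written. The exact frustum volume is $\mu(P_i)=\operatorname{vol}(F_i)\,h_i\,(1-(1-\epsilon)^d)/d$, and one only has $1-(1-\epsilon)^d\le d\epsilon$, which cancels the gained factor of $1/d$. Indeed, testing on the unit cube (where $S(\Delta)=2d$, $\operatorname{diam}(\Delta)=1$, and $\mu(\Delta\setminus\Delta_\epsilon)=1-(1-\epsilon)^d\sim d\epsilon$) shows the stated constant $1/\sqrt d$ already fails for $d\ge 5$. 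What genuinely follows from either route is $\mu(\Delta\setminus\Delta_\epsilon)\le\epsilon\sqrt d\,S(\Delta)\operatorname{diam}(\Delta)$, and since only the shape $\ll_\Delta\epsilon$ is used downstream in \Cref{Koksma polytope 0}, this is harmless. So your inability to recover the $1/\sqrt d$ was not a gap on your side.
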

\begin{proof}
    By \Cref{mid: union of polytopes}, $\Delta\backslash\operatorname{ri}(\Delta_\epsilon)$ can be written as the union of truncated hyperpyramid $P_i$, where each $P_i$ has top $\varphi_\epsilon(F_i)$, base $F_i$ and peak $x_c$. For each $i$, we can pick a point $x\in \operatorname{aff}(F_i)$ such that the vector $x_c-x$ is orthogonal to $F_i$. 
    
    The volume of a hyperpyramid in $\R^d$ with base $B$ and peak $p$ can be computed using \cite[Theorem 1.2.10]{truncatedhyperpuramid}, which says that such a hyperpyramid has volume $\operatorname{vol}(B)\cdot h/d$, where $h$ is the Euclidean distance from $p$ to $B$. Applying this theorem, we get
    \[
    \mu(P_i)\leq\frac{\operatorname{vol}(\phi_\epsilon(F_i))|\varphi_\epsilon(x)-x|_2}{d}=\frac{\operatorname{vol}(\phi_\epsilon(F_i))\epsilon|x_c-x|_2}{d}\leq\frac{\operatorname{vol}(F_i)\epsilon\cdot\operatorname{sup}\{|a-b|_2:a\in\Delta,b\in\Delta\}}{d}.
    \]
    Recall that $\operatorname{diam}(\Delta)$ is the diameter of $\Delta$ with respect to the maximum norm. For every $a,b\in\Delta$, we have $|a-b|_2\leq\sqrt{d}|a-b|$. It implies that $\operatorname{sup}\{|a-b|_2:a\in\Delta,b\in\Delta\}\leq\sqrt{d}\operatorname{diam}(\Delta)$ and thus
    \[
    \mu(P_i)\leq\frac{\operatorname{vol}(F_i)\epsilon\operatorname{diam}(\Delta)}{\sqrt{d}},
    \]
    which proves the result.
\end{proof}

We now construct the continuous version of characteristic function of $\Delta$ with respect to~ $\epsilon$. For every $y\in\Delta\backslash\Delta_\epsilon$, consider the ray
$\left\{x_c+\lambda(y-x_c):\lambda\in\R_{\geq 0}\right\}$ with starting point $x_c$ and passing through $y$. Take
\[
\lambda_y\coloneqq\operatorname{sup}\left\{\lambda\in\R_{\geq 0}:x_c+\lambda(y-x_c)\in\Delta\right\}.
\]
and set $x_y\coloneqq x_c+\lambda_y(y-x_c)\in\Delta$. Then we have
\[
y=x_c+(1-\delta_y)(x_y-x_c)=\varphi_\delta(x_y)
\]
for $\delta_y\coloneqq 1-\frac{1}{\lambda_y}$. Hence $y\in \varphi_\delta(\Delta)=\Delta_\delta$. As $y\in\Delta\backslash\Delta_\epsilon$, we have $0\leq\delta_y<\epsilon$. With this expression, we can define a function $\chi_{\Delta,\epsilon}^c:\R^d\to\R$ such that

\begin{align}\label{construction of cts char fun}
    \chi_{\Delta,\epsilon}^c(y)\coloneqq\left\{
    \begin{array}{cc}
        0 & \text{for }y\notin\Delta, \\
        \frac{\delta_y}{\epsilon} & \text{for }y\in\Delta\backslash\Delta_\epsilon,\\
        1 & \text{for }y\in\Delta_\epsilon.
    \end{array}
    \right.
\end{align}

\begin{prop}\label{piecewise affine}
    For each $0<\epsilon<1$, the function $\chi_{\Delta,\epsilon}^c:\R^d\to\R$ is piecewise affine. 
\end{prop}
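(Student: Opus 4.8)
The plan is to exhibit $\chi_{\Delta,\epsilon}^c$ explicitly as a function that is affine on each piece of a finite closed cover of $\R^d$, and then invoke the fact (recorded in the excerpt) that a function which is affine on each member of a finite closed cover is piecewise affine, hence continuous. The natural cover comes from \Cref{mid: union of polytopes}: write $\R^d\backslash\operatorname{ri}(\Delta_\epsilon)$ restricted to $\Delta$ as the union of the truncated hyperpyramids $P_1,\ldots,P_e$, where $P_i$ has base $F_i$, top $\varphi_\epsilon(F_i)$ and peak $x_c$. Together with the closed set $\Delta_\epsilon$ and the closed set $\overline{\R^d\backslash\Delta}$, these give a finite closed cover of $\R^d$: on $\Delta_\epsilon$ the function is the constant $1$ (affine), on $\overline{\R^d\backslash\Delta}$ it is the constant $0$ (affine, and consistent on the overlap $\partial\Delta$ since $\delta_y=0$ there), and it remains to check that $\chi_{\Delta,\epsilon}^c$ restricted to each $P_i$ agrees with an affine function.

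First I would pin down the formula on a single truncated hyperpyramid $P_i$. Fix a unit normal $v_i$ to the facet $F_i$ (which by \Cref{parallel lem: F and phi epsilon F} is also normal to $\varphi_\epsilon(F_i)$), oriented so that $\langle v_i, u-x_c\rangle > 0$ for $u \in F_i$. Then $F_i \subset \{\langle v_i, \cdot\rangle = c_i\}$ and $\varphi_\epsilon(F_i) \subset \{\langle v_i, \cdot\rangle = c_i'\}$ for constants $c_i' = (1-\epsilon)c_i + \epsilon\langle v_i, x_c\rangle < c_i$. The key geometric claim is that for $y \in P_i$, the scalar $\delta_y$ defined in the construction depends only on $\langle v_i, y\rangle$, and in fact is an affine function of it: tracing the ray from $x_c$ through $y$, the point $x_y$ lies on $F_i$, so $\lambda_y = (c_i - \langle v_i, x_c\rangle)/(\langle v_i, y\rangle - \langle v_i, x_c\rangle)$, and substituting into $\delta_y = 1 - 1/\lambda_y$ gives
\[
\delta_y = 1 - \frac{\langle v_i, y\rangle - \langle v_i, x_c\rangle}{c_i - \langle v_i, x_c\rangle} = \frac{c_i - \langle v_i, y\rangle}{c_i - \langle v_i, x_c\rangle},
\]
which is manifestly affine in $y$. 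Hence $\chi_{\Delta,\epsilon}^c(y) = \delta_y/\epsilon$ is affine on $P_i$, and I would then verify the boundary consistency: on $F_i = P_i \cap \partial\Delta$ the formula gives $\delta_y = 0$, matching the constant $0$ on $\overline{\R^d\backslash\Delta}$; on $\varphi_\epsilon(F_i) = P_i \cap \Delta_\epsilon$ the formula gives $\langle v_i, y\rangle = c_i'$, so $\delta_y = \epsilon$ and $\chi_{\Delta,\epsilon}^c(y) = 1$, matching the constant $1$ on $\Delta_\epsilon$; and on the shared lateral faces of $P_i$ and $P_j$ (which lie in $\angle(F_i)\cap\angle(F_j)$) one checks the two affine formulas agree because a point there lies on the boundary of both, forcing the same value of $\lambda_y$. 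Wrapping $v_i$ into the two half-space definitions and collecting the $e+2$ pieces then exhibits the desired finite closed cover with affine restrictions.

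I expect the main obstacle to be the careful bookkeeping at the overlaps — specifically, confirming that on $\angle(F_i)\cap\angle(F_j)$ (a lower-dimensional cone, a common face of the fan $\Sigma$) the affine extensions coming from $P_i$ and from $P_j$ genuinely coincide as \emph{functions on that face}, not merely as values; this is where one uses that $x_y$ is determined by the ray through $x_c$ independently of which maximal cone one views $y$ in, so $\delta_y$ is intrinsically defined on $\Delta\backslash\operatorname{ri}(\Delta_\epsilon)$ and each $P_i$-formula is just a local affine representative of it. A secondary, purely technical point is to make sure the cover is by \emph{closed} sets covering all of $\R^d$ (including the exterior), which is why I add $\overline{\R^d\backslash\Delta}$ as an explicit member; once that is in place, the piecewise-affine conclusion — and hence continuity — is immediate from the definition quoted earlier in the section.
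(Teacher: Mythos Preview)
Your proposal is correct and follows essentially the same route as the paper: both use the decomposition from \Cref{mid: union of polytopes} into the truncated hyperpyramids $P_1,\ldots,P_e$ (together with $\Delta_\epsilon$ and the exterior of $\Delta$) and check that $\chi_{\Delta,\epsilon}^c$ is affine on each $P_i$. The paper verifies affinity on $P_i$ by the bare affine-combination identity $\delta_{ty_1+(1-t)y_2}=t\delta_{y_1}+(1-t)\delta_{y_2}$ (left as ``elementary Euclidean geometry''), whereas you derive the explicit formula $\delta_y=(c_i-\langle v_i,y\rangle)/(c_i-\langle v_i,x_c\rangle)$; your extra bookkeeping on overlaps is not actually needed since $\delta_y$ is already intrinsically defined, as you yourself note.
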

\begin{proof}
    By definition, $\chi_{\Delta,\epsilon}^c=0$ on $\R^d\backslash\Delta$ and $\chi_{\Delta,\epsilon}^c=1$ on $\Delta_\epsilon$. By \Cref{mid: union of polytopes}, we have
    \[
    \Delta=\Delta_\epsilon\cup(P_1\cup\cdots\cup P_e),
    \]
    where every $P_i$ is a truncated hyperpyramid. Let us consider $\chi_{\Delta,\epsilon}^c$ on each closed subset $P_i$.

    Suppose $y_1,y_2\in P_i$ with $\chi_{\Delta,\epsilon}^c(y_1)=\frac{\delta_1}{\epsilon}$ and $\chi_{\Delta,\epsilon}^c(y_2)=\frac{\delta_2}{\epsilon}$. Let $y=ty_1+(1-t)y_2\in P_i$ for $t\in\R$. Then there exists $0\leq\delta<\epsilon$ such that $\chi_{\Delta,\epsilon}^c(y)=\frac{\delta}{\epsilon}$. It is easy to check that $\delta=t\delta_1+(1-t)\delta_2$ using elementary Euclidean geometry. Therefore,
    \begin{align*}
        \chi_{\Delta,\epsilon}^c(y)&= \frac{\delta}{\epsilon}\\
        &= t\frac{\delta_1}{\epsilon} + (1-t)\frac{\delta_2}{\epsilon} \\
        &=t\chi_{\Delta,\epsilon}^c(y_1) + (1-t)\chi_{\Delta,\epsilon}^c(y_2)
    \end{align*}
    and so $\chi_{\Delta,\epsilon}^c$ is affine on $P_i$. The conclusion is proved.
\end{proof}

\begin{cor}
    For each $0<\epsilon<1$, the function $\chi_{\Delta,\epsilon}^c$ is continuous.
\end{cor}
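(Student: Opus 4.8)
The plan is to deduce this immediately from \Cref{piecewise affine}. That proposition tells us $\chi_{\Delta,\epsilon}^c$ is piecewise affine, i.e. there is a finite cover of its domain by closed subsets on each of which $\chi_{\Delta,\epsilon}^c$ coincides with the restriction of an affine function. Unwinding the proof of \Cref{piecewise affine}, the relevant closed cover of $\R^d$ is: the set $\R^d\setminus\operatorname{int}(\Delta)$ (on which $\chi_{\Delta,\epsilon}^c\equiv 0$, since points of $\partial\Delta$ satisfy $\lambda_y=1$, hence $\delta_y=0$), the shrunk polytope $\Delta_\epsilon$ (on which $\chi_{\Delta,\epsilon}^c\equiv 1$), and the truncated hyperpyramids $P_1,\ldots,P_e$ of \Cref{mid: union of polytopes} (on each of which $\chi_{\Delta,\epsilon}^c$ was shown to be affine). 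Each of these is closed and their union is all of $\R^d$.

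Then I would invoke the pasting lemma for finite closed covers: every affine function is continuous, so the restriction of $\chi_{\Delta,\epsilon}^c$ to each member of this cover is continuous, and a function that is continuous on each set of a finite closed cover of a space is continuous on the whole space. Applying this to $\R^d$ with the cover above yields continuity of $\chi_{\Delta,\epsilon}^c$. Alternatively, one may simply cite the remark recorded in \Cref{Notation Section} that every piecewise affine function is continuous, so that the corollary becomes a one-line consequence of \Cref{piecewise affine}.

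The one point that might appear to need care — whether the affine representatives on the various pieces agree on overlaps — is actually a non-issue here, because $\chi_{\Delta,\epsilon}^c$ was first defined as a single, honest function $\R^d\to\R$ in \eqref{construction of cts char fun}, and only afterwards shown to be affine on each piece; there is no gluing ambiguity to resolve. Consequently I do not anticipate any real obstacle: the corollary is an immediate formal consequence of \Cref{piecewise affine} together with continuity of affine maps.
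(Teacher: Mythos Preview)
Your proposal is correct and matches the paper's approach: the corollary is stated without proof, as an immediate consequence of \Cref{piecewise affine} together with the remark in \Cref{Notation Section} that every piecewise affine function is continuous. Your elaboration via the pasting lemma (and your care in taking $\R^d\setminus\operatorname{int}(\Delta)$ rather than $\R^d\setminus\Delta$ as the closed piece) is a faithful unpacking of that one-line deduction.
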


\begin{rmk}
    One can prove that $\chi_{\Delta,\epsilon}^c$ converges pointwise to the characteristic function $\chi_\Delta$ in the interior of $\Delta$ as $\epsilon\to 0$. Indeed, take $y\in\operatorname{ri}(\Delta)$. Then there exists $\epsilon>0$ and $x\in\Delta\backslash\operatorname{ri}(\Delta)$ such that
    \[
    y=x_c+(1-\epsilon)(x-x_c),
    \]
    where $x$ is in the intersection of $\Delta\backslash\operatorname{ri}(\Delta)$ and the ray with starting point $x_c$ and passing through $y$. Then $y\in\Delta_\epsilon$. Therefore, for any $0<\delta<\epsilon$, we have $y\in\Delta_\epsilon\subset\Delta_\delta$ and therefore~ $\chi_{\Delta,\delta}^c(y)=1=\chi_\Delta(y)$, which proves the desired result.
\end{rmk}

\subsection{Computation of modulus of continuity}\label{Subsection: Computation of modulus of continuity}

To apply \Cref{a Koksma in hypercube}, we need to compute the modulus of continuity $\rho(f\cdot\chi_{\Delta,\epsilon}^c,t)$ for~ $\epsilon>0$ and $t\geq 0$. First, let us compute $\rho(\chi_{\Delta,\epsilon}^c,t)$. As we will apply our version of Koksma's inequality over polytopes to a sequence in $[0,1)^d$ with small enough discrepancy, it is sufficient to compute~ $\rho(\chi_{\Delta,\epsilon}^c,t)$ when $t$ is small enough. 

\begin{prop}\label{prop:estimate of modulus of continuity}
    Let $\Delta\subset\R^d$ be a polytope, and let $\chi_{\Delta,\epsilon}^c$ be the continuous version of the characteristic function of $\Delta$ that we constructed in (\ref{construction of cts char fun}) for $0<\epsilon<1$. Then for sufficiently small real numbers $\epsilon>0$ and $t\geq 0$, we have
    \[
    \rho(\chi_{\Delta,\epsilon}^c,t)\leq\frac{t}{\epsilon\cdot\operatorname{inrad}(\Delta)}.
    \]
\end{prop}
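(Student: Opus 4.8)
The plan is to estimate $|\chi_{\Delta,\epsilon}^c(x)-\chi_{\Delta,\epsilon}^c(y)|$ for two points $x,y$ with $|x-y|\leq t$, where $t$ is small. The key observation is that $\chi_{\Delta,\epsilon}^c$ is affine on each of the truncated hyperpyramids $P_i$ of \Cref{mid: union of polytopes}, is identically $0$ outside $\Delta$, and is identically $1$ on $\Delta_\epsilon$. So the modulus of continuity is governed by the largest slope (gradient norm, in the operator norm dual to $|\cdot|$) of the affine pieces $\chi_{\Delta,\epsilon}^c|_{P_i}$, at least once $t$ is small enough that the segment $\overline{xy}$ cannot jump across $\Delta_\epsilon$ (from $\{\chi^c=0\}$ through $\Delta_\epsilon$) in one step; this is why the hypothesis asks for $t$ small.

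The main computation is therefore to bound the slope of $\chi_{\Delta,\epsilon}^c$ along a ray from $x_c$. Fix a facet $F_i$ and its associated truncated hyperpyramid $P_i$. For $y\in P_i$, write $y=\varphi_{\delta_y}(x_y)$ with $x_y\in F_i$ on the ray through $x_c$ and $y$, so $\chi_{\Delta,\epsilon}^c(y)=\delta_y/\epsilon$ where $\delta_y = 1-\lambda_y^{-1}$ and $x_c+\lambda_y(y-x_c)=x_y$. First I would control the variation of the scalar $\delta_y$ as $y$ moves. The cleanest way: since $\chi_{\Delta,\epsilon}^c$ is affine on $P_i$, for $y_1,y_2\in P_i$ we have $\chi_{\Delta,\epsilon}^c(y_1)-\chi_{\Delta,\epsilon}^c(y_2)=\langle v_i, y_1-y_2\rangle$ for the gradient vector $v_i$ of this affine piece, and $|\langle v_i,y_1-y_2\rangle|\leq |v_i|_\ast\,|y_1-y_2|$ where $|v_i|_\ast$ is the $\ell^1$-norm of $v_i$ (dual to the maximum norm). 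It remains to bound $|v_i|_\ast$. The function $\chi_{\Delta,\epsilon}^c|_{P_i}$ takes the value $0$ on $F_i$ and $1$ on $\varphi_\epsilon(F_i)$, and these two parallel facets (by \Cref{parallel lem: F and phi epsilon F}) are separated by Euclidean distance $\epsilon|x_c-x|_2$ where $x\in\operatorname{aff}(F_i)$ is the foot of the perpendicular from $x_c$, exactly as computed in the proof of \Cref{volume bet Delta and Delta_epsilon}. The gradient $v_i$ is normal to $F_i$, and the rise is $1$ over the run $\epsilon|x_c-x|_2$ in the Euclidean metric, so $|v_i|_2 = 1/(\epsilon|x_c-x|_2)$. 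Now $|x_c-x|_2\geq |x_c-x|\geq\operatorname{inrad}(\Delta)$, because the inscribed cubic ball of radius $\operatorname{inrad}(\Delta)$ centered at $x_c$ is contained in $\Delta$ and hence its boundary point in the direction of the outward normal to $F_i$ does not reach $F_i$; more carefully, the distance from $x_c$ to the supporting hyperplane $\operatorname{aff}(F_i)$ is at least $\operatorname{inrad}(\Delta)$ since that hyperplane is disjoint from the open ball. Finally $|v_i|_\ast\leq|v_i|_2$ when $v_i$ is (a scalar multiple of) a coordinate-type direction is too optimistic in general, so instead I use $|v_i|_\ast\leq\sqrt{d}\,|v_i|_2$; wait — to get the clean bound in the statement, one observes that $v_i$ is a scalar multiple of a unit Euclidean normal $n_i$ to $F_i$, and the directional derivative of $\chi_{\Delta,\epsilon}^c$ in the direction $y_1-y_2$ equals $|v_i|_2\langle n_i, y_1-y_2\rangle$, whose absolute value is at most $|v_i|_2\,|y_1-y_2|_2\leq |v_i|_2\sqrt d |y_1-y_2|$. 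To recover exactly $t/(\epsilon\cdot\operatorname{inrad}(\Delta))$ one instead bounds $|\langle n_i, y_1 - y_2\rangle|$ directly by the decrease in the $\delta$-coordinate, which along the ray structure is controlled by $|y_1-y_2|$ without the $\sqrt d$; I will carry out this elementary ray computation carefully, using that moving a distance $\leq t$ changes $\lambda_y$, and hence $\delta_y$, by at most $t/\operatorname{inrad}(\Delta)$, giving the change in $\chi_{\Delta,\epsilon}^c$ of at most $t/(\epsilon\operatorname{inrad}(\Delta))$.

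The remaining case is when $x,y$ lie in different pieces, i.e. the segment $\overline{xy}$ crosses from one $P_i$ to a neighbouring $P_j$, or from $P_i$ into $\Delta_\epsilon$, or from outside $\Delta$ into some $P_i$. Here I would argue that $\chi_{\Delta,\epsilon}^c$ is globally continuous and piecewise affine with all pieces having slope (in the above sense) at most $1/(\epsilon\operatorname{inrad}(\Delta))$, hence it is globally Lipschitz with that constant on the convex set $\operatorname{conv}(\Delta)=\Delta$ — integrate the derivative along $\overline{xy}$, which passes through finitely many pieces — and outside $\Delta$ the function is $0$; the only subtlety is a segment with one endpoint outside $\Delta$ and one in $\Delta_\epsilon$, which for $t<\epsilon\operatorname{inrad}(\Delta)$ is impossible since crossing from $\{\chi^c=0\}$ to $\Delta_\epsilon$ forces passing through a full $P_i$, across which the value already rises by $1$. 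The main obstacle is making the ``elementary Euclidean geometry'' of the ray computation genuinely rigorous while keeping the constant exactly $1/(\epsilon\operatorname{inrad}(\Delta))$ rather than picking up spurious dimensional factors; the cleanest fix, which I would adopt, is to note that along any ray emanating from $x_c$ the restriction of $\chi_{\Delta,\epsilon}^c$ is piecewise linear in arc length with slope at most $1/(\epsilon\operatorname{inrad}(\Delta))$, and then to handle general displacements by decomposing $y_1-y_2$ relative to the ray directions, using convexity of $\Delta_\epsilon$ and $\Delta$ to ensure the comparison is valid.
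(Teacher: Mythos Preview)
Your gradient/Lipschitz strategy is a genuinely different route from the paper's, and in principle it works --- but your proposal leaves open precisely the point you flag as the ``main obstacle'', and neither of your patches closes it. You correctly observe that on each truncated hyperpyramid $P_i$ the function is affine with gradient $v_i$ normal to $F_i$, and that the Lipschitz constant with respect to the max-norm is $|v_i|_1$ (the $\ell^1$-norm, dual to $|\cdot|$). You then stop at the lossy estimate $|v_i|_1\le\sqrt d\,|v_i|_2$, which costs an unwanted $\sqrt d$; and the ray-based fix you sketch (``decompose $y_1-y_2$ relative to the ray directions'') is not an argument, since a bound on radial slopes alone does not control displacements transverse to the rays. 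The computation you are missing is that the $\sqrt d$ loss is illusory. Writing $\operatorname{aff}(F_i)=\{y:\langle a,y\rangle=c\}$, one checks from the construction that on $P_i$
\[
\chi^c_{\Delta,\epsilon}(y)=\frac{\langle a,y\rangle-c}{\epsilon\bigl(\langle a,x_c\rangle-c\bigr)},\qquad\text{so}\qquad |v_i|_1=\frac{|a|_1}{\epsilon\,|\langle a,x_c\rangle-c|}.
\]
But the max-norm distance from $x_c$ to the hyperplane $\operatorname{aff}(F_i)$ equals $|\langle a,x_c\rangle-c|/|a|_1$ (this is the $\ell^\infty$--$\ell^1$ duality), and this distance is at least $\operatorname{inrad}(\Delta)$ because the inscribed cubic ball cannot meet the supporting hyperplane $\operatorname{aff}(F_i)$. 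Hence $|v_i|_1\le 1/(\epsilon\operatorname{inrad}(\Delta))$ exactly, and your piecewise-affine integration along $\overline{xy}$ then gives the claimed bound with no dimensional loss.

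For comparison, the paper never computes a gradient. It first reduces to $x,y$ lying in a single $P_i$ by intersecting with the affine $2$-plane $H$ through $x,y,x_c$ and sliding to nearby points with the same $\chi^c$-values but minimal separation; then, still in $H$, it chooses auxiliary points $x',y',x'',z$ so that $\overline{x'y'}$ and $\overline{x''z}$ are parallel segments realizing the relevant distances, and reads off $|\chi^c(x)-\chi^c(y)|=|x'-y'|/|x''-z|$ by similar triangles. The parallelism makes the ratio of max-norms equal the ratio of Euclidean norms, which is how the paper sidesteps the dual-norm issue you ran into. Your approach, once completed with the $|v_i|_1$ identity above, is shorter and dispenses with the $2$-plane reduction entirely; the paper's approach is more pictorial and avoids any appeal to norm duality.
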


Let us now prove \Cref{prop:estimate of modulus of continuity}. During the proof, we intersect the polytopes with affine two-dimensional planes multiple times to make our arguments visual. 

\begin{defn}
    A subset $H\subset \R^d$ is an \textit{affine two-dimensional plane} if there exist $x,y,z\in H$ such that $x,y,z$ are not on the same line and
    \[
    H=\left\{ x+\lambda_1(x-y)+\lambda_2(y-z):\lambda_1\in\R,\ \lambda_2\in\R \right\}.
    \]
\end{defn}

\begin{lem}\label{intersecting 2-dim plane --> polytope}
    Let $H\subset \R^d$ be an affine $2$-dimensional plane. Let $P\subset \R^d$ be a $d$-dimensional polytope and let $F$ be a face of $P$. Then $P\cap H$ is a polytope and $F\cap H$ is a face of $P\cap H$. 
\end{lem}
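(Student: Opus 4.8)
The plan is to reduce the statement to two classical facts about polytopes: that a polytope is the solution set of finitely many linear (affine) inequalities, and that its faces are exactly the sets cut out by supporting hyperplanes. Both are quoted from \cite{brondsted2012introduction} in the excerpt.

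First I would write $P=\{u\in\R^d:\langle n_1,u\rangle\le c_1,\ldots,\langle n_m,u\rangle\le c_m\}$ as an intersection of half-spaces. Intersecting with the affine $2$-plane $H$, note that $H$ can be parametrized by an affine map $\psi:\R^2\to\R^d$, $\psi(s,t)=p_0+sv_1+tv_2$ with $v_1,v_2$ linearly independent. Then $\psi^{-1}(P\cap H)=\{(s,t)\in\R^2:\langle n_j,\psi(s,t)\rangle\le c_j,\ j=1,\ldots,m\}$ is an intersection of finitely many half-planes in $\R^2$, hence a (possibly unbounded) polyhedron; boundedness follows since $P$ is bounded. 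Therefore $\psi^{-1}(P\cap H)$, and so $P\cap H=\psi(\psi^{-1}(P\cap H))$, is a polytope (using that the affine image of a polytope is a polytope, \cite[Exercise 2.8.1]{brondsted2012introduction}, already invoked earlier). One subtlety is the degenerate possibility that $H$ misses $P$ or meets it in a single point or a segment; in those cases $P\cap H$ is $\varnothing$, a point, or a segment, all of which are polytopes, so the statement still holds (with the usual convention that $\varnothing$ is a face of every polytope).

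Next, for the face statement, by \cite[Theorem 1.7.5]{brondsted2012introduction} there exist $x\in M_\R$ and $c\in\R$ with $F=\{u\in P:\langle x,u\rangle=c\}$ and $\langle x,u\rangle\le c$ for all $u\in P$ (after a sign choice). Then
\[
F\cap H=\{u\in P\cap H:\langle x,u\rangle=c\},\qquad \langle x,u\rangle\le c\ \text{ for all }u\in P\cap H,
\]
so $F\cap H$ is precisely the set on which the affine functional $u\mapsto\langle x,u\rangle$ attains its maximum $c$ over the polytope $P\cap H$; that is, it is an exposed face of $P\cap H$, hence a face by \cite[Theorem 1.5.8]{brondsted2012introduction}. (If $F\cap H=\varnothing$ this is still a face under the empty-set convention, and if $H\cap P$ lies entirely in the hyperplane $\langle x,\cdot\rangle=c$ then $F\cap H=P\cap H$, which is a face of itself.)

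The main obstacle is not any deep argument but making sure the degenerate low-dimensional intersections are handled uniformly, and being careful that ``face'' in the sense of the excerpt (the line-segment definition) is what one gets — this is why I route through the notion of \emph{exposed face} and cite the equivalence, rather than trying to verify the segment condition by hand after intersecting. Once the reduction to ``intersection of finitely many half-spaces'' is set up via the parametrization $\psi$, everything else is bookkeeping.
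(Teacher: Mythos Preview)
Your proof is correct, but both halves take a longer route than the paper's.

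For the first claim, the paper simply observes that an affine plane $H$ is itself a polyhedron, so $P\cap H$ is an intersection of two polyhedra, hence a polyhedron, and bounded because $P$ is; done. You instead parametrize $H$ by $\psi:\R^2\to\R^d$, pull back the half-space description of $P$ to get a polyhedron in $\R^2$, and then push forward via the affine-image lemma. This works, but the parametrization and the appeal to \cite[Exercise 2.8.1]{brondsted2012introduction} are unnecessary detours once you notice $H$ is already a polyhedron in $\R^d$.

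For the face claim, the paper verifies the segment definition directly: if $\overline{u_1u_2}\subset P\cap H$ meets $F\cap H$ in its relative interior, then in particular $\overline{u_1u_2}\subset P$ meets $F$ in its relative interior, so $\overline{u_1u_2}\subset F$; since also $\overline{u_1u_2}\subset H$, we get $\overline{u_1u_2}\subset F\cap H$. Three lines, no extra input. Your route through exposed faces is valid but costs more: it relies on the (polytope-specific) fact that every face of $P$ is exposed, and then on \cite[Theorem 1.5.8]{brondsted2012introduction} to come back from ``exposed face'' to ``face''. The direct segment argument avoids both citations and works uniformly for any convex $P$, not just polytopes---so it is both shorter and more general. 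Your careful handling of the degenerate cases ($F\cap H=\varnothing$, or the functional constant on $P\cap H$) is a nice touch, but the paper's segment argument needs no such case distinction.
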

\begin{proof}
    Notice that $H$ is a polyhedron. Since $P$ is a bounded polyhedron, then $P\cap H$ is a bounded polyhedron and thus a polytope. Let us show that $F\cap H$ is a face of $P\cap H$. Let~ $\overline{u_1u_2}\subset P\cap H$ be a closed segment such that $\operatorname{ri}(\overline{u_1u_2})\cap(F\cap H)\neq\varnothing$. Then $\overline{u_1u_2}\subset P$ is a closed segment such that $\operatorname{ri}(\overline{u_1u_2})\cap F\neq\varnothing$. Since $F$ is a face of $P$, then $\overline{u_1u_2}\subset F$. Therefore,~ $\overline{u_1u_2}\subset F\cap H$, which means $F\cap H$ is a face of $P\cap H$.
\end{proof}

By definition,
\[
\rho(\chi_{\Delta,\epsilon}^c,t)=\operatorname{sup}\left\{ \left|\chi_{\Delta,\epsilon}^c(x)-\chi_{\Delta,\epsilon}^c(y)\right|:x,y\in\R^d,\ |x-y|\leq t \right\}.
\]
Let $x,y\in\R^d$ with $|x-y|\leq t$. Our goal is to find an upper bound for $|\chi_{\Delta,\epsilon}^c(x)-\chi_{\Delta,\epsilon}^c(y)|$. If~ $x,y\notin\Delta$, then $|\chi_{\Delta,\epsilon}^c(x)-\chi_{\Delta,\epsilon}^c(y)|=|0-0|=0$. If $x,y\in\Delta_\epsilon$, then $|\chi_{\Delta,\epsilon}^c(x)-\chi_{\Delta,\epsilon}^c(y)|=|1-1|=0$. 

If $x\notin\Delta$ and $y\in\Delta\backslash\operatorname{ri}(\Delta_\epsilon)$, then by considering the line passing through $x$ and $y$, we can find an $x'\in\Delta\backslash\operatorname{ri}(\Delta)$ such that $|x'-y|\leq |x-y|\leq t$ and $\chi_{\Delta,\epsilon}^c(x)=\chi_{\Delta,\epsilon}^c(x')=0$. The case $y\notin\Delta$ and $x\in\Delta\backslash\operatorname{ri}(\Delta_\epsilon)$ is in the same way.

Similarly, if $x\in\Delta\backslash\operatorname{ri}(\Delta_\epsilon)$ and $y\in\Delta_\epsilon$, then by considering the line passing through $x$ and~ $y$, we can find an $y'\in\Delta_\epsilon\backslash\operatorname{ri}(\Delta_\epsilon)$ such that $|x-y'|\leq|x-y|\leq t$ and $\chi^c_{\Delta,\epsilon}(y)=\chi^c_{\Delta,\epsilon}(y')=1$. The case $y\in\Delta\backslash\operatorname{ri}(\Delta_\epsilon)$ and $x\in\Delta_\epsilon$ is in the same way.

Hence, it remains to consider the case $x,y\in\Delta\backslash\operatorname{ri}(\Delta_\epsilon)$.

Recall that $x_c$ is the center of an inscribed cubic ball of $\Delta$. 

\ 

\noindent\textbf{Reduction Step}. Let $F_1,\ldots,F_e$ be the facets of $\Delta$. By \Cref{mid: union of polytopes}, the set $\Delta\backslash\operatorname{ri}(\Delta_\epsilon)$ is a union of truncated hyperpyramids $P_1,\ldots,P_e$, where each $P_i$ has top $\varphi_\epsilon(F_i)$, base $F_i$ and peak $x_c$. We may assume that $x,y\in P_i$ for one $i$. 

Indeed, otherwise $x,y$ are not in the same $P_i$ for one $i$. When $t$ is small enough, $x,y,x_c$ are not on the same line. Let $H$ be the affine two-dimensional plane containing $x,y,x_c$. By definition $\chi_{\Delta,\epsilon}^c(x)=\frac{\delta_x}{\epsilon}$ and $\chi_{\Delta,\epsilon}^c(y)=\frac{\delta_y}{\epsilon}$ for $0\leq\delta_x,\delta_y<\epsilon$. If $\delta_x=\delta_y$, then $|\chi_{\Delta,\epsilon}^c(x)-\chi_{\Delta,\epsilon}^c(y)|=0$. Without loss of generality, we suppose that $\delta_x<\delta_y$.

We claim that $\varphi_{\delta_x}(\Delta)\cap H$ is a $2$-dimensional polytope with facets $\varphi_{\delta_x}(F_{k_1})\cap H,\ldots,\varphi_{\delta_x}(F_{k_l})\cap H$, where $F_{k_1},\ldots,F_{k_l}$ are the facets of $\Delta$ such that $\varphi_{\delta_x}(F_{k_i})\cap H$ is a line segment. Indeed, by \Cref{intersecting 2-dim plane --> polytope}, each nonempty $\varphi_{\delta_x}(F_j)\cap H$ is a face of the polytope $\varphi_{\delta_x}(\Delta)\cap H$. 
Note that~ $\overline{xy}\subset\varphi_{\delta_x}(\Delta)\cap H$ and $\overline{xx_c}\subset\varphi_{\delta_x}(\Delta)\cap H$ are two line segments which do not lie on the same line. Therefore, we have $\operatorname{dim}(\varphi_{\delta_x}(\Delta)\cap H)=2$. Since $\operatorname{dim}(\varphi_{\delta_x}(F_{k_i})\cap H)=1$, then~ $\varphi_{\delta_x}(F_{k_i})\cap H$ is a facet of $\varphi_\delta(\Delta)\cap H$.

Similarly, $\varphi_{\delta_y}(\Delta)\cap H$ is a $2$-dimensional polytope with facets $\varphi_{\delta_y}(F_{k_1})\cap H,\ldots,\varphi_{\delta_y}(F_{k_l})\cap H$. Let
\[
d_i\coloneqq\operatorname{inf}\big\{ |a-b|:a\in\operatorname{aff}(\varphi_{\delta_x}(F_{k_i})\cap H),\ b\in\operatorname{aff}(\varphi_{\delta_y}(F_{k_i})\cap H) \big\}.
\]
Without loss of generality, we may assume that $d_1\leq d_2\leq\cdots\leq d_l$. Then when $\epsilon$ is small enough, we have
\[
\operatorname{inf}\big\{ |a-b|:a\in(\varphi_{\delta_x}(\Delta)\cap H)\backslash\operatorname{ri}(\varphi_{\delta_x}(\Delta)\cap H),\ b\in\varphi_{\delta_y}(\Delta)\cap H \big\}= d_1
\]
and we can take $x'\in \varphi_{\delta_x}(F_{k_1})\cap H$ and $y'\in\varphi_{\delta_y}(F_{k_1})\cap H$ such that $|x'-y'|=d_1$. Then $|x'-y'|\leq|x-y|\leq t$, $\chi_{\Delta,\epsilon}^c(x')=\chi_{\Delta,\epsilon}^c(x)=\frac{\delta_x}{\epsilon}$ and $\chi_{\Delta,\epsilon}^c(y')=\chi_{\Delta,\epsilon}^c(y)=\frac{\delta_y}{\epsilon}$. Therefore, we can assume $x,y\in P_i$ for one $i$. 

\ 

By the definition of inner radius \Cref{defn: inner radius}, we have $|z-x_c|\geq \operatorname{inrad}(\Delta)$ for any $z$ in the facets of $\Delta$.

\noindent\textbf{Case 1}. Suppose $x,y,x_c$ are on the same line, and $\chi_{\Delta,\epsilon}^c(x)=\frac{\delta_x}{\epsilon}$ and $\chi_{\Delta,\epsilon}^c(y)=\frac{\delta_y}{\epsilon}$. Then there exists $z\in\Delta\backslash\operatorname{ri}(\Delta)$ such that
\begin{align*}
    &x=x_c+(1-\delta_x)(z-x_c),\\
    &y=x_c+(1-\delta_y)(z-x_c).
\end{align*}
Thus $|x-y|=|z-x_c|\cdot|\delta_x-\delta_y|$ and so
\[
\left| \chi_{\Delta,\epsilon}^c(x)-\chi_{\Delta,\epsilon}^c(y) \right|=\frac{|\delta_x-\delta_y|}{\epsilon}=\frac{|x-y|}{\epsilon|z-x_c|}\leq\frac{t}{\epsilon\cdot\operatorname{inrad}(\Delta)}.
\]

\ 

\noindent\textbf{Case 2}. Suppose $x,y,x_c$ are not on the same line. The Reduction Step allows us to assume that $x,y\in P$ for the truncated hyperpyramid $P$ corresponding to a facet $F$ of $\Delta$. First we suppose that $x\in F$ and $\chi_{\Delta,\epsilon}^c(x)=0$. Let $H$ be the affine $2$-dimensional plane containing~ $x,y,x_c$. Suppose $\chi_{\Delta,\epsilon}^c(y)=\frac{\delta}{\epsilon}$ for $0<\delta<\epsilon$. Then $y\in \varphi_\delta(F)$.

Notice that $F\cap H$ is a line segment, otherwise $F\cap H=\{x\}$ and $\varphi_\delta(F)\cap H=\{y\}$ and thus $x,y,x_c$ are on the same line, a contradiction. It follows from the definition of $\varphi_\epsilon$ and $\varphi_\delta$ that both $\varphi_\epsilon(F)\cap H$ and $\varphi_\delta(F)\cap H$ are line segments. By \Cref{parallel lem: F and phi epsilon F}, the intersections $\varphi_\epsilon(F)\cap H$, $\varphi_\delta(F)\cap H$ and $F\cap H$ are parallel. As in \Cref{figure:construction}, when $\epsilon$ is small enough, we can take $x',x''\in F\cap H$, $y'\in\varphi_\delta(F)\cap H$ and $z\in\varphi_\epsilon(F)\cap H$ such that
\begin{itemize}
    \item $y',z,x_c$ are on the same line,
    \item $|x'-y'|=\operatorname{inf}\{|a-b|:a\in\operatorname{aff}(F\cap H),\ b\in\operatorname{aff}(\varphi_\delta(F)\cap H)\}$,
    \item $|z-x''|=\operatorname{inf}\{|a-b|:a\in\operatorname{aff}(F\cap H),\ b\in\operatorname{aff}(\varphi_\epsilon(F)\cap H)\}$,
    \item the line $\ell_{x'y'}$ passing through $x',y'$ and the line $\ell_{x''z}$ passing through $x'',z$ are parallel.
\end{itemize}

\begin{figure}[H]
    \centering
    \columnwidth=5cm
    \def\svgwidth{\columnwidth}
\begingroup%
  \makeatletter%
  \providecommand\color[2][]{%
    \errmessage{(Inkscape) Color is used for the text in Inkscape, but the package 'color.sty' is not loaded}%
    \renewcommand\color[2][]{}%
  }%
  \providecommand\transparent[1]{%
    \errmessage{(Inkscape) Transparency is used (non-zero) for the text in Inkscape, but the package 'transparent.sty' is not loaded}%
    \renewcommand\transparent[1]{}%
  }%
  \providecommand\rotatebox[2]{#2}%
  \newcommand*\fsize{\dimexpr\f@size pt\relax}%
  \newcommand*\lineheight[1]{\fontsize{\fsize}{#1\fsize}\selectfont}%
  \ifx\svgwidth\undefined%
    \setlength{\unitlength}{399.47560708bp}%
    \ifx\svgscale\undefined%
      \relax%
    \else%
      \setlength{\unitlength}{\unitlength * \real{\svgscale}}%
    \fi%
  \else%
    \setlength{\unitlength}{\svgwidth}%
  \fi%
  \global\let\svgwidth\undefined%
  \global\let\svgscale\undefined%
  \makeatother%
  \begin{picture}(1,0.68265668)%
    \lineheight{1}%
    \setlength\tabcolsep{0pt}%
    \put(0,0){\includegraphics[width=\unitlength,page=1]{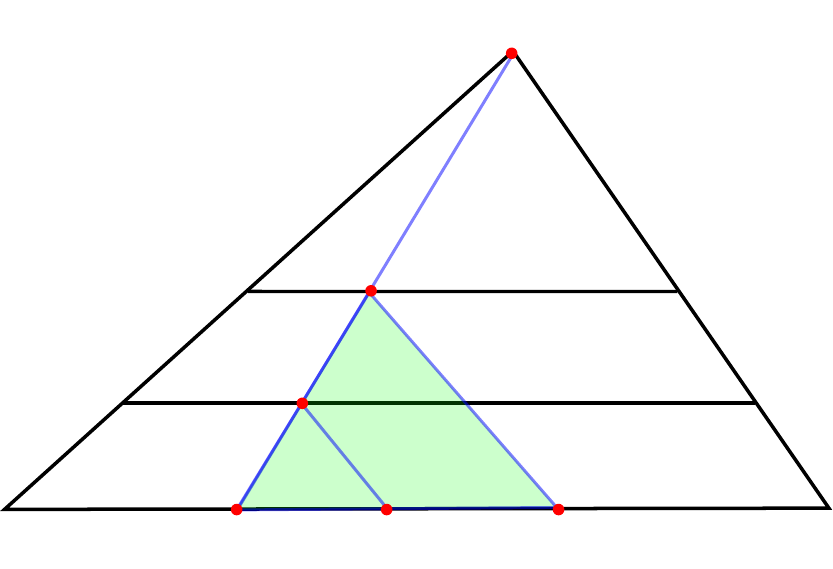}}%
    \put(0.51022085,0.61160646){\color[rgb]{0,0,0}\makebox(0,0)[lt]{\lineheight{1.25}\smash{\begin{tabular}[t]{l}$x_c$\end{tabular}}}}%
    \put(0.3929534,0.35139715){\color[rgb]{0,0,0}\makebox(0,0)[lt]{\lineheight{1.25}\smash{\begin{tabular}[t]{l}$z$\end{tabular}}}}%
    \put(0.65922433,-0.00357832){\color[rgb]{0,0,0}\makebox(0,0)[lt]{\lineheight{1.25}\smash{\begin{tabular}[t]{l}$x''$\end{tabular}}}}%
    \put(0.43330986,-0.00410703){\color[rgb]{0,0,0}\makebox(0,0)[lt]{\lineheight{1.25}\smash{\begin{tabular}[t]{l}$x'$\end{tabular}}}}%
    \put(0.20030483,-0.00572022){\color[rgb]{0,0,0}\makebox(0,0)[lt]{\lineheight{1.25}\smash{\begin{tabular}[t]{l}$x'''$\end{tabular}}}}%
    \put(0.31020004,0.23307196){\color[rgb]{0,0,0}\makebox(0,0)[lt]{\lineheight{1.25}\smash{\begin{tabular}[t]{l}$y'$\end{tabular}}}}%
  \end{picture}%
\endgroup%

    \caption{An example of the truncated hyperpyramid $P\cap H$}
    \label{figure:construction}
\end{figure}

With these points we have
\begin{itemize}
    \item $|x'-y'|\leq|x-y|\leq t$, 
    \item $\chi_{\Delta,\epsilon}^c(x')=\chi_{\Delta,\epsilon}^c(x)=0$, $\chi_{\Delta,\epsilon}^c(y')=\chi_{\Delta,\epsilon}^c(y)=\frac{\delta}{\epsilon}$.
\end{itemize}

Set
\begin{align*}
   &d_F\coloneqq\operatorname{inf} \big\{|a-b|:a\in\operatorname{aff}(F\cap H),\ b\in\operatorname{aff}(\varphi_\epsilon(F)\cap H)\big\},\\
   &D_F\coloneqq\operatorname{inf} \big\{ |a-x_c|:a\in\operatorname{aff}(F\cap H) \big\}.
\end{align*}
Then $|z-x''|=d_F$. Let $x'''$ be the intersection of $F\cap H$ and the ray passing through $y'$ with start point $x_c$, as in \Cref{figure:construction}. Then by similar triangles
\[
\frac{d_F}{D_F}=\frac{|x'''-z|_2}{|x'''-x_c|_2}=\epsilon
\]
and thus $|x''-z|=d_F=\epsilon D_F\geq \epsilon\cdot \operatorname{inrad}(\Delta)$. By \Cref{piecewise affine} and its proof, we have $\chi_{\Delta,\epsilon}^c$ is affine on $P$. Using similar triangles as in \Cref{figure:similar triangles}, we get

\[
\left| \chi_{\Delta,\epsilon}^c(x)-\chi_{\Delta,\epsilon}^c(y) \right|=\left| \chi_{\Delta,\epsilon}^c(x')-\chi_{\Delta,\epsilon}^c(y') \right|=\frac{|x'-y'|_2}{|x''-z|_2}=\frac{|x'-y'|}{|x''-z|}\leq\frac{t}{\epsilon\cdot\operatorname{inrad}(\Delta)},
\]
where $\frac{|x'-y'|_2}{|x''-z|_2}=\frac{|x'-y'|}{|x''-z|}$ holds since $x''',y',z$ are in the same line and $\ell_{x'y'}$ and $\ell_{x''z}$ are parallel.

\begin{figure}[H]
    \centering
    \columnwidth=5.5cm
    \def\svgwidth{\columnwidth}
\begingroup%
  \makeatletter%
  \providecommand\color[2][]{%
    \errmessage{(Inkscape) Color is used for the text in Inkscape, but the package 'color.sty' is not loaded}%
    \renewcommand\color[2][]{}%
  }%
  \providecommand\transparent[1]{%
    \errmessage{(Inkscape) Transparency is used (non-zero) for the text in Inkscape, but the package 'transparent.sty' is not loaded}%
    \renewcommand\transparent[1]{}%
  }%
  \providecommand\rotatebox[2]{#2}%
  \newcommand*\fsize{\dimexpr\f@size pt\relax}%
  \newcommand*\lineheight[1]{\fontsize{\fsize}{#1\fsize}\selectfont}%
  \ifx\svgwidth\undefined%
    \setlength{\unitlength}{386.30373497bp}%
    \ifx\svgscale\undefined%
      \relax%
    \else%
      \setlength{\unitlength}{\unitlength * \real{\svgscale}}%
    \fi%
  \else%
    \setlength{\unitlength}{\svgwidth}%
  \fi%
  \global\let\svgwidth\undefined%
  \global\let\svgscale\undefined%
  \makeatother%
  \begin{picture}(1,0.98402578)%
    \lineheight{1}%
    \setlength\tabcolsep{0pt}%
    \put(0,0){\includegraphics[width=\unitlength,page=1]{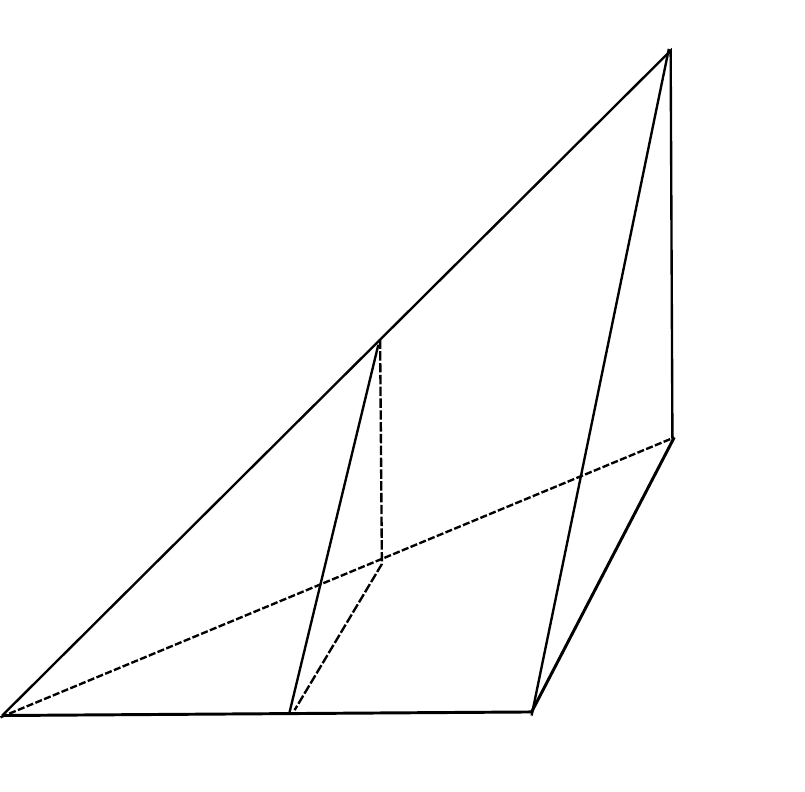}}%
    \put(0.28405471,0.00753336){\color[rgb]{0,0,0}\makebox(0,0)[lt]{\lineheight{1.25}\smash{\begin{tabular}[t]{l}$(x',0)$\end{tabular}}}}%
    \put(0.58526431,0.01196236){\color[rgb]{0,0,0}\makebox(0,0)[lt]{\lineheight{1.25}\smash{\begin{tabular}[t]{l}$(x'',0)$\end{tabular}}}}%
    \put(0.49224403,0.34915376){\color[rgb]{0,0,0}\makebox(0,0)[lt]{\lineheight{1.25}\smash{\begin{tabular}[t]{l}$(y,0)$\end{tabular}}}}%
    \put(0.86263015,0.45048978){\color[rgb]{0,0,0}\makebox(0,0)[lt]{\lineheight{1.25}\smash{\begin{tabular}[t]{l}$(z,0)$\end{tabular}}}}%
    \put(0.19578191,0.60327875){\color[rgb]{0,0,0}\makebox(0,0)[lt]{\lineheight{1.25}\smash{\begin{tabular}[t]{l}$(y,\chi_{\Delta,\epsilon}^c(y))$\end{tabular}}}}%
    \put(0.84743966,0.94544898){\color[rgb]{0,0,0}\makebox(0,0)[lt]{\lineheight{1.25}\smash{\begin{tabular}[t]{l}$(z,1)$\end{tabular}}}}%
    \put(0,0){\includegraphics[width=\unitlength,page=2]{similar_triangulars1.pdf}}%
  \end{picture}%
\endgroup%

    \caption{Similar triangles for computation}
    \label{figure:similar triangles}
\end{figure}

For the general case $x,y\in\Delta\backslash\operatorname{ri}(\Delta_\epsilon)$, the argument is essentially the same and we still get
\[
\left| \chi_{\Delta,\epsilon}^c(x)-\chi_{\Delta,\epsilon}^c(y) \right|\leq\frac{t}{\epsilon\cdot\operatorname{inrad}(\Delta)}.
\]

Therefore,
\[
\rho(\chi_{\Delta,\epsilon}^c,t)=\operatorname{sup}\left\{ 
\left| \chi_{\Delta,\epsilon}^c(x)-\chi_{\Delta,\epsilon}^c(y)\right|: x,y\in\R^d,\ |x-y|\leq t\right\} \leq\frac{t}{\epsilon\cdot\operatorname{inrad}(\Delta)}.\quad\qedsymbol
\]

\subsection{Proof of Koksma's inequality over polytopes}

Now we can prove one of the main tools in this paper, a version of Koksma's inequality over polytopes. 

\begin{thm}\label{Koksma polytope 0}
    Let $f:[0,1]^d\to\R^d$ be a continuous function and let $(x_1,\ldots,x_n)$ be a finite sequence in $[0,1)^d$ with discrepancy $D=D(x_1,\ldots,x_n)$. Let $\Delta\subset[0,1]^d$ be a polytope of dimension $d$. Then for $D$ small enough we have
    \begin{multline*}
        \left|\frac{1}{n}\sum_{\substack{i=1 \\ x_i\in\Delta}}^nf(x_i)-\int_{\Delta} f\d\mu\right|\leq (1+2^{d+1})\rho(f,D^{1/(d+1)})+M\Big( \frac{(1+2^{d+1})D^{1/(2d+2)}}{\operatorname{inrad}(\Delta)}\\
        +
        (4d\sqrt d+1)D^{1/d}e(\Delta)+\frac{2\operatorname{diam}(\Delta)S(\Delta)D^{1/(2d+2)}}{\sqrt{d}} \Big),
    \end{multline*}
    where
    \begin{itemize}
        \item $\rho(f,D^{1/(d+1)})$ is the modulus of continuity of $f$ for $D^{1/(d+1)}$,
        \item $M$ is an upper bound of $|f|$, i.e. $|f(x)|\leq M$ for all $x\in[0,1)^d$,
        \item $e(\Delta)$ is the number of facets of $\Delta$,
        \item $S(\Delta)$ is the surface area of $\Delta$, i.e. it is the sum of the volumes of all facets of $\Delta$.
    \end{itemize}
\end{thm}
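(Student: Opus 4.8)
The plan is to approximate $\chi_\Delta$ by the continuous ``characteristic function'' $\chi_{\Delta,\epsilon}^c$ of (\ref{construction of cts char fun}), apply \Cref{a Koksma in hypercube} to the continuous function $f\cdot\chi_{\Delta,\epsilon}^c$ on $[0,1]^d$, and then control the two discrepancies introduced by replacing $\chi_\Delta$ with $\chi_{\Delta,\epsilon}^c$ — one on the finite sum, one on the integral — finally choosing $\epsilon$ in terms of $D$ to balance the resulting terms.

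Concretely, I would write the quantity to be bounded as a sum of three pieces by inserting $\chi_{\Delta,\epsilon}^c$:
\[
\Bigl(\tfrac1n\sum_{i=1}^n f(x_i)\chi_{\Delta,\epsilon}^c(x_i)-\int_{[0,1)^d}\! f\chi_{\Delta,\epsilon}^c\,d\mu\Bigr)+\tfrac1n\sum_{i=1}^n f(x_i)\bigl(\chi_\Delta-\chi_{\Delta,\epsilon}^c\bigr)(x_i)+\int_{[0,1)^d}\! f\bigl(\chi_{\Delta,\epsilon}^c-\chi_\Delta\bigr)\,d\mu.
\]
Since $\chi_{\Delta,\epsilon}^c$ is continuous (\Cref{piecewise affine}), the first piece is at most $(1+2^{d+1})\rho(f\chi_{\Delta,\epsilon}^c,D^{1/(d+1)})$ by \Cref{a Koksma in hypercube}; using the product estimate $\rho(fg,t)\le\|f\|_\infty\rho(g,t)+\|g\|_\infty\rho(f,t)$ with $\|\chi_{\Delta,\epsilon}^c\|_\infty\le1$ and $\|f\|_\infty\le M$, together with \Cref{prop:estimate of modulus of continuity} (legitimate once $\epsilon$ and $D^{1/(d+1)}$ are small enough), this is at most $(1+2^{d+1})\bigl(\rho(f,D^{1/(d+1)})+\tfrac{MD^{1/(d+1)}}{\epsilon\operatorname{inrad}(\Delta)}\bigr)$.

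For the second piece, $\chi_\Delta-\chi_{\Delta,\epsilon}^c$ is supported on $\Delta\setminus\operatorname{ri}(\Delta_\epsilon)$ and bounded by $1$ there, so this piece is at most $\tfrac Mn\#\{i:x_i\in\Delta\setminus\operatorname{ri}(\Delta_\epsilon)\}$. I would then use \Cref{mid: union of polytopes} to cover $\Delta\setminus\operatorname{ri}(\Delta_\epsilon)$ by the $e(\Delta)$ convex truncated hyperpyramids $P_1,\dots,P_e$, apply the isotropic-discrepancy bound of \Cref{isotropical discrepancy and discrepancy} to each $P_j$, and sum, using that the $P_j$ overlap only in measure zero so that $\sum_j\mu(P_j)=\mu(\Delta\setminus\Delta_\epsilon)$ together with \Cref{volume bet Delta and Delta_epsilon}; this gives
\[
\tfrac1n\#\{i:x_i\in\Delta\setminus\operatorname{ri}(\Delta_\epsilon)\}\le\frac{\epsilon S(\Delta)\operatorname{diam}(\Delta)}{\sqrt d}+e(\Delta)(4d\sqrt d+1)D^{1/d}.
\]
The third piece is bounded directly by $M\mu(\Delta\setminus\Delta_\epsilon)\le\tfrac{M\epsilon S(\Delta)\operatorname{diam}(\Delta)}{\sqrt d}$ via \Cref{volume bet Delta and Delta_epsilon}. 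Adding the three estimates and taking $\epsilon=D^{1/(2d+2)}$, so that the decreasing term $\tfrac{MD^{1/(d+1)}}{\epsilon\operatorname{inrad}(\Delta)}$ and the two $\epsilon$-linear terms all become of size $D^{1/(2d+2)}$, produces exactly the stated inequality, valid for $D$ small enough that $\epsilon$ and $D^{1/(d+1)}$ lie in the range where \Cref{prop:estimate of modulus of continuity} applies.

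The step I expect to be the main obstacle is the second piece: one must count the sample points lying in the thin shell $\Delta\setminus\Delta_\epsilon$, and since this shell is not convex it has to be decomposed into the convex hyperpyramids of \Cref{mid: union of polytopes}, which forces the passage from ordinary to isotropic discrepancy and hence the weaker exponent $1/d$ on $D$; one also has to check that the overlaps among the hyperpyramids are Lebesgue-null so that the measure bound of \Cref{volume bet Delta and Delta_epsilon} can be applied additively. The remaining ingredients — the product rule for the modulus of continuity and the optimisation in $\epsilon$ — are routine.
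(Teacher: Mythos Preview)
Your proposal is correct and follows essentially the same route as the paper's own proof: the same three-term splitting via $\chi_{\Delta,\epsilon}^c$, the same use of \Cref{a Koksma in hypercube} together with the product estimate for moduli of continuity and \Cref{prop:estimate of modulus of continuity} on the first term, the decomposition of the shell into the truncated hyperpyramids of \Cref{mid: union of polytopes} combined with the isotropic-discrepancy bound of \Cref{isotropical discrepancy and discrepancy} on the second term, \Cref{volume bet Delta and Delta_epsilon} on the third term, and the final choice $\epsilon=D^{1/(2d+2)}$. The only cosmetic difference is that the paper, to avoid any double-counting of sample points on the common boundaries of the $P_j$, explicitly passes to pairwise disjoint convex subsets $P_j'$ with $\operatorname{ri}(P_j')=\operatorname{ri}(P_j)$ before invoking the isotropic discrepancy; your version, which simply uses $\#\{i:x_i\in\bigcup_j P_j\}\le\sum_j\#\{i:x_i\in P_j\}$ and then observes that the measure overlaps are null, achieves the same bound.
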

\begin{proof}
    We want to apply \Cref{a Koksma in hypercube} to prove this theorem. To do this, we approximate $f\cdot\chi_\Delta$ by a continuous function.
    For $\epsilon>0$, let $\chi_{\Delta,\epsilon}^c$ be the continuous characteristic function we constructed in \Cref{subsection: Construction of continuous characteristic function}. Then we have
    \begin{multline*}
        \left| \frac{1}{n}\sum_{i=1}^n f(x_i)\chi_\Delta(x_i)-\int_{[0,1]^d}f\chi_\Delta\d\mu \right|\\
        \leq 
        \left| \frac{1}{n}\sum_{i=1}^n f(x_i)\chi_{\Delta,\epsilon}^c(x_i)-\int_{[0,1]^d}f\chi_{\Delta,\epsilon}^c\d\mu \right|\\
        + \frac{1}{n}\left| \sum_{i=1}^n\left( f(x_i)\chi_\Delta(x_i) - f(x_i)\chi_{\Delta,\epsilon}^c(x_i) \right) \right|\\
        + \left| \int_{[0,1]^d} \left( f\chi_\Delta - f\chi_{\Delta,\epsilon}^c \right)\d\mu  \right|.
    \end{multline*}
    Let us first estimate the first difference. In order to apply \Cref{a Koksma in hypercube} to it, we only need to understand the modulus of continuity of $f\cdot\chi_{\Delta,\epsilon}^c$. For any two points $x,y\in[0,1]^d$, by the triangle inequality,
    \begin{align*}
        \left|f(x)\chi_{\Delta,\epsilon}^c(x)-f(y)\chi_{\Delta,\epsilon}^c(y)\right|&
        \leq \left|f(x)\chi_{\Delta,\epsilon}^c(x) - f(y)\chi_{\Delta,\epsilon}^c(x) + f(y)\chi_{\Delta,\epsilon}^c(x)-f(y)\chi_{\Delta,\epsilon}^c(y)\right|\\
        &\leq |f(x)-f(y)| + |f(y)|\left|\chi_{\Delta,\epsilon}^c(x)-\chi_{\Delta,\epsilon}^c(y)\right|.
    \end{align*}
    Since $f$ is continuous on $[0,1]^d$, there exists a positive real number $M$ such that $|f(x)|\leq M$ for all $x$. By \Cref{prop:estimate of modulus of continuity}, for sufficiently small $\epsilon$ and $t$, we have
    \[
    \rho\left(f\cdot\chi_{\Delta,\epsilon}^c,t\right)\leq\rho(f,t) + M\cdot\rho(\chi_{\Delta,\epsilon}^c,t)\leq \rho(f,t)+\frac{Mt}{\epsilon\cdot\operatorname{inrad}(\Delta)}.
    \]
    Therefore, we get
    \[
    \left|\frac{1}{n}\sum_{i=1}^n f(x_i)\chi_{\Delta,\epsilon}^c(x_i)-\int_{[0,1]^d}f\chi_{\Delta,\epsilon}^c\d\mu\right|\leq (1+2^{d+1})\left(\rho(f,D^{1/(d+1)}) +\frac{M D^{1/(d+1)}}{\epsilon\cdot\operatorname{inrad}(\Delta)} \right)
    \]
    for small enough $\epsilon$.

    For the second difference, we know that $\chi_\Delta$ and $\chi_{\Delta,\epsilon}^c$ are only different on $\Delta\backslash\operatorname{ri}(\Delta_\epsilon)$. Therefore,
    \begin{align*}
        \frac{1}{n}\left| \sum_{i=1 }^n\left( f(x_i)\chi_\Delta(x_i) - f(x_i)\chi_{\Delta,\epsilon}^c(x_i) \right) \right|
        &\leq\frac{1}{n} \sum_{\substack{i=1 \\ x_i\in\Delta\backslash\operatorname{ri}(\Delta_\epsilon)}}^n |f(x_i)| \left| \chi_\Delta(x_i) - \chi_{\Delta,\epsilon}^c(x_i) \right|\\
        &\leq M \frac{\sum_{i=1}^n\chi_{\Delta\backslash\operatorname{ri}(\Delta_\epsilon)}(x_i)}{n}.
    \end{align*}
    By \Cref{mid: union of polytopes}, $\Delta\backslash\operatorname{ri}(\Delta_\epsilon)$ is a union of polytopes $P_1,\ldots,P_e$ such that $\operatorname{ri}(P_i)\cap\operatorname{ri}(P_j)=\varnothing$ for $i\neq j$, where $e\coloneqq e(\Delta)$ is the number of factes of $\Delta$. Therefore, we can find convex subsets~ $P_1',\ldots,P_e'$ such that
    \begin{itemize}
        \item $\operatorname{ri}(P_i')=\operatorname{ri}(P_i)$,
        \item $P_i'\cap P_j'=\varnothing$ for all $i\neq j$,
        \item $\Delta\backslash\operatorname{ri}(\Delta_\epsilon)=P_1'\cup\cdots\cup P_e'$.
    \end{itemize}
    Let $J(x_1,\ldots,x_n)$ be the isotropic discrepancy of $(x_1,\ldots,x_n)$. It follows from \Cref{isotropical discrepancy and discrepancy} that
    \begin{align*}
        \left| \frac{\sum_{i=1}^n\chi_{\Delta\backslash\operatorname{ri}(\Delta_\epsilon)}(x_i)}{n}-\mu(\Delta\backslash\Delta_\epsilon) \right|
        &\leq \left| \frac{\sum_{i=1}^n\chi_{P_1'}(x_i)}{n}-\mu(P_1') \right| + \cdots + \left| \frac{\sum_{i=1}^n\chi_{P_e'}(x_i)}{n}-\mu(P_e') \right|\\
        &\leq eJ(x_1,\ldots,x_n)\\
        &\leq e(4d\sqrt{d}+1)D^{1/d}.
    \end{align*}
    Thus
    \[
    \frac{\sum_{i=1}^n\chi_{\Delta\backslash\operatorname{ri}(\Delta_\epsilon)}(x_i)}{n}\leq e(4d\sqrt{d}+1)D^{1/d}+\mu(\Delta\backslash\Delta_\epsilon)
    \]
    and so we find an upper bound for the second difference. Similarly, for the third difference, we have
    \begin{align*}
        \left|\int_{[0,1]^d} \left(f\chi_\Delta - f\chi_{\Delta,\epsilon}^c\right)\d\mu \right|&\leq \int_{\Delta\backslash\Delta_\epsilon} |f|\cdot\left|\chi_\Delta-\chi_{\Delta,\epsilon}^c\right|\d\mu\\
        &\leq M\mu(\Delta\backslash\Delta_\epsilon).
    \end{align*}
    By \Cref{volume bet Delta and Delta_epsilon} we have $\mu(\Delta\backslash\Delta_\epsilon)\leq\epsilon\cdot\operatorname{diam}(\Delta)S(\Delta)/\sqrt{d}$. When $D$ is small enough, we can take~ $\epsilon=D^{1/(2d+2)}$ such that $\epsilon$ is also small enough. Combined with all these inequalities, we get the result.
\end{proof}

\begin{rmk}
    Note that \Cref{Koksma polytope 0} does not imply \Cref{a Koksma in hypercube}. If $\Delta=[0,1]^d$, then we can apply \Cref{a Koksma in hypercube} directly to get an upper bound of the difference without the term
    \[
    M\Big( \frac{(1+2^{d+1})D^{1/(2d+2)}}{\operatorname{inrad}(\Delta)}\\
        +
        (4d\sqrt d+1)D^{1/d}e(\Delta)+\frac{2\operatorname{diam}(\Delta)S(\Delta) D^{1/(2d+2)}}{\sqrt{d}} \Big)
    \]
    in \Cref{Koksma polytope 0}.
\end{rmk}

\begin{cor}\label{Koksma polytope}
    Let $f:[0,1]^d\to\R^d$ be a continuous function and let $(x_1,\ldots,x_n)$ be a finite sequence in $[0,1)^d$ with discrepancy $D=D(x_1,\ldots,x_n)$. Let $\Delta\subset[0,1]^d$ be a polytope of dimension~ $d$. Then for $D$ small enough we have
    \[
    \left|\frac{1}{n}\sum_{\substack{i=1 \\ x_i\in\Delta}}^nf(x_i)-\int_{\Delta} f\d\mu\right|\ll_\Delta \rho(f,D^{1/(d+1)}) + M D^{1/(2d+2)},
    \]
    where $M=\operatorname{max}\left\{|f(x)|:x\in[0,1]^d\right\}$.
\end{cor}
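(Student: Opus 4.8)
The plan is to obtain this as an immediate consequence of \Cref{Koksma polytope 0}, so the argument should be very short. First I would invoke \Cref{Koksma polytope 0} with the same $f$, $(x_1,\ldots,x_n)$ and $\Delta$, taking $M=\operatorname{max}\{|f(x)|:x\in[0,1]^d\}$, which is finite since $f$ is continuous on the compact set $[0,1]^d$ and is an upper bound for $|f|$ on $[0,1)^d$ as required there. This produces the explicit estimate of \Cref{Koksma polytope 0}: the main term $(1+2^{d+1})\rho(f,D^{1/(d+1)})$ plus $M$ times a sum of three contributions involving $\operatorname{inrad}(\Delta)$, the number of facets $e(\Delta)$, $\operatorname{diam}(\Delta)$, the surface area $S(\Delta)$, and the powers $D^{1/(2d+2)}$ and $D^{1/d}$.

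Next I would absorb constants into the Vinogradov notation. The quantities $\operatorname{inrad}(\Delta)$ (which is strictly positive precisely because $\operatorname{dim}(\Delta)=d$, so that $\Delta$ has nonempty interior), $e(\Delta)$, $\operatorname{diam}(\Delta)$ and $S(\Delta)$ depend only on $\Delta$, while the numerical factors $1+2^{d+1}$, $4d\sqrt d+1$ and $2/\sqrt d$ depend only on $d$; all of these can be swallowed into $\ll_\Delta$. The one genuine — but entirely routine — observation is that the two exponents of $D$ occurring in the bracket satisfy $1/d\geq 1/(2d+2)$ for every $d\geq 1$, so that $D^{1/d}\leq D^{1/(2d+2)}$ as soon as $D\leq 1$; hence each of the three terms in the bracket is $\ll_\Delta D^{1/(2d+2)}$. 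Collecting these reductions turns the bound of \Cref{Koksma polytope 0} into $\rho(f,D^{1/(d+1)})+M D^{1/(2d+2)}$ up to a constant depending only on $\Delta$ (and on the ambient dimension $d$, which is itself determined by $\Delta$), which is exactly the claimed inequality.

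There is essentially no obstacle here: the corollary is a bookkeeping repackaging of \Cref{Koksma polytope 0}. The only points requiring a word of care are that ``$D$ small enough'' must be interpreted to be at least small enough both for \Cref{Koksma polytope 0} to apply and for $D\leq 1$ to hold — the latter being automatic, since a discrepancy always lies in $[0,1]$ — and that $\operatorname{inrad}(\Delta)$ does not appear in a denominator that could vanish, which is guaranteed by the full-dimensionality hypothesis $\operatorname{dim}(\Delta)=d$.
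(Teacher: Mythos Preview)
Your proposal is correct and matches the paper's approach: the paper states this corollary immediately after \Cref{Koksma polytope 0} with no proof, indicating it is exactly the bookkeeping repackaging you describe. Your observations about absorbing the $\Delta$-dependent constants and using $D^{1/d}\leq D^{1/(2d+2)}$ for $D\leq 1$ are precisely what is needed.
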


\newpage

\section{Proof of the Main Theorem}\label{section proof}

In this section \Cref{main thm} is proved. 

As in \Cref{main thm}, we let $d,k$ be integers, let $P\in\overline{\Q}[T_1^{\pm1},\ldots,T_d^{\pm 1}]\backslash\{0\}$ be an essentially atoral Laurent polynomial with at most $k$ nonzero terms, and let $\Delta\subset[0,1)^d$ be a $d$-dimensional polytope. 

For a torsion point $\omega\in\mathbb{G}_m^d$, according to Laurent's Theorem \cite{Laurent}, if $\delta(\omega)$ is large in terms of $d$ and $P$, then $P(\omega^\sigma)\neq 0$ for all $\sigma\in\operatorname{Gal}(\Q(\omega)/\Q)$. We fix a torsion point $\omega\in\mathbb{G}_m^d$ with~ $\delta(\omega)$ sufficiently large such that $P(\omega^\sigma)\neq 0$ for all $\sigma\in\operatorname{Gal}(\Q(\omega)/\Q)$. Denote $n\coloneqq[\Q(\omega):\Q]$. For the Galois orbit of $\omega$, we get a finite set $\{x_1,\ldots,x_n\}\subset[0,1)^d$ such that
\[
\{e(x_1),\ldots,e(x_n)\}=\{\omega^\sigma:\sigma\in\operatorname{Gal}(\Q(\omega)/\Q)\}.
\]
Then $P(e(x_i))\neq 0$ for $1\leq i\leq n$. Let $D=D(x_1,\ldots,x_n)$ (resp. $J=J(x_1,\ldots,x_n)$) be the discrepancy (resp. isotropic discrepancy) of the sequence $(x_1,\ldots,x_n)$. Our goal is to estimate the difference
\[
\left| \frac{1}{n}\sum_{\substack{i=1 \\ x_i\in\Delta}}^n\operatorname{log}|P(e(x_i))|-\int_\Delta\operatorname{log}|P(e(x))|\d \mu(x) \right|.
\]

We first examine the easiest case that $P$ is a constant. In this case,
\[
\left| \frac{1}{n}\sum_{\substack{i=1 \\ x_i\in\Delta}}^n\operatorname{log}|P(e(x_i))|-\int_\Delta\operatorname{log}|P(e(x))|\d \mu(x) \right|\ll_P \left| \frac{\sum_{i=1}^n\chi_\Delta(x_i)}{n}-\mu(\Delta)\right|\leq J
\]
by the definition of $J$. By \Cref{isotropical discrepancy and discrepancy} and \Cref{discrepancy over Galois orbit}, we have
\[
J\ll_d D^{1/d}\ll_d \left(\frac{(\operatorname{log}\delta(\omega))^d}{\delta(\omega)^{1/2}}\right)^{1/d}\ll_d \delta(\omega)^{-1/(4d)}
\]
as $\delta(\omega)\to\infty$. The desired result follows.

Now suppose $P\in\overline{\Q}[T_1^{\pm1},\ldots,T_d^{\pm 1}]\backslash\overline{\Q}$. We may assume that $P$ is a polynomial, as multiplying by a monomial does not change the difference between the discrete sum and the integral stated above. Let $c>0$ be the largest maximum norm among the coefficients of $P$. Then there exists a polynomial $Q\in\overline{\Q}[T_1,\ldots,T_d]\backslash\overline{\Q}$ such that $P=cQ$ with at most $k$ non-zero terms and the maximum norm of the coefficient vector of $Q$ is $1$. Hence,
\begin{multline*}
    \left| \frac{1}{n}\sum_{\substack{i=1 \\ x_i\in\Delta}}^n\operatorname{log}|P(e(x_i))|-\int_\Delta\operatorname{log}|P(e(x))|\d \mu(x) \right|\\
    \leq \left| \frac{1}{n}\sum_{\substack{i=1 \\ x_i\in\Delta}}^n\operatorname{log}c - \int_\Delta\operatorname{log}c\d \mu \right| + \left| \frac{1}{n}\sum_{\substack{i=1 \\ x_i\in\Delta}}^n\operatorname{log}|Q(e(x_i))|-\int_\Delta\operatorname{log}|Q(e(x))|\d \mu(x) \right|.
\end{multline*}
Note that the first difference above can also be estimated by isotropic discrepancy:
\[
\left| \frac{1}{n}\sum_{\substack{i=1 \\ x_i\in\Delta}}^n\operatorname{log}c - \int_\Delta\operatorname{log}c\d \mu \right|=|\operatorname{log}c|\left|\frac{\sum_{i=1}^n\chi_{\Delta}(x_i)}{n}-\mu(\Delta)\right|\leq|\operatorname{log}c|J\ll_P J\ll_d\delta(\omega)^{-1/(4d)}.
\]
Therefore,
\begin{multline}\label{pf of main thm: P and Q}
    \left| \frac{1}{n}\sum_{\substack{i=1 \\ x_i\in\Delta}}^n\operatorname{log}|P(e(x_i))|-\int_\Delta\operatorname{log}|P(e(x))|\d \mu(x) \right|\\
    \ll_P \delta(\omega)^{-1/(4d)} + \left| \frac{1}{n}\sum_{\substack{i=1 \\ x_i\in\Delta}}^n\operatorname{log}|Q(e(x_i))|-\int_\Delta\operatorname{log}|Q(e(x))|\d \mu(x) \right|.
\end{multline}

Suppose $k\geq 2$. We set $\operatorname{log}_r(x)=\operatorname{log}\operatorname{max}\{r,x\}$ for $r>0$. By \Cref{discrepancy over Galois orbit}, $\delta(\omega)$ is large implies that $D$ is small. At the end of proof, we will take $r$ in terms of $D$ such that the small~ $D$ implies that $r$ is also small.

By the triangle inequality,

\begin{multline}\label{eq:tri ineq}
    \left| \frac{1}{n}\sum_{\substack{i=1 \\ x_i\in\Delta}}^n\operatorname{log}|Q(e(x_i))|-\int_\Delta\operatorname{log}|Q(e(x))|\d \mu(x) \right|\\
    \leq \left| \frac{1}{n}\sum_{\substack{i=1 \\ x_i\in\Delta}}^n\operatorname{log}_r|Q(e(x_i))|-\int_\Delta \operatorname{log}_r|Q(e(x))|\d \mu(x) \right|\\
    +\frac{1}{n}\left| \sum_{i=1}^n\big( \operatorname{log}|Q(e(x_i))| - \operatorname{log}_r|Q(e(x_i))|\big) \right|\\
    +\int_\Delta\big| \operatorname{log}|Q(e(x))| - \operatorname{log}_r|Q(e(x))| \big| \d \mu(x) .
\end{multline}

For the first difference, as $D$ is small, we can apply \Cref{Koksma polytope} to it. In order to apply \Cref{Koksma polytope}, we need to estimate the maximum value of $\operatorname{log}_r|Q(e(x))|$ on $[0,1]^d$. As~ $|Q(e(x))|$ is continuous on the compact set $[0,1]^d$, we have that $|Q(e(x))|$ is bounded. Taking $r$ sufficiently small, we have $\big|\operatorname{log}_r|Q(e(x))|\big|\leq|\operatorname{log}r|$ for all $x\in[0,1]^d$. By \Cref{Koksma polytope},

\[
\left| \frac{1}{n}\sum_{\substack{i=1 \\ x_i\in\Delta}}^n\operatorname{log}_r|Q(e(x_i))|-\int_\Delta \operatorname{log}_r|Q(e(x))|\d \mu(x) \right|
    \ll_\Delta \rho(\operatorname{log}_r|Q(e(x))|;D^{1/(d+1)})+(-\operatorname{log}r)D^{1/(2d+2)}
\]
for $D$ small enough. The modulus of continuity of the function $\operatorname{log}_r|Q(e(x))|$ can be estimated by \cite[Lemma 7.5]{DimitrovHabegger}, which states that 
\[
\rho(\operatorname{log}_r|Q(e(x))|;t)\ll_{d,k}\frac{\operatorname{deg(Q)t}}{r}
\]
for any $r\in(0,1]$ and $t>0$. In our setting, we conclude that
\[
\rho(\operatorname{log}_r|Q(e(x))|;D^{1/(d+1)})\ll_{P}\frac{D^{1/(d+1)}}{r}
\]
and thus
\begin{equation}\label{eq:first difference}
    \left| \frac{1}{n}\sum_{\substack{i=1 \\ x_i\in\Delta}}^n\operatorname{log}_r|Q(e(x_i))|-\int_\Delta \operatorname{log}_r|Q(e(x))|\d \mu(x) \right|\ll_{\Delta,Q}\frac{D^{1/(d+1)}}{r} + (-\operatorname{log}r)D^{1/(2d+2)}.
\end{equation}

For the second difference, the triangle inequality gives
\[
\frac{1}{n}\left| \sum_{i=1}^n \big( \operatorname{log}|Q(e(x_i))| - \operatorname{log}_r|Q(e(x_i))| \big) \right|\\
    \leq\frac{1}{n}\sum_{\substack{i=1 \\ |Q(e(x_i))|<r}}^n\big| \operatorname{log}_r|Q(e(x_i))| \big| + \frac{1}{n}\sum_{\substack{i=1 \\ |Q(e(x_i))|<r}}^n\big| \operatorname{log}|Q(e(x_i))| \big|.
\]
By definition, $\operatorname{log}_r|Q(e(x))|=\operatorname{log}r$ if $|Q(e(x))|<r$. Hence, 
\[
\frac{1}{n}\sum_{\substack{i=1 \\ |Q(e(x_i))|<r}}^n\big| \operatorname{log}_r|Q(e(x_i))| \big|=\frac{\sharp\{i:|Q(e(x_i))|<r\}\ |\operatorname{log}r|}{n}.
\]
This can be further estimated using \cite[Lemma 7.4]{DimitrovHabegger}, which indicates that
\[
\frac{\sharp\left\{i:|Q(e(x_i))|\leq r\right\}}{n}\ll_{d,k} r^{1/(2k)}+\operatorname{deg}(Q)\frac{D^{1/(d+1)}}{r}.
\]
Thus,
\[
\frac{1}{n}\sum_{\substack{i=1 \\ |Q(e(x_i))|<r}}^n\big| \operatorname{log}_r|Q(e(x_i))| \big|\ll_Q(-\operatorname{log}r)\left(r^{1/(2k)}+\frac{D^{1/(d+1)}}{r}\right).
\]
Note that \cite[Lemma 7.7]{DimitrovHabegger} shows
\[
\frac{1}{n}\sum_{\substack{i=1 \\ |Q(e(x_i))|<r}}^n\big| \operatorname{log}|Q(e(x_i))| \big|\ll_{d,k}\frac{(\operatorname{deg}Q)D^{1/(d+1)}}{r^2}+r^{1/(4k)}+\left| m(Q)-\frac{1}{n}\sum_{i=1}^n\operatorname{log}|Q(e(x_i))| \right|.
\]
We conclude from the main theorem of \cite{DimitrovHabegger}, as stated in \Cref{DH main thm}, that
\[
\left| m(Q)-\frac{1}{n}\sum_{i=1}^n\operatorname{log}|Q(e(x_i))| \right|\ll_Q \delta(\omega)^{-\gamma}
\]
for a constant $\gamma=\gamma(d,k)>0$. Putting the above inequalities together, for $r$ small enough, we get
\begin{equation}\label{eq:second difference}
    \frac{1}{n}\left| \sum_{i=1}^n \operatorname{log}|Q(e(x_i))| - \operatorname{log}_r|Q(e(x_i))| \right|\ll_Q \delta(\omega)^{-\gamma} + r^{1/(4k)} + \frac{D^{1/(d+1)}}{r^2}.
\end{equation}

Let $\mathcal{S}_Q(r)=\left\{x\in[0,1)^d:|Q(e(x))|<r\right\}$. Similarly, for the third difference, we have
\[
\int_\Delta\big| \operatorname{log}|Q(e(x))| - \operatorname{log}_r|Q(e(x))| \big| \d \mu(x)
    \leq\int_{\mathcal{S}_Q(r)}\big| \operatorname{log}_r|Q(e(x))| \big| \d \mu(x) + \int_{\mathcal{S}_Q(r)}\big| \operatorname{log}|Q(e(x))| \big| \d \mu(x).
\]
By \cite[Lemma A.3(i)]{DimitrovHabegger}, the Lebesgue volume of $\mathcal{S}_Q(r)$ is $\ll_{d,k} r^{1/(2k-2)}$. Applying this and using $\operatorname{log}_r|Q(e(x))|=\operatorname{log}r$ if $|Q(e(x))|<r$, we get
\[
\int_{\mathcal{S}_Q(r)}\big| \operatorname{log}_r|Q(e(x))| \big| \d x\leq (-\operatorname{log}r)\cdot\mu(\mathcal{S}_Q(r))\ll_{d,k}(-\operatorname{log}r)r^{1/(2k-2)}.
\]
Moreover, it is shown in \cite[Lemma A.4]{DimitrovHabegger} that
\[
\int_{\mathcal{S}_Q(r)}\big| \operatorname{log}|Q(e(x))| \big| \d\mu (x)\ll_{d,k}r^{1/(4k-4)},
\]
and so for $r$ small enough we have
\begin{equation}\label{eq:third difference}
    \int_\Delta\big| \operatorname{log}|Q(e(x))| - \operatorname{log}_r|Q(e(x))| \big| \d\mu(x)\ll_{d,k} r^{1/(4k-4)}.
\end{equation}

Choose $r=D^{1/(4d+4)}$ now. Then gathering the inequalities (\ref{eq:tri ineq}), (\ref{eq:first difference}), (\ref{eq:second difference}) and (\ref{eq:third difference}) together we have
\[
\left| \frac{1}{n}\sum_{\substack{i=1 \\ x_i\in\Delta}}^n\operatorname{log}|Q(e(x_i))|-\int_\Delta\operatorname{log}|Q(e(x))|\d \mu(x) \right|\ll_{\Delta,Q} \delta(\omega)^{-\gamma} + D^{1/(16k(d+1))}
\]
if $\delta(\omega)$ is sufficiently large. Recall that $P=cQ$ for $c>0$. The conclusion follows from (\ref{pf of main thm: P and Q}) and \Cref{discrepancy over Galois orbit}. \qedsymbol

\begin{rmk}\label{rmk: kappa}
    An explicit value of $\kappa$ in \Cref{main thm} can be deduced from this proof. Indeed, by \Cref{discrepancy over Galois orbit},
    \[
    D^{1/(16k(d+1))}\ll_d\left(\frac{(\operatorname{log}\delta(\omega))^d}{\delta(\omega)^{1/2}}\right)^{1/(16k(d+1))}\ll_d \left( \frac{\delta(\omega)^{1/4}}{\delta(\omega)^{1/2}}
 \right)^{1/(16k(d+1))}=\delta(\omega)^{-1/(64k(d+1))}.
    \]
    Therefore, one can choose
    \[
\kappa=\operatorname{min}\left\{\gamma, \frac{1}{64k(d+1)} \right\}
\]
and the proof of \cite[Theorem 8.8]{DimitrovHabegger} contains a way to determine $\gamma=\gamma(d,k)$. This approach is summarized in \Cref{Appendix estimate log} and presented as \Cref{algorithmkappa}.
\end{rmk}

\newpage

\section{Application}\label{section application}

In this section, we answer \cite[Question 6.2]{RM}, posed by Gualdi and Sombra, by applying \Cref{main thm}. Let us introduce the background of this question. The paper \cite{RM} studies the height of the intersection of
\[
L=Z(x_0+x_1+x_2)\subset\mathbb{P}^2(\overline{\Q})
\]
with its translate $\omega L$ by a torsion point $\omega=(\omega_1,\omega_2)\in\mathbb{G}_m^2(\overline{\Q})\cong(\overline{\Q}^{\times})^2$, where
\[
\omega L=Z(x_0+\omega_1^{-1}x_1+\omega_2^{-1}x_2).
\]
When $\omega$ is nontrivial, i.e. $\omega\neq(1,1)$, the intersection $L\cap\omega L$ consists of the point
\[
\mathcal{P}(\omega)=[\omega_2^{-1}-\omega_1^{-1}:1-\omega_2^{-1}:\omega_1^{-1}-1]\in\mathbb{P}^2(\overline{\Q}).
\]
Let us also denote by $\mathcal{P}(\omega)$ the vector of homogeneous coordinates $(\omega_2^{-1}-\omega_1^{-1},1-\omega_2^{-1},\omega_1^{-1}-1)$. We fix an embedding $\iota:\overline{\Q}\hookrightarrow\C$. Let $d=\operatorname{ord}(\omega)$. Then \cite[Lemma 2.3]{RM} gives an explicit formula for the archimedean height of $\mathcal{P}(\omega)$:
\[
\mathrm{h}_\infty(\mathcal{P}(\omega))=\frac{1}{\phi(d)}\sum_{k\in(\Z/d\Z)^\times}\operatorname{log}\operatorname{max}(|\iota(\omega_2^k)-\iota(\omega_1^k)|,|\iota(\omega_2^k)-1|,|\iota(\omega_1^k)-1|),
\]
where $\infty$ is the unique archimedean place of $\Q$ and $|\cdot|$ denotes the usual absolute value on~ $\C$.

We will later consider a sequence in $\mathbb{G}_m^2(\overline{\Q})$ that avoids all proper algebraic subgroups of $\mathbb{G}_m^2(\overline{\Q})$. According to \cite[Chapter 3]{Bombieri_Gubler_2006}, over any field $F$ of characteristic $0$, a one-dimensional algebraic subgroup of $\mathbb{G}_m^2(F)$ is of the form
\begin{equation}\label{notation of algebraic subgroup}
    H_a\coloneqq\{\omega\in\mathbb{G}_m^2(F):\omega^a=1\}
\end{equation}
for an $a\in\Z^2\backslash\{0\}$. We say a sequence $(\omega_\ell)_{\ell\geq 1}$ in $\mathbb{G}^2_m(\overline{\Q})$ is \textit{strict} if for all $a=(a_1,a_2)\in\Z^2\backslash\{0\}$, there exists an integer $\ell_0>0$ such that $\omega_\ell$ does not lie in $H_a$ for $\ell>\ell_0$, which means
\[
\omega_{\ell,1}^{a_1}\omega_{\ell,2}^{a_2}\neq 1,\quad\forall\ell>\ell_0,
\]
where $\omega_\ell=(\omega_{\ell,1},\omega_{\ell,2})$. Note that for a sequence $(\omega_\ell)_{\ell\geq 1}$ in $\mathbb{G}_m^2(\overline{\Q})$, the strictness degree~ $\delta(\omega_\ell)\to\infty$ as $\ell\to\infty$ if and only if $(\omega_\ell)_{\ell\geq 1}$ is strict.

Let $(\omega_\ell)_{\ell\geq1}$ be a strict sequence of non-trivial torsion points in $\mathbb{G}_m^2(\overline{\Q})$ now. It is shown in \cite[Proposition 4.1]{RM} that the limit of archimedean heights $\mathrm{h}_\infty(\mathcal{P}(\omega_\ell))$ is an integral by equidistribution theorems. Indeed, fix an integer $\ell$ and write $n_\ell=\phi(\operatorname{ord}(\omega_\ell))$ for simplicity, set
\[
\left\{ e(\alpha_{\ell,1}),\ldots,e(\alpha_{\ell,n_\ell}) \right\}\coloneqq\left\{\iota(\omega_\ell^k):k\in(\Z/\operatorname{ord}(\omega_\ell)\Z)^\times\right\},
\]
where $e(\alpha_{\ell,k})=(e^{\mathrm{i} 2\pi x_{\ell,k}},\ e^{\mathrm{i} 2\pi y_{\ell,k}})$ for each $\alpha_{\ell,k}=(x_{\ell,k},y_{\ell,k})\in[0,1)^2$. Let
\begin{align*}
    f:[0,1)^2&\to\R\cup\{-\infty\},\\
    (x,y)&\mapsto\operatorname{log}\operatorname{max}\left\{ 
\left|e^{\mathrm{i} 2\pi(x-y)}-1\right|,\ \left|e^{\mathrm{i} 2\pi x}-1\right|,\ \left|e^{\mathrm{i} 2\pi y}-1\right| \right\}.
\end{align*}

Then we have 
\[
\lim\limits_{\ell\to\infty}\mathrm{h}_\infty\left( \mathcal{P}(\omega_\ell) \right)=\lim\limits_{\ell\to\infty}\frac{1}{n_\ell}\sum_{k=1}^{n_\ell}f(\alpha_{\ell,k})=\int_{[0,1)^2}f\d\mu.
\]

In \cite[Question 6.2]{RM}, a quantitative version of this equidistribution result is requested. The following is our solution.

Let $\omega\in\mathbb{G}_m^2(\overline{\Q})$ be a non-trivial torsion point. For the Galois orbit of $\omega$, we get a finite set~ $\{\alpha_1,\ldots,\alpha_n\}\subset[0,1)^2$ such that
\[
\left\{ e(\alpha_{1}),\ldots,e(\alpha_{n}) \right\}\coloneqq\left\{\iota(\omega^k):k\in(\Z/\operatorname{ord}(\omega)\Z)^\times\right\},
\]
where $n=\phi(\operatorname{ord}(\omega))$ and $\alpha_k=(x_k,y_k)\in[0,1)^2$. Let us estimate $\left|h(\mathcal{P}(\omega))-\int_{[0,1)^2}f\d\mu\right|$.

As in \cite[Section 5]{RM}, the square $[0,1)^2$ has a partition consisting of $12$ closed triangles as in \Cref{fig:partition}. Among these $12$ triangles, in $4$ of them, $f(x,y)=\operatorname{log}|e^{i 2\pi x}-1|$; in $4$ of them,~ $f(x,y)=\operatorname{log}\left|e^{i 2\pi y}-1\right|$; in the remaining $4$ triangles, $f(x,y)=\operatorname{log}\left|e^{i 2\pi(x-y)}-1\right|$. The first (resp. second / third) group of triangles are marked by green (resp. blue / red) in the figure. We denote these $12$ subsets as $\Omega_{ij}$ for $1\leq i\leq 3$ and $1\leq j\leq 4$, where $\Omega_{11},\ldots,\Omega_{14}$ correspond to green triangles, $\Omega_{21},\ldots,\Omega_{24}$ correspond to blue triangles and $\Omega_{31},\ldots,\Omega_{34}$ correspond to red triangles.

\begin{figure}[H]
    \centering
        \begin{tikzpicture}[scale=0.70]

    \begin{scope}[xshift=-25mm]
    \draw[fill=green!25] (3,0) -- (0,0) -- (4,2) -- cycle; \node at (2.4,0.6) {$\scriptscriptstyle \Omega_{11}$};
    \draw[fill=red!25] (3,0) -- (6,0) -- (4,2)  -- cycle; \node at (4.3,0.6) {$\scriptscriptstyle \Omega_{31}$};
    \draw[fill=blue!25]  (6,0) -- (4,2) -- (6,3)  -- cycle; \node at (5.3,1.6) {$\scriptscriptstyle \Omega_{21}$};                                
    \draw[fill=blue!25]  (6,3) -- (4,2) -- (6,6)  -- cycle; \node at (5.4,3.4) {$\scriptscriptstyle\Omega_{22}$};                                
    \draw[fill=red!25]  (4,2) -- (6,6) -- (3,3) -- cycle; \node at (4.1,3.4) {$\scriptscriptstyle\Omega_{32}$};                                
    \draw[fill=green!25]  (4,2) --  (0,0) -- (3,3) -- cycle; \node at (2.85,2) {$\scriptscriptstyle\Omega_{12}$};
    \draw[fill=red!25] (3,3) -- (0,0) -- (2,4) -- cycle; \node at (2.2,3) {$\scriptscriptstyle\Omega_{33}$};
    \draw[fill=blue!25]  (0,0) -- (2,4) -- (0,3) -- cycle; \node at (0.7,2.7) {$\scriptscriptstyle \Omega_{23}$};
    \draw[fill=blue!25]  (0,3) -- (2,4) -- (0,6) -- cycle; \node at (0.8,4.3) {$\scriptscriptstyle \Omega_{24}$};
    \draw[fill=red!25]  (0,6) -- (2,4) -- (3,6) -- cycle; \node at (1.8,5.2) {$\scriptscriptstyle \Omega_{34}$};
    \draw[fill=green!25] (6,6) -- (2,4) --  (3,6)  -- cycle; \node at (3.6,5.5) {$\scriptscriptstyle \Omega_{13}$};
    \draw[fill=green!25]  (2,4) -- (3,3) -- (6,6) -- cycle; \node at (3.2,4.1) {$\scriptscriptstyle \Omega_{14}$};
    \end{scope}
    \end{tikzpicture}
    \caption{The partition of $[0,1]^2$ into triangles}
    \label{fig:partition}
\end{figure}
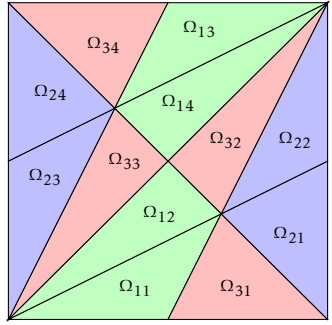

Furthermore, for any point in the boundaries of these triangles, its image under the complex exponential map
\[
[0,1)^2\to\mathbb{G}_m^2(\C),\quad (x,y)\mapsto(e^{\mathrm{i}2\pi x},e^{\mathrm{i}2\pi y})
\]
lies in one of the algebraic subgroups $H_{(1,0)},H_{(0,1)},H_{(1,1)},H_{(1,-1)},H_{(2,-1)},H_{(1,-2)}$, as defined by the notation $H_a$ in (\ref{notation of algebraic subgroup}). Therefore, if $\delta(\iota(\omega^k))\geq 3$, then none of the points $\alpha_1,\ldots,\alpha_n$ lie on the boundaries of these $12$ triangles. It follows that
\[
f(\alpha_k)=\sum_{\substack{1\leq i\leq 3 \\ 1\leq j\leq 4}}f\cdot\chi_{\Omega_{ij}}(\alpha_k),\quad\forall 1\leq k\leq n
\]
if $\delta(\omega)\geq 3$, and thus
\begin{equation}\label{ineq: f leq sum over ij}
    \left| \frac{1}{n}\sum_{k=1}^nf(\alpha_k)-\int_{[0,1)^2}f\d\mu \right|\leq\sum_{\substack{1\leq i\leq 3 \\ 1\leq j\leq 4}}\left| \frac{1}{n}\sum_{k=1}^nf\cdot\chi_{\Omega_{ij}}(\alpha_k)-\int_{[0,1)^2}f\cdot\chi_{\Omega_{ij}}\d\mu \right|
\end{equation}
if $\delta(\omega)\geq 3$.

Set $P_1(T_1,T_2)=T_1-1$, $P_2(T_1,T_2)=T_2-1$ and $P_3(T_1,T_2)=T_1-T_2$. In $\Omega_{11},\ldots,\Omega_{14}$ we have $f(x,y)=\operatorname{log}|P_1(e(x,y))|$; in $\Omega_{21},\ldots,\Omega_{24}$ we have $f(x,y)=\operatorname{log}|P_2(e(x,y))|$; in $\Omega_{31},\ldots,\Omega_{34}$ we have $f(x,y)=\operatorname{log}|P_3(e(x,y))|$. The three polynomials $P_1,\ P_2$ and $P_3$ are all binomials and, therefore, are essentially atoral by \Cref{P i are essentially atoral}. Hence, by \Cref{main thm}, if $\delta(\omega)$ is sufficiently large, then there exists a constant $\kappa>0$ such that
\begin{equation}\label{ineq: sum over ij}
    \left| \frac{1}{n}\sum_{k=1}^nf\cdot\chi_{\Omega_{ij}}(\alpha_k)-\int_{[0,1)^2}f\cdot\chi_{\Omega_{ij}}\d\mu \right| \ll \delta(\omega)^{-\kappa}
\end{equation}
for all $1\leq i\leq 3$ and $1\leq j\leq 4$, where the constant is specified by $P_1,P_2,P_3$ and the choice of~ $\Omega_{ij}$. Moreover, by \Cref{rmk: kappa}, we can choose
\[
\kappa=\operatorname{min}\left\{\gamma(2,2),\frac{1}{64\times 2\times 3}\right\},
\]
where $\gamma(2,2)$ can be calculated to be $1/(2^{61}\times 5^5)$ by \Cref{algorithmkappa} with the input $\gamma(1,2)=\gamma(1,4)=1/2$. Thus, $\kappa=1/(2^{61}\times 5^5)$. It follows from (\ref{ineq: f leq sum over ij}) and (\ref{ineq: sum over ij}) that
\[
\left| \frac{1}{n}\sum_{k=1}^nf(\alpha_k)-\int_{[0,1)^2}f\d\mu \right|\ll \delta(\omega)^{-1/(2^{61}\times 5^5)}.
\]
This provides the convergence speed of the archimedean part of the height of the point $\mathcal{P}(\omega)$ as $\delta(\omega)\to\infty$ and answers \cite[Question 6.2]{RM}. Moreover, it is shown in \cite[Proposition 6.1]{RM} that $\int_{[0,1)^2}f\d\mu=\frac{2\zeta(3)}{3\zeta(2)}$, where~ $\zeta$ is the Riemann zeta function. The non-archimedean part of the height of $\mathcal{P}(\omega)$ was given in \cite[Corollary 3.4]{RM}, which states that
\[
\sum_{v_p\in M_\Q\backslash\{\infty\}}\mathrm{h}_{v_p}(\mathcal{P}(\omega))=-\frac{\Lambda(\operatorname{ord}(\omega))}{\phi(\operatorname{ord}(\omega))},
\]
where $\Lambda$ is the Von Mangoldt function. More precisely, if $\operatorname{ord}(\omega)$ is not a prime power, then the non-archimedean part of the height of $\mathcal{P}(\omega)$ is $0$; if $\operatorname{ord}(\omega)=p^e$ for a prime number $p$ and a positive integer $e$, then
\[
\sum_{v_p\in M_\Q\backslash\{\infty\}}\mathrm{h}_{v_p}(\mathcal{P}(\omega))=-\frac{\operatorname{log}p}{p^{e-1}(p-1)}.
\]
In the case where $d\coloneqq\operatorname{ord}(\omega)=p^e$, we have
\[
\left|\sum_{v_p\in M_\Q\backslash\{\infty\}}  \mathrm{h}_{v_p}(\mathcal{P}(\omega))\right|=  \frac{\operatorname{log}p}{p^{e-1}(p-1)}=\frac{p}{e(p-1)}\frac{\operatorname{log}d}{d}\leq\frac{2\operatorname{log}d}{d}.
\]
Since $\delta(\omega)\leq d$ and the function $x\mapsto\frac{\operatorname{log}x}{x}$ decreases as $x\to\infty$, we get
\[
\left| \sum_{v_p\in M_\Q\backslash\{\infty\}}  \mathrm{h}_{v_p}(\mathcal{P}(\omega)) \right| \ll \frac{2\operatorname{log}\delta(\omega)}{\delta(\omega)} \ll \delta(\omega)^{-1/2}
\]
and so $\sum_{v_p\in M_\Q\backslash\{\infty\}}  \mathrm{h}_{v_p}(\mathcal{P}(\omega))\to 0$ as $\operatorname{\delta}(\omega)\to\infty$. In summary, if $\delta(\omega)$ is sufficiently large, then we have
\[
\left| \mathrm{h}(\mathcal{P}(\omega)) - \frac{2\zeta(3)}{3\zeta(2)} \right|\ll \delta(\omega)^{-1/(2^{61}\times 5^5)}
\]
as $\delta(\omega)\to\infty$. Therefore, if $(\omega_\ell)_{\ell\geq 1}$ is a strict sequence of non-trivial torsion points in $\mathbb{G}_m^d(\overline{\Q})$, then
\[
    \left| \mathrm{h}(\mathcal{P}(\omega_\ell)) - \frac{2\zeta(3)}{3\zeta(2)} \right|\ll \delta(\omega_\ell)^{-1/(2^{61}\times 5^5)}
\]
as $\ell\to\infty$, which proves \Cref{main application}.

\begin{rmk}
    Given a strict sequence $(\omega_\ell)_{\ell\geq 1}$ of non-trivial torsion points in a proper algebraic subgroup of $\mathbb{G}_m^2$, we can also consider the limit of $\mathrm{h}(\mathcal{P}(\omega_\ell))$ as $\ell\to\infty$. We refer to \cite[Section 7]{RM} for more details. In this case, the difference between the discrete sum and the integral can be estimated by $\delta(\omega_\ell)^{-1/2^7}$.
\end{rmk}

\newpage

\appendix

\section{Explicit values in the Galois equidistribution theorem}\label{Appendix estimate log}

In the statement of \Cref{main thm}, a constant $\kappa>0$ is involved as the power of $\delta(\omega)^{-1}$. As we mentioned in \Cref{rmk: kappa}, an explicit value of $\kappa$ can be computed and this value is completely determined by two integers $d,k$, where $d$ is the number of variables of $P$ and $k$ is the number of non-zero terms in $P$. This appendix contains an algorithm that helps us to compute $\kappa(d,k)$.

In this appendix, we denote torsion points by $\zeta$ instead of $\omega$ to be consistent with \cite{DimitrovHabegger}. First, let us recall the main theorem of \cite{DimitrovHabegger}, which is a corollary of \cite[Theorem 8.8]{DimitrovHabegger}.

\begin{thm}\label{DH main thm}
    Let $d,k$ be integers, and let $P\in\overline{\Q}[T_1^{\pm 1},\ldots,T_d^{\pm 1}]\backslash\{0\}$ be essentially atoral with at most $k$ nonzero terms. Then there exists a constant $\gamma=\gamma(d,k)>0$ such that the following holds.

    Given any torsion point $\zeta\in\mathbb{G}_m^d$ suppose that the strictness degree $\delta(\zeta)$ is large enough. Then~ $P(\zeta^\sigma)\neq 0$ for all $\sigma\in\operatorname{Gal}(\Q(\zeta)/\Q)$, and as $\delta(\zeta)\to\infty$ we have
    \[
    \left| \left(\frac{1}{[\Q(\zeta):\Q]}\sum_{\sigma\in\operatorname{Gal}(\Q(\zeta)/\Q)}\operatorname{log}|P(\zeta^\sigma)|\right)-m(P)\right|\ll_P \delta(\zeta)^{-\gamma}.
    \]
\end{thm}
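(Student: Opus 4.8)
The plan is to deduce \Cref{DH main thm} from the general result of Dimitrov and Habegger, \cite[Theorem 8.8]{DimitrovHabegger}; in fact \Cref{DH main thm} is a reformulation of \cite[Theorem 1.1]{DimitrovHabegger}, and the reason to record the argument here rather than merely cite it is that we want to keep explicit the way the exponent $\gamma=\gamma(d,k)>0$ is extracted, which is precisely what \Cref{algorithmkappa} below makes quantitative.

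First I would perform two harmless normalizations. Since $\zeta$ is a torsion point, every coordinate of $\zeta^\sigma$ is a root of unity, hence $|(\zeta^\sigma)^a|=1$ for all $a\in\Z^d$ and all $\sigma$; therefore multiplying $P$ by a Laurent monomial changes neither $\tfrac{1}{[\Q(\zeta):\Q]}\sum_\sigma\log|P(\zeta^\sigma)|$ nor $m(P)$, so we may assume that $P\in\overline{\Q}[T_1,\ldots,T_d]$ is an honest polynomial with at most $k$ terms. Next, writing $P=c\,Q$ with $c$ the coefficient of largest maximum norm and $Q$ of coefficient-vector norm $1$, we have $\log|P(\zeta^\sigma)|=\log|c|+\log|Q(\zeta^\sigma)|$ and $m(P)=\log|c|+m(Q)$, so the $\log|c|$ terms cancel in the difference and it suffices to treat $Q$; in particular the constant $\gamma$ will depend on $P$ only through $d$ and $k$.

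For the non-vanishing assertion, when $\delta(\zeta)$ is large in terms of $P$, Laurent's theorem on torsion points of subvarieties of $\mathbb{G}_m^d$ \cite{Laurent} shows $P(\zeta^\sigma)\neq 0$ for every $\sigma$: a torsion point on the hypersurface $Z(P)$ lies on one of its finitely many maximal torsion cosets, and since $P$ is essentially atoral (\Cref{essentially atoral}) each of these is proper, hence contained in some $H_a=\{x\in\mathbb{G}_m^d:x^a=1\}$ with $a\in\Z^d\backslash\{0\}$ of bounded maximum norm; as $H_a$ is defined over $\Q$, from $\zeta^\sigma\in H_a$ we deduce $\zeta\in H_a$, i.e.\ $\zeta^a=1$, which contradicts $\delta(\zeta)>|a|$. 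Granting this, one applies \cite[Theorem 8.8]{DimitrovHabegger} to the orbit of $\zeta$ under the full group $\operatorname{Gal}(\Q(\zeta)/\Q)$, which yields the bound $\ll_P\delta(\zeta)^{-\gamma}$ for the displayed difference; internally this rests on the discrepancy estimate \Cref{discrepancy over Galois orbit} for Galois orbits of roots of unity together with Dimitrov--Habegger's analysis of $\log|P|$ near its zero locus.

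The genuine difficulty here is bookkeeping rather than mathematics: one must trace through \cite{DimitrovHabegger} how the implied constant and, more importantly, the exponent $\gamma$ depend on $d$ and $k$ at each step of the argument, since it is exactly this dependence that \Cref{algorithmkappa} unwinds into a closed form. No new ingredient is needed beyond that tracking.
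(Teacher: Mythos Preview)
Your proposal is correct and matches the paper's treatment: the paper does not prove \Cref{DH main thm} independently but simply records it as the main theorem of \cite{DimitrovHabegger} (a corollary of \cite[Theorem 8.8]{DimitrovHabegger}), and your added normalizations and Laurent-theorem non-vanishing argument are exactly how the paper handles these points in \Cref{section proof}. One cosmetic remark: the properness of the maximal torsion cosets in $Z(P)$ follows already from $P\neq 0$ rather than from essential atorality, so that parenthetical appeal to \Cref{essentially atoral} is unnecessary, though harmless.
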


For $d=1$, by \cite[Proposition 4.5]{DimitrovHabegger}, we have
\[
    \left| \left(\frac{1}{[\Q(\zeta):\Q]}\sum_{\sigma\in\operatorname{Gal}(\Q(\zeta)/\Q)}\operatorname{log}|P(\zeta^\sigma)|\right)-m(P)\right|\ll_P\frac{(\operatorname{log}\delta(\zeta))^3 \tau(\delta(\zeta))}{\delta(\zeta)},
\]
where $\tau(\delta(\zeta))$ is the number of divisors of the positive integer $\delta(\zeta)$. Thus one can choose $\gamma(1,k)=1-\epsilon$ for any $\epsilon>0$. For $d\geq 2$, the proof of \cite[Theorem 8.8]{DimitrovHabegger} provides a way to find an explicit value of $\gamma=\gamma(d,k)$. Also, in \Cref{DH main thm}, we note that the convergence speed of the equidistribution theorem can be estimated for sufficiently large $\delta(\zeta)$. It is worth to point out that the proof of \cite[Theorem 8.8]{DimitrovHabegger} also allows us to explicitly determine how large $\delta(\zeta)$ should be for certain specific Laurent polynomials $P$. This estimation of $\delta(\zeta)$ includes a constant $C=C(d,k)\geq 1$. We summarize their methods to determine $\gamma$ and $C$ in \Cref{algorithmkappa}. Afterward, we can compute the exact value of $\gamma=\gamma(d,k)$ in \Cref{main thm} using \Cref{rmk: kappa} and \Cref{algorithmkappa}.

Before explaining why \Cref{algorithmkappa} works, we specify the type of Laurent polynomials for which we can determine the necessary size of $\delta(\zeta)$.

\begin{defn}\label{c admissible}
    Let $P\in\overline{\Q}[T_1^{\pm 1},\ldots,T_d^{\pm d}]$ be a non-zero Laurent polynomial, $\zeta\in\mathbb{G}_m^d$ be a torsion point and $c\geq 1$. Then we say $(P,\zeta)$ is $c$-admissible if the following holds.

    Given any $A\in\operatorname{GL}_d(\Z)$ and an integer $\ell$ with $0\leq\ell\leq d-1$ suppose that
    \begin{itemize}
        \item $\zeta^A=(\eta,\xi)$ for $\eta\in\mathbb{G}_m^\ell$ and $\xi\in\mathbb{G}_m^{d-\ell}$,
        \item $P_{A,\eta}(T_1,\ldots,T_{d-\ell})\coloneqq Q(\eta,T_1,\ldots,T_{d-\ell})\in\overline{\Q}[T_1,\ldots,T_{d-\ell}]$, where
        \[
        Q(T_1,\ldots,T_d)\coloneqq P\big( (T_1,\ldots,T_d)^{A^{-1}} \big)\cdot R(T_1,\ldots,T_d)
        \]
        with $P((T_1,\ldots,T_d)^{A^{-1}})$ a Laurent polynomial and $R(T_1,\ldots,T_d)$ a monomial such that $Q$ is a polynomial coprime to $T_1\cdots T_d$. 
        \item Under the convention $\operatorname{inf}\varnothing=\infty$, 
        \[
        \rho(u)\coloneqq\operatorname{inf}\left\{ |v|:v\in\Z^{d-\ell}\backslash\{0\},\ \langle u,v\rangle =0 \right\},\quad\text{for } u\in\Z^{d-\ell},
        \]
        and
        \begin{align*}
            B(P_{A,\eta})\coloneqq\operatorname{inf}\big\{ B\in\N_{>0}:&\text{ let }\omega\in\mathbb{G}_m^{d-\ell}\text{ be a torsion point, }z\in S^1\text{ be a non-torsion point}\\
            &\text{ and }u\in\Z^{d-\ell}.\text{ If }P_{A,\eta}(\omega\cdot z^u)=0,\text{ then }\rho(u)\leq B \big\}.
        \end{align*}
    \end{itemize}
    Then we have $P_{A,\eta}\neq 0$ and $B(P_{A,\eta})\leq c|A^{-1}|$.
\end{defn}

We refer the reader to \cite[Section 6]{DimitrovHabegger} for more details regarding the value $B(P_{A,\eta})$. When $B(P_{A,\eta})$ is smaller, we can obtain better properties of $P_{A,\eta}$, and consequently, of $P$. One of the conditions of \Cref{DH main thm} is that $P$ is an essentially atoral Laurent polynomial. By \cite[Lemma 8.1]{DimitrovHabegger}, in this case, there exists $c\geq 1$ such that $(P,\zeta)$ is $c$-admissible for all torsion points $\zeta\in\mathbb{G}_m^d$ with $\delta(\zeta)\geq c$. 
The following proposition is derived by \cite[Theorem 8.8]{DimitrovHabegger}.

\begin{prop}\label{the size of delta zeta}
    Let $P\in\overline{\Q}[T_1,\ldots,T_d]\backslash\{0\}$ be a polynomial with at most $k$ nonzero terms for an integer $k\geq 2$. Then there exists a constant $C=C(d,k)$ such that the following holds.
    
    Let $\zeta\in\mathbb{G}_m^d$ be a torsion point such that $P(\zeta^\sigma)\neq 0$ for all $\sigma\in\operatorname{Gal}(\Q(\zeta)/\Q)$. If $(P,\zeta^\sigma)$ is $c$-admissible for all $\sigma\in\operatorname{Gal}(\Q(\zeta)/\Q)$, then the estimation of the convergence speed of the equidistribution in \Cref{DH main thm} holds for
    \[
    \delta(\zeta)\geq C\cdot\operatorname{max}\{c,\operatorname{deg}P\}^C.
    \]
    Moreover, the constant $C$ can be computed by \Cref{algorithmkappa}.
\end{prop}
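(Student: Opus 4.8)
The plan is to revisit the proof of \cite[Theorem 8.8]{DimitrovHabegger} and make every implicit constant effective, recording its dependence on $c$ and $\deg P$. That proof runs by induction on the dimension $d$. The base case $d=1$ is already quantitative: \cite[Proposition 4.5]{DimitrovHabegger} bounds the error by $\ll_P(\operatorname{log}\delta(\zeta))^3\tau(\delta(\zeta))/\delta(\zeta)$, valid once $\delta(\zeta)$ exceeds an explicit polynomial in $\deg P$, so one may take $C(1,k)$ and $\gamma(1,k)$ explicitly; these are the base values fed to \Cref{algorithmkappa}.

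For the inductive step $d\geq 2$, the $c$-admissibility hypothesis on $(P,\zeta^\sigma)$ is precisely what makes the argument of \cite[Section 8]{DimitrovHabegger} effective: it replaces the non-constructive constant supplied by \cite[Lemma 8.1]{DimitrovHabegger} (whose proof passes through a Noetherianity argument on the atoral locus) by the explicit inequality $B(P_{A,\eta})\leq c|A^{-1}|$ of \Cref{c admissible}. Granting this, I would follow that proof through its three ingredients. First, a change of coordinates by some $V\in\operatorname{GL}_d(\Z)$ with $|V|$ and $|V^{-1}|$ bounded explicitly in terms of $d$, $k$ and $\deg P$, as produced by \Cref{lem: construction of Vt'} together with the bounds \eqref{maximum norm_V} and \eqref{ineq:V inverse}; in these coordinates the specialised polynomials $P_{A,\eta}$ have degree controlled by \eqref{ineq:deg P eta}, and the relevant sub-torus coordinate has strictness degree bounded below by a controlled fraction of $\delta(\zeta)$, as in \eqref{ineq:delta eta}. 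Second, one splits the Galois orbit into the points whose image lies in a low-complexity subtorus $H_a$ --- handled by the inductive hypothesis in dimension $\leq d-1$ applied to the restrictions $P_{A,\eta}$, which are again admissible with a parameter controlled in terms of $c$ and $|A^{-1}|$ --- and the complementary generic part. Third, for the generic part one combines the discrepancy estimate \Cref{discrepancy over Galois orbit}, a Koksma-type comparison over the fibres, and the small-ball volume bounds quoted as \cite[Lemma A.3]{DimitrovHabegger} and \cite[Lemma A.4]{DimitrovHabegger} to control the contribution near the zero set of $P$. At each of these steps the threshold imposed on $\delta(\zeta)$ is the maximum of finitely many quantities, each an explicit polynomial in $\max\{c,\deg P\}$ and in the dimension-$(d-1)$ constants; collecting them yields a bound of the shape $C\cdot\max\{c,\deg P\}^{C}$ with $C=C(d,k)$, together with an explicit $\gamma(d,k)$.

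This would produce a recursion of the form $C(d,k)=F\big(C(d-1,k'),\gamma(d-1,k'),d,k\big)$ for an explicit function $F$, where $k'\leq k$ counts the terms surviving the specialisation of $P$ along $\eta$, seeded by the base values from \cite[Proposition 4.5]{DimitrovHabegger}; and similarly for $\gamma(d,k)$. \Cref{algorithmkappa} is then nothing but the implementation of this recursion, which proves the final assertion; the output $\gamma(d,k)$ is returned to \Cref{rmk: kappa} to obtain the explicit value of $\kappa$ in \Cref{main thm}.

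I expect the main obstacle to be the bookkeeping in the second ingredient. When $P$ is specialised along $\zeta^A=(\eta,\xi)$ and the $(d-1)$-dimensional statement is applied to $P_{A,\eta}$, one must verify simultaneously that $(P_{A,\eta},\xi^\sigma)$ is $c'$-admissible with $c'$ controlled in terms of $c$ and $|A^{-1}|$ --- this is where the definition of $B(P_{A,\eta})$ and the analysis of \cite[Section 6]{DimitrovHabegger} enter --- and that the strictness degree of $\xi$ is large enough for the inductive threshold to be met, using the lower bound on the sub-torus strictness degree above. Carrying both comparisons through all $d$ levels while keeping the constants polynomial at each step is the delicate point; everything else is a long but routine propagation of explicit constants through \cite[Sections 6--8]{DimitrovHabegger}.
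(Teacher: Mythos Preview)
Your meta-strategy is correct and matches the paper: one re-reads the proof of \cite[Theorem~8.8]{DimitrovHabegger} and makes every threshold on $\delta(\zeta)$ explicit, with the $c$-admissibility hypothesis replacing the non-effective constant from \cite[Lemma~8.1]{DimitrovHabegger}. The paper carries this out by isolating the matrix bounds \eqref{maximum norm_V}--\eqref{ineq:delta eta}, then listing fourteen numbered inequalities (\ref{ineqs:1})--(\ref{ineqs:14}) on the auxiliary parameters $\epsilon,v_1,\ldots,v_d$, and finally simplifying them into the recursion encoded by \Cref{algorithmkappa}.

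However, your description of the inductive structure is off in two concrete places, and these would derail the recursion you write down. First, after the change of variables $\zeta^V=(\eta,\xi)$ with $\eta\in\mathbb{G}_m^\ell$ and $\xi\in\mathbb{G}_m^{d-\ell}$, the induction hypothesis is \emph{not} applied to $P_{V,\eta}$ (the specialisation in the $\xi$-variables): that polynomial is handled directly by \cite[Proposition~6.2]{DimitrovHabegger}. The induction is applied instead to an auxiliary polynomial $\widetilde{Q}\in\overline{\Q}[T_1,\ldots,T_\ell]$ in the $\eta$-variables, constructed in \cite[Section~7.2]{DimitrovHabegger} so as to relate $m(P_{V,\eta})$ to $m(P)$. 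So the roles of the two factors are reversed relative to what you wrote, and $\ell$ ranges over $\{0,1,\ldots,d-1\}$ rather than being fixed at $d-1$.

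Second, and more damaging for your recursion, the number of nonzero terms does not decrease under specialisation: the polynomial $\widetilde{Q}$ has at most $k^2$ nonzero terms, not $k'\leq k$. This is why the paper's recursion reads $C(d,k)$ in terms of $C(\ell,k^2)$ and $\gamma(\ell,k^2)$ for $1\leq\ell\leq d-1$ (see (\ref{ineqs:10})--(\ref{ineqs:12}) and lines 8--14 of \Cref{algorithmkappa}), and why the algorithm is seeded with the base values $C(1,k^{2^j})$, $\gamma(1,k^{2^j})$ for $0\leq j\leq d-1$. Your proposed recursion $C(d,k)=F(C(d-1,k'),\ldots)$ with $k'\leq k$ would give the wrong dependence and would not terminate correctly when unwound.
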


\begin{ex}
    Suppose $P(X,Y)=X-1$. By \Cref{P i are essentially atoral}, the polynomial $P$ is essentially atoral as it is a binomial. We claim that $(P,\zeta)$ is $1$-admissible for $\zeta\neq(1,\xi)$, where $\xi$ is a torsion point in  $\mathbb{G}_m$. Indeed, suppose
    $A=\begin{pmatrix}
        a & b\\
        c & d
    \end{pmatrix}
    \in\operatorname{GL}_2(\Z)$. Then its determinant and inverse are
    \[
    \operatorname{det}A=ad-bc=\pm 1\quad\text{and}\quad
    A^{-1}=\frac{1}{ad-bc}\begin{pmatrix}
        d & -b\\
        -c & a
    \end{pmatrix}.
    \]
    Suppose $\zeta=(e^{\mathrm{i}2\pi x}, e^{\mathrm{i}2\pi y})$ for $(x,y)\in[0,1)^2$. Then $\zeta^A=(e^{\mathrm{i}2\pi (ax+cy)},e^{\mathrm{i}2\pi (bx+dy)})$.

    Let us first consider the case $ad-bc=1$. If we choose $\ell=0$ in \Cref{c admissible}, then
    \[
    P((X,Y)^{A^{-1}})=P(X^dY^{-c},X^{-b}Y^a)=X^dY^{-c}-1\quad\text{and}\quad P_{A,\eta}(X,Y)=X^d-Y^c.
    \]
    Since $ad-bc=1$, we have $d\neq 0$ or $c\neq 0$ and thus $P_{A,\eta}\neq 0$. Let us compute $B(P_{A,\eta})$. Let~ $\omega=(\omega_1,\omega_2)\in\mathbb{G}_m^2$ be a torsion point, $z\in S^1$ be a non-torsion point and $u=(u_1,u_2)\in\Z^2\backslash\{0\}$. Then $z^u=(z^{u_1},z^{u_2})$ and
    \[
    P_{A,\eta}(\omega z^u)=P(\omega_1z^{u_1},\omega_2z^{u_2})=\omega_1^dz^{u_1d}-\omega_2^cz^{u_2c}.
    \]
    If $P_{A,\eta}(\omega z^u)=0$, then $z^{u_2c-u_1d}=\omega_1^d\omega_2^{-c}$. As $\omega$ is a torsion point, there exists a positive integer~ $n$ such that $z^{n(u_2c-u_1d)}=1$. Since $z$ is not a torsion point, we have $n(u_2c-u_1d)=0$ and so
    \[
    u_2c-u_1d=0.
    \]
    Hence, if $d=0$ or $u_2=0$, then $u=(u_1,0)$ and so $\rho(u)=|(0,1)|=1$. Otherwise $d\neq 0$ and~ $u_2\neq 0$. Since $ad-bc=1$, we have $\operatorname{gcd}(c,d)=1$. For each $v=(v_1,v_2)\in\Z^2\backslash\{0\}$ with $\langle u,v\rangle=0$, we get
    \[
    v_2=-\frac{u_1}{u_2}v_1=-\frac{c}{d}v_1.
    \]
    Thus, $v_1=nd$ and $v_2=-nc$ for some $n\in\Z\backslash\{0\}$. It follows that $\rho(u)=\operatorname{max}\{|c|,|d|\}$ and so~ $B(P_{A,\eta})=\operatorname{max}\{1,|c|,|d|\}\leq |A^{-1}|$.

    We now turn to the case $\ell=1$ in \Cref{c admissible}. In this case, $\eta=e^{\mathrm{i}2\pi (ax+cy)}$ and
    \[
    P_{A,\eta}(X)=e^{\mathrm{i}2\pi d(ax+cy)}-X^c.
    \]
    If $P_{A,\eta}=0$, then $c=0$ and $e^{\mathrm{i}2\pi d(ax+cy)}=1$, which means $da x\in\Z$. Since $ad-bc=1$, then $ad=1$ and thus $x\in\Z$. Therefore, $\zeta=(1,\xi)$ for a torsion point $\xi\in\mathbb{G}_m$ if $P_{A,\eta}=0$. Now we exclude the case that $P_{A,\eta}=0$. Let us compute $B(P_{A,\eta})$. Let $\omega\in\mathbb{G}_m$ be a torsion point, $z\in S^1$ be a non-torsion point and $u\in\Z$ such that $P_{A,\eta}(\omega z^u)=0$. Then
    \[
    z^{uc}=\omega^{-c}e^{\mathrm{i}2\pi d(ax+cy)}.
    \]
    Similarly to the case where $\ell=0$, we obtain $uc$=0. If $c=0$, then $P_{A,\eta}$ is a constant. When~ $P_{A,\eta}\neq 0$, we have $B(P_{A,\eta})=1$. Otherwise, $u=0$, and thus $\rho(u)=1$. Therefore, $B(P_{A,\eta})=1$. The case $ad-bc=-1$ is essentially the same. Hence, the claim is proved.

    By similar arguments, one can show that
    \begin{itemize}
        \item the polynomial $P(X,Y)=Y-1$ is essentially atoral and $(P,\zeta)$ is $1$-admissible for $\zeta\neq(\xi,1)$, where $\xi\in\mathbb{G}_m$ is a torsion point,
        \item the Laurent polynomial $P(X,Y)=\frac{X}{Y}-1$ is essentially atoral and $(P,\zeta)$ is $2$-admissible for $\zeta\neq(\xi,\xi)$, where $\xi\in\mathbb{G}_m$ is a torsion point.
    \end{itemize}
\end{ex}

We will now explain why \Cref{algorithmkappa} is effective.

In the proof of \cite[Theorem 8.8]{DimitrovHabegger}, several estimates are involved that ignore constants with respect to $d$. We make all the necessary estimates explicit through the following observations. Recall that for $A=(a_{ij})\in\operatorname{GL}_d(\Z)$, we set $|A|=\operatorname{max}_{1\leq i,j\leq d}|a_{ij}|$. Then for any two $A,B\in\operatorname{GL}_d(\Z)$, we have $|AB|\leq d|A||B|$.

\begin{lem}\label{maximum norm_inverse of A}
    Let $A\in\operatorname{GL}_d(\Z)$. Then $|A^{-1}|\leq d^{2d-2}|A|^{d-1}$.
\end{lem}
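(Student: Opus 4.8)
The plan is to use Cramer's rule. Since $A\in\operatorname{GL}_d(\Z)$, its determinant is a unit in $\Z$, so $\det A=\pm 1$, and therefore $A^{-1}=(\det A)^{-1}\operatorname{adj}(A)=\pm\operatorname{adj}(A)$. The $(i,j)$-entry of $\operatorname{adj}(A)$ is $(-1)^{i+j}$ times the minor $M_{ji}$ obtained from $A$ by deleting the $j$-th row and the $i$-th column; this minor is the determinant of a $(d-1)\times(d-1)$ submatrix of $A$, all of whose entries have absolute value at most $|A|$. So every entry of $A^{-1}$ equals, up to sign, such a minor, and it suffices to bound $|M_{ji}|$.

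First I would dispose of the case $d=1$: then $A=(\pm 1)$, $A^{-1}=(\pm 1)$, and the claim reads $1\le d^{2d-2}|A|^{d-1}=1$, which holds. For $d\ge 2$, expanding $M_{ji}$ by the Leibniz formula over the symmetric group $S_{d-1}$ writes it as a sum of $(d-1)!$ products of $d-1$ entries of $A$, hence $|M_{ji}|\le (d-1)!\,|A|^{d-1}$. Since $(d-1)!\le (d-1)^{d-1}\le d^{2d-2}$, we conclude $|A^{-1}|=\max_{i,j}|M_{ji}|\le d^{2d-2}|A|^{d-1}$, as desired.

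There is essentially no obstacle here: the argument is elementary and in fact yields the sharper constant $(d-1)!$ in place of $d^{2d-2}$, which is then weakened only to match the form of the bound recorded in \cite{inverseA}, the shape that is convenient for the subsequent estimates in this appendix. (If one preferred to avoid the adjugate, one could argue by induction on $d$ via cofactor expansion along a row, but the adjugate formulation is the most direct route.)
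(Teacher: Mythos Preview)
Your proof is correct and considerably simpler than the paper's. You use Cramer's rule: since $\det A=\pm1$, each entry of $A^{-1}$ is, up to sign, a $(d-1)\times(d-1)$ minor of $A$, and the Leibniz expansion bounds any such minor by $(d-1)!\,|A|^{d-1}\le d^{2d-2}|A|^{d-1}$. The paper instead invokes the Cayley--Hamilton expression for $A^{-1}$ from \cite{inverseA}, writing $A^{-1}$ as a polynomial in $A$ with coefficients built from traces of powers of $A$, and then separately bounds $|A^s|\le d^{s-1}|A|^s$, $|\operatorname{tr}(A^\ell)|\le d^\ell|A|^\ell$, and the number of index tuples by $d^{d-1}$; the exponent $2d-2$ emerges from combining these. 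Your adjugate argument is more elementary, requires no external reference, and actually delivers the sharper constant $(d-1)!$, which you then relax to $d^{2d-2}$ only to match the stated form. The paper's route offers no real advantage here; either bound suffices for the crude estimates that follow in the appendix. One minor inaccuracy in your write-up: the paper cites \cite{inverseA} for the Cayley--Hamilton formula itself, not for the final inequality, so your phrase ``the bound recorded in \cite{inverseA}'' slightly misattributes what that reference provides.
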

\begin{proof}
    One can express $A^{-1}$ in terms of $\operatorname{det}A$, the powers of $A$ and their traces through the Cayley-Hamilton method, see for example \cite[Appendix B]{inverseA},
    \[
    A^{-1}=\frac{1}{\operatorname{det}(A)}\sum_{s=0}^{d-1}A^s\sum_{k_1,\ldots,k_{d-1}}\prod_{\ell=1}^{d-1}\frac{(-1)^{k_\ell+1}}{\ell^{k_\ell}k_\ell!}\left(\operatorname{tr}(A^\ell)\right)^{k_\ell},
    \]
    where $\operatorname{tr}(A^\ell)$ is the trace of $A^\ell$, the sum is taken over $s$ and all integers $k_\ell\geq 0$ satisfying the equation $s+\sum_{\ell=1}^{d-1}\ell k_\ell=d-1$. Hence,
    \[
    |A^{-1}|\leq\sum_{s=0}^{d-1}|A^s|\left|\sum_{k_1,\ldots,k_{d-1}}\prod_{\ell=1}^{d-1}\frac{(-1)^{k_\ell+1}}{\ell^{k_\ell}k_\ell!}\left(\operatorname{tr}(A^\ell)\right)^{k_\ell}\right|\leq\sum_{s=0}^{d-1} d^{s-1}|A|^s\left|\sum_{k_1,\ldots,k_{d-1}}\prod_{\ell=1}^{d-1}\frac{(-1)^{k_\ell+1}}{\ell^{k_\ell}k_\ell!}\left(\operatorname{tr}(A^\ell)\right)^{k_\ell}\right|.
    \]
    Note that
    \[
    \left|\operatorname{tr}(A^\ell)\right|\leq d|A^\ell|\leq d^\ell|A|^\ell.
    \]
    Since $(k_1,\ldots,k_{d-1})$ has at most $d^{d-1}$ possible choices, we have
    \[
    \left|\sum_{k_1,\ldots,k_{d-1}}\prod_{\ell=1}^{d-1}\frac{(-1)^{k_\ell+1}}{\ell^{k_\ell}k_\ell!}\left(\operatorname{tr}(A^\ell)\right)^{k_\ell}\right|\leq d^{d-1}\prod_{\ell=1}^{d-1}\left| \operatorname{tr}(A^\ell) \right|^{k_\ell}=d^{d-1}(d|A|)^{\sum_{\ell=1}^{d-1}\ell k_\ell}=d^{2d-s-2}|A|^{d-1-s}.
    \]
    Therefore,
    \[
    |A^{-1}|\leq\sum_{s=0}^{d-1} d^{2d-3}|A|^{d-1}\leq d^{2d-2}|A|^{d-1}.
    \]
\end{proof}

Suppose $k\geq 2$. The main way to prove \cite[Theorem 8.8]{DimitrovHabegger} is by induction on $d$. First of all, the univariate case was proven in \cite[Section 4]{DimitrovHabegger} by a repulsion property of the unit circle. For $d\geq 2$, a case with more restrictions on $P$ was shown in \cite[Proposition 6.2]{DimitrovHabegger}. 

In order to use the induction hypothesis, in \cite[Lemma 8.7]{DimitrovHabegger}, Dimitrov and Habegger chose a parameter $\epsilon\in(0,\frac{1}{2}]$, found an integer $\ell\in\{0,1,\ldots,d-1\}$ and a matrix $V\in\operatorname{GL}_d(\Z)$, considered the point $\zeta^V=(\eta,\xi)$ with $\eta\in\mathbb{G}_m^\ell$ and $\xi\in\mathbb{G}_m^{d-\ell}$, and constructed a polynomial~ $P_{V,\eta}\in\overline{\Q}[T_1,\ldots,T_{d-\ell}]$ with less variables satisfying the conditions in \cite[Proposition 6.2]{DimitrovHabegger}. The discrete sum in \Cref{DH main thm} with respect to $(P,\zeta)$ is given by the discrete sum of $(P_{V,\eta},\xi)$ under some Galois actions, see \cite[(8.8)]{DimitrovHabegger}. They then constructed a polynomial $\widetilde{Q}\in\overline{\Q}[T_1,\ldots,T_\ell]$ that is related to $P(X^{V^{-1}})$ and, consequently, associated with $P_{V,\eta}$. This construction was discussed in \cite[Section 7.2]{DimitrovHabegger}. The key point is that in this way we can associate $m(P_{V,\eta})$ with $m(P(X^{V^{-1}}))$ and thus with $m(P)$. Finally, we can apply the induction hypothesis to $(\widetilde{Q},\eta)$.

The construction of $P_{V,\eta}$ depends on $V$ and $\eta$, and some properties of $P_{V,\eta}$ in the proof of \cite[Theorem 8.8]{DimitrovHabegger} are implied by the upper bound of $|V^{-1}|$. Therefore, we explain how to obtain an explicit estimate of $|V^{-1}|$ following the proof of \cite[Lemma 8.7]{DimitrovHabegger}. We denote the identity matrix in $GL_d(\Z)$ by $I$. In that proof, the matrix $V$ is constructed inductively by a sequence of matrices $V_0\coloneqq I,V_1,\ldots,V_{\ell-1},V\coloneqq V_\ell\in\operatorname{GL}_d(\Z)$ and each $V_t$ is obtained by $V_{t-1}$ and a matrix $V_t'\in\operatorname{GL}_{d+1-t}(\Z)$ with
\[
|V_t|\leq d|V_{t-1}||V_t'|.
\]
The construction of the matrix $V_t'$ using the following lemma.

\begin{lem}\label{lem: construction of Vt'}
    Let $a\in\Z^d\backslash\{0\}$ be primitive. Then there exists a matrix $A\in\operatorname{GL}_d(\Z)$ whose first column is $a$ and $|A|\leq 2^{ \operatorname{max}\{0,d-2\} }|a|$.
\end{lem}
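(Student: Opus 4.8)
The plan is to argue by induction on $d$. The base case $d=1$ is trivial: a primitive $a\in\Z$ is $\pm 1$, and $A=(a)$ works, with $|A|=1=|a|$. For the inductive step, assume the statement in dimension $d-1$ (with $d\ge 2$), let $a=(a_1,\dots,a_d)\in\Z^d$ be primitive, and set $g:=\gcd(a_1,\dots,a_{d-1})$. If $g=0$, then $a=a_d e_d$ with $a_d=\pm 1$, and one takes for $A$ the signed permutation matrix whose first column is $a$ and whose remaining columns are $e_1,\dots,e_{d-1}$; this satisfies $|A|=1\le 2^{\max\{0,d-2\}}|a|$. Otherwise $g\ge 1$ and $a':=g^{-1}(a_1,\dots,a_{d-1})$ is a primitive vector in $\Z^{d-1}$, so the inductive hypothesis supplies $B=[\,a'\mid\beta_2\mid\cdots\mid\beta_{d-1}\,]\in\operatorname{GL}_{d-1}(\Z)$ with first column $a'$ and $|B|\le 2^{\max\{0,d-3\}}|a'|$.

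Since $a$ is primitive and $g$ divides $a_1,\dots,a_{d-1}$, we have $\gcd(g,a_d)=1$, so I would fix Bézout coefficients $\mu,\nu\in\Z$ with $\mu g+\nu a_d=1$, choosing $\nu$ of least absolute value modulo $g$ so that $|\nu|\le g/2$ (this forces $\nu=0,\ \mu=1$ when $g=1$); then $|\mu|=|1-\nu a_d|/g\le 1/g+|a_d|/2\le|a_d|$ when $g\ge 2$ (note $a_d\neq 0$ in that case). I then take $A$ to be the $d\times d$ integer matrix whose first column is $a=(ga',a_d)$, whose columns $2,\dots,d-1$ are $(\beta_2,0),\dots,(\beta_{d-1},0)\in\Z^d$ (the columns of $B$ with a zero appended), and whose last column is $(-\nu a',\mu)\in\Z^d$. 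Its first column is $a$ by construction, and a cofactor expansion along the last row — using $\det[\beta_2\mid\cdots\mid\beta_{d-1}\mid a']=\pm\det B$ (a cyclic permutation of the columns of $B$) and $\det[ga'\mid\beta_2\mid\cdots\mid\beta_{d-1}]=g\det B$ — collapses to $\det A=\pm(\mu g+\nu a_d)\det B=\pm 1$, so $A\in\operatorname{GL}_d(\Z)$.

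It then remains to bound $|A|$ column by column. The first column has entries $a_1,\dots,a_d$, each of absolute value $\le|a|$. The columns inherited from $B$ have entries bounded by $|B|\le 2^{\max\{0,d-3\}}|a'|$, and since every $a_i/g$ $(i<d)$ is an integer, $|a'|=\max_{i<d}|a_i|/g\le|a|/g$; thus this bound is $\le 2^{\max\{0,d-3\}}|a|$ when $g=1$ and $\le 2^{\max\{0,d-3\}}|a|/2$ when $g\ge 2$, and in either case $\le 2^{\max\{0,d-2\}}|a|$. The last column has entries $-\nu a_i/g$ $(i<d)$ of absolute value $|\nu|\,|a_i|/g\le (g/2)(|a|/g)=|a|/2$ when $g\ge 2$ (and $0$ when $g=1$), together with $\mu$, of absolute value $\le|a_d|\le|a|$. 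Hence $|A|\le 2^{\max\{0,d-2\}}|a|$, which closes the induction.

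The one delicate point is the choice of the last column: one needs a column whose presence makes $\det A$ a unit while all of its own entries remain of size $O(|a|)$, and the identity $2^{\max\{0,d-2\}}$ must have enough slack to absorb the recursion. Factoring the head $(a_1,\dots,a_{d-1})$ as $g$ times the primitive vector $a'$ and placing $-\nu a'$ — rather than a multiple of $a$ itself — in that column is exactly what balances $|\nu|\le g/2$ against $|a'|\le|a|/g$ to keep its entries at size $\le|a|/2$, while $\gcd(g,a_d)=1$ is precisely what makes the Bézout relation $\mu g+\nu a_d=1$ available so that the determinant comes out to $\pm 1$. Everything else is bookkeeping; the only computation of substance is tracking the signs in the cofactor expansion along the last row.
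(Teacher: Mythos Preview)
Your proof is correct and follows a genuinely different route from the paper's. The paper quotes a basis-completion theorem from Cassels' \textit{Geometry of Numbers}: starting from the sublattice generated by $a,e_2,\ldots,e_d$, it extracts a basis $a_1=a,a_2,\ldots,a_d$ of $\Z^d$ related to $a,e_2,\ldots,e_d$ by an upper-triangular change of coordinates with controlled entries, and then bounds $|a_k|\le|a_1|+\cdots+|a_{k-1}|\le 2^{k-2}|a|$ by a direct recursion. Your argument instead inducts on $d$: you peel off the last coordinate, factor the head $(a_1,\ldots,a_{d-1})=g\cdot a'$ with $a'$ primitive in $\Z^{d-1}$, apply the inductive hypothesis to $a'$ to get $B\in\operatorname{GL}_{d-1}(\Z)$, and then adjoin a B\'ezout-designed last column $(-\nu a',\mu)$ so that the determinant collapses to $\pm(\mu g+\nu a_d)\det B=\pm 1$. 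Your approach is more self-contained---it needs only B\'ezout and a cofactor expansion, no external lattice results---and the balancing trick $|\nu|\le g/2$ against $|a'|\le|a|/g$ is what keeps the new column at size $\le|a|$, so the factor $2^{\max\{0,d-2\}}$ is carried entirely by the recursive middle block. The paper's argument is shorter once Cassels is invoked and makes the triangular structure of the completed basis explicit; yours gives an explicit recursive construction and in fact shows that the columns other than the first satisfy the slightly better bound $2^{\max\{0,d-3\}}|a|$.
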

\begin{proof}
    Let $e_1,e_2,\ldots,e_d$ be the standard basis of $\Z^d$. Without loss of generality, we suppose that the first coordinate of $a$ is nonzero. Then $a,e_2,\ldots,e_d$ is linearly independent and a basis of a sublattice of $\Z^d$. By \cite[Theorem I.B]{GeoOfNumbersJWS} and \cite[Corollary 1]{GeoOfNumbersJWS}, we can find a basis~ $a_1,\ldots,a_d$ of $\Z^d$ such that
    \begin{align*}
        a&=\alpha_{11}a_1,\\
        e_2&=\alpha_{21}a_1+\alpha_{22}a_2,\\
        \cdots\\
        e_d&=\alpha_{d1}a_1+\alpha_{d2}a_2+\cdots+\alpha_{dd}a_d,
    \end{align*}
    where $\alpha_{ij}$ are integers for all $i,j$; $\alpha_{ii}>0$ and $0\leq\alpha_{ij}<\alpha_{ii}$ for all $i,j$ with $j<i$. Since $a$ is primitive, we have $|a_1|=|a|$. Moreover,
    \[
    |a_2|=\left| \frac{1}{\alpha_{22}}e_2 -\frac{\alpha_{21}}{\alpha_{22}}a_1 \right|\leq \frac{\alpha_{21}+1}{\alpha_{22}}|a|\leq |a|.
    \]
    We claim that $|a_i|\leq 2^{i-2}|a|$ if $i\geq 2$. Indeed, for each $k$ we have
    \begin{align*}
        |a_k|&=\left| \frac{1}{\alpha_{kk}}e_{k}-\frac{\alpha_{k1}}{\alpha_{kk}}a_1- \frac{\alpha_{k2}}{\alpha_{kk}}a_2-\cdots-\frac{\alpha_{k(k-1)}}{\alpha_{kk}}a_{k-1} \right|\\
        &\leq\frac{1+(\alpha_{kk}-1)|a_1|+(\alpha_{kk}-1)|a_2|+\cdots+(\alpha_{kk}-1)|a_{k-1}|}{\alpha_{kk}}\\
        &\leq|a_1|+|a_2|+\cdots+|a_{k-1}|.
    \end{align*}
    Therefore,
    \[
    |a_i|\leq |a_1|+\cdots+|a_{i-2}|+|a_{i-1}|\leq 2(|a_1|+\cdots+|a_{i-2}|)\leq\cdots\leq 2^{i-2}|a_1|=2^{i-2}|a|.
    \]
    Let $A$ be the matrix whose columns are $a_1=a,\ a_2,\ \ldots,\ a_d$. The lemma is proved.
\end{proof}

Applying \Cref{lem: construction of Vt'}, we get a matrix $V_t'\in\operatorname{GL}_{d+1-t}(\Z)$ whose first column is a primitive vector $v$ with $|v|\leq\delta(\zeta)^{\epsilon^{d-t}}$ given in the proof of \cite[Lemma 8.7]{DimitrovHabegger} such that
\[
|V_t'|\leq 2^{d-1}|v|\leq 2^{d-1}\delta(\zeta)^{\epsilon^{d-t}}
\]
and thus
\[
|V_t|\leq 2^{d-1} d|V_{t-1}|\delta(\zeta)^{\epsilon^{d-t}}
\]
for every $0\leq t\leq \ell$. Therefore,
\begin{equation}\label{maximum norm_V}
    |V|=|V_\ell|\leq d^\ell 2^{(d-1)\ell} \delta(\zeta)^{\epsilon^{d-1}+\epsilon^{d-2}+\cdots+\epsilon^{d-\ell}}\leq d^{d \ell} \delta(\zeta)^{2\epsilon^{d-\ell}}.
\end{equation}
By \Cref{maximum norm_inverse of A}, we have
\begin{equation}\label{ineq:V inverse}
    |V^{-1}|\leq d^{2d-2}|V|^{d-1}\leq d^{2d-2+(d-1)d\ell}\delta(\zeta)^{2\epsilon^{d-\ell}(d-1)}.
\end{equation}

In addition, an explicit estimate of $\operatorname{deg}P_{V,\eta}$ is required. The definition of $P_{V,\eta}$ is given in \Cref{c admissible} and one can verify that if $P\in\overline{\Q}[T_1,\ldots,T_d]$, then
\begin{equation}\label{ineq:deg P eta}
    \operatorname{deg}P_{V,\eta}\leq 2d|V^{-1}|\operatorname{deg}P\leq 2d^{2d-1+(d-1)d\ell}\delta(\zeta)^{2\epsilon^{d-\ell}(d-1)}\operatorname{deg}P\leq d^{2d+(d-1)d\ell}\delta(\zeta)^{2\epsilon^{d-\ell}(d-1)}\operatorname{deg}P.
\end{equation}

To apply the induction hypothesis to $(\widetilde{Q},\eta)$, we need to know how large $\delta(\eta)$ is. By the definition of strictness degree, there exists an $a\in\Z^\ell\backslash\{0\}$ with $|a|=\delta(\eta)$ such that $\eta^a=1$. Therefore,
\[
\zeta^{V\cdot(a,0)^T}=(\eta,\xi)^{(a,0)}=\eta^a=1
\]
and so
\[
\delta(\zeta)\leq\left| V \begin{bmatrix}
    a\\
    0
\end{bmatrix} \right|\leq d|V||a|= d|V|\delta(\eta).
\]
If $\epsilon^{d-\ell}\leq\frac{1}{4}$, then by (\ref{maximum norm_V}),
\begin{equation}\label{ineq:delta eta}
    \delta(\eta)\geq\frac{\delta(\zeta)}{d|V|}\geq\frac{\delta(\zeta)^{1-2\epsilon^{d-\ell}}}{d^{d\ell+1}}\geq\frac{\delta(\zeta)^{\frac{1}{2}}}{d^{d\ell+1}}.
\end{equation}

We can now read through the proof of \cite[Theorem 8.8]{DimitrovHabegger} and gather all the required inequalities as follows. The sequence of inequalities presented here corresponds precisely to the order in which they appeared in the proof, and all notations are consistent with those used in the proof, except that here we denote $\gamma$ as $\kappa$.

We take parameters $v_1,\ldots,v_{d-1}\in\left(0,\frac{1}{128 d^2}\right]$, $\epsilon\in\left(0,\frac{1}{2}\right]$ and $v_d=\frac{1}{128d^2}$ satisfying the following inequalities.

\begin{enumerate}
    \item In the case $\ell=0$, we need to choose $\epsilon$ and $v_1$ such that
    \[
    \delta(\zeta)^{\operatorname{min}\{\epsilon^{d-1},v_1^d/2\}}>d^{\frac{1}{2}}\operatorname{max}\{c,\operatorname{deg}P\}.
    \]
    By the condition in \cite[Theorem 8.8]{DimitrovHabegger}, we have
    \[
    \delta(\zeta)^{\operatorname{min}\{\epsilon^{d-1},v_1^d/2\}}\geq C^{\operatorname{min}\{\epsilon^{d-1},v_1^d/2\}}\operatorname{max}\{c,\operatorname{deg}P\}^{C\operatorname{min}\{\epsilon^{d-1},v_1^d/2\}}.
    \]
    Therefore, we can require that
    \begin{equation}\label{ineqs:1}
        C^{\operatorname{min}\{\epsilon^{d-1},v_1^d/2\}}>d^{\frac{1}{2}}\quad\text{and}\quad C\cdot\operatorname{min}\left\{\epsilon^{d-1},\frac{v_1^d}{2}\right\}>1.
    \end{equation}

    \item In the case $\ell=0$ in the proof, by applying \cite[Proposition 6.2]{DimitrovHabegger} we get
    \begin{equation}\label{ineqs:2}
        \gamma\leq\operatorname{min}\left\{\frac{v_1^d}{20d},\frac{\epsilon^{d-1}}{16(k-1)},\frac{v_1^d}{32(k-1)}\right\}.
    \end{equation}

    \item For all $\ell\leq d-2$,
    \begin{equation}\label{ineqs:3}
        \epsilon\leq\frac{v_{\ell+1}^d}{4}.
    \end{equation}

    \item For all $\ell\leq d-2$,
    \begin{equation}\label{ineqs:4}
        \epsilon^{d-\ell-1}-2\epsilon^{d-\ell}d\geq\frac{\epsilon^{d-\ell-1}}{2}.
    \end{equation}

    \item Consider Step 1 and the case $d-\ell\geq 2$ in the proof. By the inequality (\ref{ineq:V inverse}), we have
    \[
    B(P_{V,\eta})\leq c d^{2d-1+(d-1)d\ell}\delta(\zeta)^{2\epsilon^{d-\ell}(d-1)}\leq c d^{2d+(d-1)d\ell}\delta(\zeta)^{2\epsilon^{d-\ell}d}.
    \]
    We need to verify that
    \[
    \widetilde{\lambda}(\xi;v_{\ell+1})>(d-\ell)^{\frac{1}{2}}\operatorname{max}\left\{ B(P_{V,\eta}),\operatorname{deg}P_{V,\eta} \right\}.
    \]
    It is shown in the proof that $\widetilde{\lambda}(\xi;v_{\ell+1})\geq\delta(\zeta)^{\epsilon^{d-\ell-1}}$. By the inequality (\ref{ineq:deg P eta}), it is sufficient to verify that
    \[
    \delta(\zeta)^{\epsilon^{d-\ell-1}}\geq d^{\frac{1}{2}}d^{2d+(d-1)d\ell}\delta(\zeta)^{2\epsilon^{d-\ell}d}\operatorname{max}\{c,\operatorname{deg}P\}.
    \]
    By the inequality (\ref{ineqs:4}), it is implied by
    \[
    \delta(\zeta)^{\epsilon^{d-\ell-1}/2}\geq d^{2d+\frac{1}{2}+(d-1)d\ell}\operatorname{max}\{c,\operatorname{deg}P\}.
    \]
    Thus we can require that
    \begin{equation}\label{ineqs:5}
        C^{\epsilon^{d-\ell-1}/2}\geq d^{2d+\frac{1}{2}+(d-1)d\ell}\quad\text{and}\quad C\cdot\frac{\epsilon^{d-\ell-1}}{2}\geq 1.
    \end{equation}

    \item For all $1\leq\ell\leq d-1$,
    \begin{equation}\label{ineqs:6}
        \epsilon^{d-\ell}\leq\frac{1}{4}.
    \end{equation}

    \item For all $1\leq\ell\leq d-1$,
    \begin{equation}\label{ineqs:7}
        5(v_1+v_2+\cdots+v_\ell)+4\epsilon^{d-\ell}d\leq\frac{v_{\ell+1}^d}{80d}.
    \end{equation}

    \item For all $1\leq\ell\leq d-1$,
    \begin{equation}\label{ineqs:8}
        32\epsilon^{d-\ell}d^3\leq\frac{\epsilon^{d-\ell-1}}{32k}.
    \end{equation}

    \item For all $1\leq\ell\leq d-1$,
    \begin{equation}\label{ineqs:9}
        \gamma\leq\operatorname{min}\left\{ \frac{v_{\ell+1}^d}{80},\frac{\epsilon^{d-\ell-1}}{32k} \right\}.
    \end{equation}

    \item For all $1\leq\ell\leq d-1$,
    \begin{equation}\label{ineqs:10}
        1-4\epsilon^{d-\ell}d\cdot C(\ell,k^2)\geq\frac{1}{2}.
    \end{equation}

    \item Consider the part in Step 2 in the proof taking values of $C$. By \cite[Lemma 8.3]{DimitrovHabegger}, we have $\big( Q,(\eta^\sigma,\xi^\sigma) \big)$ is $cd|V^{-1}|$-admissible, and hence $cd^{2d+(d-1)d\ell}\delta(\zeta)^{2\epsilon^{d-\ell}d}$-admissible by (\ref{ineq:V inverse}). Then \cite[Lemma 8.4]{DimitrovHabegger} implies that $(\widetilde{Q},\eta^\sigma)$ is $cd^{2d+1+(d-1)d\ell}\delta(\zeta)^{2\epsilon^{d-\ell}d}$-admissible. By the inequality (\ref{ineq:deg P eta}),
    \[
    \operatorname{deg}Q\leq d^{2d+(d-1)d\ell}\delta(\zeta)^{2\epsilon^{d-\ell}(d-1)}\operatorname{deg}P.
    \]
    Therefore, we can apply \cite[Lemma 7.2]{DimitrovHabegger} to deduce that
    \[
    \operatorname{deg}\widetilde{Q}\leq(\ell+1)\operatorname{deg}Q\leq d^{2d+1+(d-1)d\ell}\delta(\zeta)^{2\epsilon^{d-\ell}d}\operatorname{deg}P.
    \]
    We need to verify that
    \[
    \delta(\eta)\geq C(\ell,k^2)\cdot\operatorname{max}\left\{ cd^{2d+1+(d-1)d\ell}\delta(\zeta)^{2\epsilon^{d-\ell}d},\operatorname{deg}\widetilde{Q} \right\}^{C(\ell,k^2)},
    \]
    which is implied by
    \[
    \delta(\eta)\geq C(\ell,k^2) d^{(2d+1+(d-1)d\ell)C(\ell,k^2)} \delta(\zeta)^{2\epsilon^{d-\ell}d\cdot C(\ell,k^2)}\operatorname{max}\{c,\operatorname{deg}P\}^{C(\ell,k^2)}.
    \]
    By the inequality (\ref{ineq:delta eta}) and (\ref{ineqs:10}), it is sufficient to verify that
    \[
    \delta(\zeta)^{\frac{1}{4}}\geq C(\ell,k^2) d^{(2d+1+(d-1)d\ell)C(\ell,k^2)+d\ell+1}\operatorname{max}\{c,\operatorname{deg}P\}^{C(\ell,k^2)}.
    \]
    Hence, we can require that
    \begin{equation}\label{ineqs:11}
        C^{\frac{1}{4}}\geq C(\ell,k^2) d^{(2d+1+(d-1)d\ell)C(\ell,k^2)+d\ell+1}\quad\text{and}\quad\frac{C}{4}\geq C(\ell,k^2)
    \end{equation}
    for all $1\leq\ell\leq d-1$.

    \item For all $1\leq\ell\leq d-1$,
    \begin{equation}\label{ineqs:12}
        \frac{\gamma(\ell,k^2)}{4}\geq 32\epsilon^{d-\ell}d^3.
    \end{equation}

    \item For all $1\leq\ell\leq d-1$,
    \begin{equation}\label{ineqs:13}
        \frac{1}{96(d+1)k^2}-2\epsilon^{d-\ell}d\geq\frac{1}{100(d+1)k^2}.
    \end{equation}

    \item For all $1\leq\ell\leq d-1$,
    \begin{equation}\label{ineqs:14}
        \gamma\leq\operatorname{min}\left\{ \frac{1}{100(d+1)k^2},\frac{\gamma(\ell,k^2)}{4} \right\}.
    \end{equation}
\end{enumerate}

We will now simplify these inequalities presented above. The following are the simplified inequalities related to $\epsilon$.

\begin{itemize}
    \item The inequality (\ref{ineqs:4}) is equivalent to $\epsilon\leq\frac{1}{4d}$, which is a consequence of (\ref{ineqs:3}). Additionally, the inequality (\ref{ineqs:6}) is also implied by (\ref{ineqs:3}).

    \item The inequality (\ref{ineqs:8}) is equivalent to $\epsilon\leq 1/(2^{10}d^3k)$.

    \item For the inequality (\ref{ineqs:10}), we can require that $\epsilon\leq 1/(2^3d\cdot C(\ell,k^2))$ for all $1\leq\ell\leq d-1$.

    \item For the inequality (\ref{ineqs:12}), we can require that $\epsilon\leq \gamma(\ell,k^2)/(2^7d^3)$ for all $1\leq\ell\leq d-1$.

    \item The inequality (\ref{ineqs:13}) is equivalent to
    \[
    \epsilon^{d-\ell}\leq\frac{1}{2^6\cdot 3 \cdot 5^2 d(d+1)k^2},\quad\forall 1\leq\ell\leq d-1,
    \]
    which simplifies to
    \begin{equation}\label{ineq:choose of epsilon 2}
        \epsilon\leq 1/(2^6\cdot 3 \cdot 5^2 d(d+1)k^2).
    \end{equation}

    \item We will try to find a specific way to choose $v_1,\ldots,v_{d-1}$ that satisfies (\ref{ineqs:7}). If we require that
    \begin{equation}\label{ineq:choose of epsilon}
        \epsilon\leq\frac{v_\ell^d}{4d\cdot 160d},\quad\forall 1\leq\ell\leq d
    \end{equation}
    and $v_1\leq v_2\leq\cdots\leq v_d$,
    then for (\ref{ineqs:7}) it suffices to check that
    \[
    5\ell v_\ell+\frac{ v_{\ell+1}^d v_{\ell+2}^d\cdots v_d^d }{ (4d)^{d-\ell-1}(160d)^{d-\ell} }\leq\frac{v_{\ell+1}^d}{80d}.
    \]
    Thus, we can choose \[
    v_\ell=\frac{v_{\ell+1}^d}{5\ell\cdot 80d}\left( 1-\frac{v_{\ell+2}^d\cdots v_d^d}{ 2(4d\cdot 160d)^{d-\ell-1} } \right),
    \]
    where $\ell$ goes from $d-1$ to $1$. Note that (\ref{ineqs:3}) is implied by (\ref{ineq:choose of epsilon}).
\end{itemize}

In the following, we simplify the inequalities related to $\gamma$.

\begin{itemize}
    \item Combining all these inequalities (\ref{ineqs:2}), (\ref{ineqs:9}) and (\ref{ineqs:14}) for $\gamma$ and $v_1\leq v_2\leq\cdots v_d$, we obtain
    \begin{align*}
        \gamma\leq\operatorname{min}\Big\{ \frac{v_1^d}{20d},\frac{\epsilon^{d-1}}{16(k-1)},\frac{v_1^d}{32(k-1)},\frac{v_2^d}{80},\frac{\epsilon^{d-2}}{32k},\frac{1}{100(d+1)k^2},\frac{\gamma(1,k^2)}{4},\cdots,\frac{\gamma(d-1,k^2)}{4}\Big\}.
    \end{align*}
    Since $k\geq 2$, we have $\frac{\epsilon^{d-1}}{16(k-1)}\leq\frac{\epsilon^{d-2}}{32k}$. From the inequality (\ref{ineq:choose of epsilon}), we also find that $\frac{v_1^d}{20d},\ \frac{v_1^d}{32(k-1)},\ \frac{v_2^d}{80}$ are less than $\frac{\epsilon^{d-1}}{16(k-1)}$. It follows from (\ref{ineqs:12}) that $\frac{\epsilon^{d-1}}{16(k-1)}\leq\frac{\gamma(\ell,k^2)}{4}$ for all $\ell$. By the inequality (\ref{ineq:choose of epsilon 2}), we have $\frac{\epsilon^{d-1}}{16(k-1)}\leq\frac{1}{100(d+1)k^2}$. In summary, it suffices to choose
    \[
    \gamma(d,k)=\frac{\epsilon^{d-1}}{16(k-1)}.
    \]
\end{itemize}

The following are the simplified inequalities related to $C$.

\begin{itemize}
    \item By the inequality (\ref{ineqs:3}), the inequality (\ref{ineqs:1}) becomes
    \[
    C^{\epsilon^{d-1}}>d^{\frac{1}{2}}\quad\text{and}\quad \epsilon^{d-1}C>1.
    \]

    \item To make sure that (\ref{ineqs:5}) holds for all $1\leq\ell\leq d-2$, we can require that
    \[
    C^{\frac{\epsilon^{d-2}}{2}}\geq d^{d^3}\quad\text{and}\quad C\geq\frac{2}{\epsilon^{d-2}}.
    \]

    \item To make sure that (\ref{ineqs:11}) holds, we can require that for all $1\leq\ell\leq d-1$,
    \[
    C^{\frac{1}{4}}\geq C(\ell,k^2)d^{(d^3+2)C(\ell,k^2)}\quad\text{and}\quad C\geq 4\cdot C(\ell,k^2).
    \]
\end{itemize}

We can now summarize all these arguments to present \Cref{algorithmkappa}. 

\begin{rmk}
    This algorithm may be far from optimal; for instance, to satisfy (\ref{ineqs:7}), we choose relatively small values for $\epsilon$ and $v_i$ in (\ref{ineq:choose of epsilon}).

    Additionally, our choice of $\gamma$ depends on the second term in the Vigogradov's notation in \cite[Proposition 6.2(ii)]{DimitrovHabegger}. This term is provided by \cite[Theorem A.1]{DimitrovHabegger}, which states that $m(P(T_1,\ldots,T_d)^u)$ converges to $m(P)$ as $\rho(u)\to\infty$ and gives an upper bound on the rate of this convergence. Specifically, this term represents the convergence speed. The convergence speed is also studied in \cite{Mahler1} and \cite{Mahler2}, where it is shown to be much better than that given in \cite[Theorem A.1]{DimitrovHabegger} for certain polynomials $P$ and vectors $u$. However, a complete description of this convergence speed remains an open question. 
\end{rmk}

\begin{algorithm}
    \caption{Computing $\gamma(d,k)$ and $C(d,k)$}\label{algorithmkappa}
    \begin{algorithmic}[1]
        \Require{ $d\geq 2,\ k\geq 2,$ \\
        $C(1,k)=C(1,k^2)=\cdots=C(1,k^{2^{d-1}})=1,$ \\
        $\gamma(1,k)=\gamma(1,k^2)=\cdots=\gamma(1,k^{2^{d-1}})=1-\epsilon_0$ for an $\epsilon_0>0$.}
        \Ensure{ $\gamma(d,k)$, $C(d,k)$ }
        \For{$n=2$ to $d-1$}
            \For{$m=1$ to $d-n$}
                \State{$v_n\gets\frac{1}{128 n^2},$ $v_{n-1}\gets 1,\ \ldots,\ v_1\gets 1,\ s\gets n-1$}
                \While{ $s\geq 1$ }
                    \State{$v_s\gets\frac{v_{s+1}^n}{5s\cdot 80n}\left( 1-\frac{v_{s+2}^n\cdots v_n^n}{2(4n\cdot 160n)^{n-s-1}} \right)$ }
                    \State{$s\gets s-1$}
                \EndWhile
                \State{$\epsilon_1\gets\operatorname{min}\left\{ \frac{1}{2^3n\cdot C(1,k^{2^{m+1}})},\ldots,\frac{1}{2^3n\cdot C(n-1,k^{2^{m+1}})} \right\}$}
                \State{$\epsilon_2\gets\operatorname{min}\left\{ \frac{\gamma(1,k^{2^{m+1}})}{2^7n^3},\ldots,\frac{\gamma(n-1,k^{2^{m+1}})}{2^7n^3} \right\}$}
                \State{$\epsilon\gets\operatorname{min}\left\{ \frac{v_1^n}{2^6\cdot 5 n^2},\frac{1}{2^{10}\cdot n^3k^{2^m}},\frac{1}{2^7\cdot 3\cdot 5^2 n(n+1)k^{2^{m+1}}},\epsilon_1,\epsilon_2 \right\}$}
                \State{$\gamma(n,k^{2^m}) \gets \frac{\epsilon^{n-1}}{16(k^{2^{m}}-1)}$}
                \State{$C_1\gets\operatorname{min}\left\{ 
C(1,k^{2^{m+1}})^4 n^{4(n^3+2)C(1,k^{2^{m+1}})},\ldots,C(n-1,k^{2^{m+1}})^4 n^{4(n^3+2)C(n-1,k^{2^{m+1}})} \right\}$}
                \State{$C_2\gets\operatorname{min}\left\{ 
4\cdot C(1,k^{2^{m+1}}),\ldots,4\cdot C(n-1,k^{2^{m+1}}) \right\}$}
                \State{$C(n,k^{2^m})\gets\operatorname{min}\left\{ n^{\frac{1}{2\epsilon^{n-1}}}+1, \frac{1}{\epsilon^{n-1}}+1, n^{\frac{2n^3}{\epsilon^{n-2}}},\frac{2}{\epsilon^{n-2},}, C_1,C_2\right\}  $}
            \EndFor
        \EndFor
        \State{ $v_d\gets\frac{1}{128d^2},\ v_{d-1}\gets 1,\ \ldots,\ v_1\gets 1,\ \ell\gets d-1$ }
        \While{$\ell\geq 1$}
            \State{$v_\ell=\frac{v_{\ell+1}^d}{5\ell\cdot 80d}\left( 1-\frac{v_{\ell+2}^d\cdots v_d^d}{2(4d\cdot 160d)^{d-\ell-1}} \right)$}
            \State{$\ell\gets \ell-1$}
        \EndWhile
        \State{$\epsilon\gets\operatorname{min}\left\{ \frac{v_1^d}{2^7\cdot 5 d^2},\frac{1}{2^{10}\cdot d^3k},\frac{1}{2^6\cdot 3\cdot 5^2 d(d+1)k^2},\frac{1}{2^3d\cdot C(1,k^2)},\ldots,\frac{1}{2^3d\cdot C(d-1,k^2)},\frac{\gamma(1,k^2)}{2^7d^3},\ldots,\frac{\gamma(d-1,k^2)}{2^7d^3} \right\}$}
        \State{$\gamma(d,k)\gets \frac{\epsilon^{d-1}}{16(k-1)}$}
        \State{$C_1\gets\operatorname{min}\left\{  C(1,k^2)^4 d^{4(d^3+2)C(1,k^2)},\ldots,C(d-1,k^2)^4 d^{4(d^3+2)C(d-1,k^2)}\right\}$}
        \State{$C_2\gets\operatorname{min}\left\{ 
4\cdot C(1,k^2),\ldots,4\cdot C(d-1,k^2) \right\}$}
        \State{$C(d,k)\gets\operatorname{min}\left\{ d^{\frac{1}{2\epsilon^{d-1}}}+1, \frac{1}{\epsilon^{d-1}}+1, d^{\frac{2d^3}{\epsilon^{d-2}}},\frac{2}{\epsilon^{d-2},},C_1,C_2\right\}$}
    \end{algorithmic}
\end{algorithm}

\newpage
\bibliographystyle{alpha}
\bibliography{ref.bib}

\end{document}